\newtheorem{theorem}{Theorem}[section]
\newtheorem{lemma}[theorem]{Lemma}
\newtheorem{proposition}[theorem]{Proposition}
\theoremstyle{definition}
\newtheorem{remark}[theorem]{Remark}
\numberwithin{equation}{section}
 \theoremstyle{plain}
 \numberwithin{equation}{section} 
 \numberwithin{figure}{section} 
 \theoremstyle{plain}
 \theoremstyle{remark}
 \newtheorem*{acknowledgement*}{Acknowledgement}
\newcommand{\cB}{{\mathcal B}}
\newcommand{\cD}{{\mathcal D}}
\newcommand{\cF}{{\mathcal F}}
\newcommand{\cG}{{\mathcal G}}
\newcommand{\cH}{{\mathcal H}}
\newcommand{\cI}{{\mathcal I}}
\newcommand{\cJ}{{\mathcal J}}
\newcommand{\cL}{{\mathcal L}}
\newcommand{\cP}{{\mathcal P}}
\newcommand{\cW}{{\mathcal W}}
\newcommand{\cX}{{\mathcal X}}
\newcommand{\vt}{{\vartheta}}
\newcommand{\Om}{{\Omega}}
\newcommand{\om}{{\omega}}
\newcommand{\ve}{{\varepsilon}}
\newcommand{\del}{{\delta}}
\newcommand{\gam}{{\gamma}}
\newcommand{\Gam}{{\Gamma}}
\newcommand{\vr}{{\varrho}}
\newcommand{\Sig}{{\Sigma}}
\newcommand{\sig}{{\sigma}}
\newcommand{\al}{{\alpha}}
\newcommand{\be}{{\beta}}
\newcommand{\ka}{{\kappa}}
\newcommand{\vp}{{\varpi}}
\newcommand{\vrho}{{\varrho}}
\newcommand{\vs}{{\varsigma}}
\newcommand{\bbC}{{\mathbb C}}
\newcommand{\bbJ}{{\mathbb J}}
\newcommand{\bbK}{{\mathbb K}}
\newcommand{\bbL}{{\mathbb L}}
\newcommand{\bbR}{{\mathbb R}}
\newcommand{\bbS}{{\mathbb S}}
\newcommand{\bbZ}{{\mathbb Z}}
\newcommand{\bbI}{{\mathbb I}}
\newcommand{\bbW}{{\mathbb W}}
\newcommand{\bbU}{{\mathbb U}}
\newcommand{\bbX}{{\mathbb X}}
\newcommand{\bbV}{{\mathbb V}}
\newcommand{\bfA}{{\bf A}}
\newcommand{\bfB}{{\bf B}}
\newcommand{\bfM}{{\bf M}}
\begin{document}
\title[]{Limit theorems for signatures}%
 \vskip 0.1cm
 \author{ Yuri Kifer\\
\vskip 0.1cm
 Institute  of Mathematics\\
Hebrew University\\
Jerusalem, Israel}%
\address{
Institute of Mathematics, The Hebrew University, Jerusalem 91904, Israel}
\email{ kifer@math.huji.ac.il}%

\thanks{ }
\subjclass[2000]{Primary:  60F15 Secondary: 60L20, 37A50}%
\keywords{rough paths, signatures, strong approximations, Berry-Esseen estimates,
$\al$, $\phi$- and $\psi$-mixing, stationary process, shifts, dynamical systems.}%
\dedicatory{  }
 \date{\today}
\begin{abstract}\noindent
We obtain strong moment invariance principles for normalized multiple iterated sums and integrals
of the form $\bbS_N^{(\nu)}(t)=N^{-\nu/2}\sum_{0\leq k_1<...<k_\nu\leq Nt}\xi(k_1)\otimes\cdots\otimes\xi(k_\nu)$,
$t\in[0,T]$ and $\bbS_N^{(\nu)}(t)=N^{-\nu/2}\int_{0\leq s_1\leq...\leq s_\nu\leq Nt}\xi(s_1)\otimes\cdots\otimes\xi(s_\nu)ds_1\cdots ds_\nu$, where $\{\xi(k)\}_{-\infty<k<\infty}$ and $\{\xi(s)\}_{-\infty<s<\infty}$ are centered stationary vector processes with some weak
dependence properties. We show, in particular, that (in both cases) the distribution of $\bbS^{(\nu)}_N$ is $O(N^{-\del})$-close, $\del>0$ in the Prokhorov and the Wasserstein metrics to the distribution of certain stochastic
processes $\bbW_N^{(\nu)}$ constructed recursively starting from $W_N=\bbW_N^{(1)}$ which is a Brownian motion
with covariances. This is done by constructing a coupling between $\bbS_N^{(1)}$ and $\bbW_N^{(1)}$, estimating
directly the moment variational norm of $\bbS_N^{(\nu)}-\bbW_N^{(\nu)}$ for $\nu=1,2$ and extending  these estimates to  $\nu>2$ relying on arguments borrowed 
from the rough paths theory.
 In the continuous time we work both under direct weak dependence assumptions and also within the suspension setup
which is more appropriate for applications in dynamical systems.
\end{abstract}
\maketitle
\markboth{Yu.Kifer}{Limit theorems for signatures}
\renewcommand{\theequation}{\arabic{section}.\arabic{equation}}
\pagenumbering{arabic}

\section{Introduction}\label{sec1}\setcounter{equation}{0}

Let $\{\xi(k)\}_{-\infty<k<\infty}$ and $\{\xi(t)\}_{-\infty<t<\infty}$ be discrete and continuous time $d$-dimensional
stationary processes such that for $s=0$ (and so for all $s$),
\begin{equation}\label{1.1}
E\xi(s)=0.
\end{equation}
The sequences of multiple iterated sums
\[
\Sig^{(\nu)}(v)=\sum_{0\leq k_1<...<k_\nu< [v]}\xi(k_1)\otimes\cdots\otimes\xi(k_\nu),
\]
in the discrete time, and of multiple iterated integrals
\[
\Sig^{(\nu)}(v)=\int_{0\leq u_1\leq...\leq u_\nu\leq v}\xi(u_1)\otimes\cdots\otimes\xi(u_\nu)du_1\cdots du_\nu,
\]
in the continuous time, were called signatures in recent papers related to the rough paths theory, data sciences and machine
learning (see, for instance, \cite{HL}, \cite{DR}, \cite{DET} and references there). Observe that for $\nu=1$ we have above usual sums and integrals.

In this paper we will study limit theorems for the normalized iterated sums and integrals $\bbS_N^{(\nu)}(t)=
N^{-\nu/2}\Sig^{(\nu)}(Nt),\, t\in[0,T]$. Under certain weak dependence conditions on the process $\xi$ it was shown
in \cite{FK} that there exists a Brownian motion with covariances $\cW$ such that the $\nu$-th term of the, so called,
Lyons' extension $\bbW_N^{(\nu)}$ constructed recursively (see (\ref{rec2}) in Section \ref{sec2})
with the rescaled Brownian motion $W_N(s)=N^{-1/2}\cW(Ns)$ and a certain drift term satisfy
\begin{equation}\label{1.2}
\sup_{0\leq t\leq T}|\bbS_N^{(\nu)}(t)-\bbW^{(\nu)}_N(t)|=O(N^{-\ve})\,\,\,\mbox{almost surely (a.s.)}
\end{equation}
for some $\ve>0$ which does not depend on $N$ and, in fact, this was proved in \cite{FK} for the $p$-variational and not just for the supremum norm. This is the strong (or a.s.) invariance principle for iterated sums and integrals
which of course  implies, in particular, the weak convergence of distributions of $\bbS_N^{(\nu)}$ to the distribution of $\bbW_N^{(\nu)}$ (which does not depend on $N$). The latter does not seem to be stated before in the full generality
though for $\nu=1$ and $\nu=2$ this was proved before for certain classes of processes ( see, for instance,
\cite{CFKMZ} and references there). Of course, convergence in distribution does not provide by itself any rate
 estimates, and so it has mostly theoretical value.

Observe that the eventual almost sure estimates mentioned above do not yield any moment estimates valid for every $N$.
In this paper we show, in particular, that a Brownian motion $\cW$ can be chosen so that
\begin{equation}\label{1.3}
E\sup_{0\leq t\leq T}|\bbS_N^{(\nu)}(t)-\bbW^{(\nu)}_N(t)|^{4M/\nu}=O(N^{-\ve}),\,\nu=1,2,...,4M.
\end{equation}
for some $\ve>0$ which does not depend on $N$ and, again, this will be proved with respect to the variational and
not just for the supremum norm. It is easy to see that (\ref{1.3}) provides for each $N\geq 1$ also estimates in the
Prokhorov and Kantorovich--Rubinstein (or Wasserstein) metrics between the distributions of $\bbS_N^{(\nu)}$
 and $\bbW^{(\nu)}_N$. We stress that these estimates are valid for each $N\geq 1$ while the a.s.
approximation estimate (\ref{1.2}) is valid eventually or, in other words, the constant which can be put in place
of $O$ in (\ref{1.2}) is, in fact, a finite random variable with no hint on its integrability properties
while in (\ref{1.3}) we can replace $O$ by a true constant.
We observe that even for independent sequences $\xi(k)$ our results are new though, of course, the estimates are
easier then.

In the continuous time case we consider two setups. The first one is standard in probability when we impose
mixing and approximation conditions with respect to a family of $\sig$-algebras indexed by two continuous
time parameters on the same
probability space on which our continuous time process $\xi$ is defined. Since this setup does not have many
applications to processes generated by continuous time dynamical systems (flows) we consider also another setup,
called suspension, when mixing and approximation conditions are imposed on a discrete time process defined 
on a base probability space and the continuous time process $\xi$ moves deterministically for the time determined by certain ceiling function and then jumps to the base according to the above discrete time process. This construction
is adapted to applications for certain important classes of dynamical systems, i.e. when $\xi(n)=g\circ F^n$ or
 $\xi(t)=g\circ F^t$ where $F$ is a measure preserving transformation or a continuous time measure preserving flow. Unlike \cite{FK} and several other related papers we work under quite general dependence (mixing) and moment
 conditions and not under specific $\alpha$- or $\phi$-mixing assumptions.
 
We observe that the iterated sums and integrals $\Sig^{(\nu)}$ defined above have a visual resemblance with $U$-statistics having special (degenerate) 
 product type kernels. The kernels of $U$-statistics are supposed to be symmetric (see, for instance  \cite{DK}, \cite{DP}, \cite{KY} and references
 there) or $U$-statistics themselves are written in the symmetric form (see, for instance, \cite{DDP})
 while in our case for $x,y\in\bbR^d$ with $d>1$, in general, $h_{ij}(x,y)=x_iy_j\ne y_ix_j=h_{ij}(x,y)$. Only when $x_1,x_2,...,x_\nu$ are one dimensional
 $h(x_1,...,x_\nu)=x_1x_2\cdots x_\nu$ becomes a symmetric degenerate kernel of an $U$-statistics. Furthermore, in most papers $U$-statistics are one dimensional objects 
 and so are limit theorems for them while we obtain limit theorems for the whole tensor products defined above viewed as $d^\nu$-dimensional objects and such results
 cannot be obtained relying on limit theorems for each of their components separately. In \cite{Bor} weak limit theorems for Banach valued $U$-statistics were obtained
 while almost sure approximation results appear there without details of proofs and only for nondegenerate kernels where the limit is always a Gaussian process.
 Most of the papers on $U$-statistics having degenerated kernels
 deal with weak limit theorems for them (see \cite{BV}, \cite{LN} and references there) with representations of limit processes somewhat different from ours. For $U$-statistics
 with bivariate degenerate one dimensional kernels almost sure approximation type limit theorems were obtained in \cite{DDP} (with independent random variables) and in \cite{KY}
 (with weakly dependent stationary sequences). Our motivation and methods are different from papers on $U$-statistics and we obtain estimates in stronger variational 
 norms. In order to extend our estimates from sums and integrals to multiple iterated sums and integrals we rely on Chen relations (see Theorem \ref{hoelder} and
  Proposition \ref{hoelder2}) which hold true for iterated sums and integrals but not for general $U$-statistics. Consequently, the limit theorems we obtain in this 
  paper are of different types than in the works of $U$-statistics.

 The structure of this paper is the following. In the next section we provide necessary definitions and give
   precise statements of our results. Section \ref{sec3} is devoted to necessary estimates both of general nature
   and more specific to our problem, as well, as characteristic functions approximations needed
    for the strong approximation theorem which concludes that section. In Section \ref{sec4} we obtain
    main estimates for iterated sums with respect to the variational norms while Section \ref{sec5} is devoted to the
    "straightforward" continuous time setup. In Section 6 we deal with the suspension continuous time setup. In Section
    \ref{sec7} we extend the results to multiple iterated sums and integrals employing the rough paths machinery.

The author thanks Peter Friz for useful discussions of topics related to the present paper, especially, concerning the,
so called, Lyons' extension, i.e. extensions of estimates to multiple iterated sums and integrals..

\section{Preliminaries and main results}\label{sec2}\setcounter{equation}{0}
\subsection{Discrete time case}\label{subsec2.1}
We start with the discrete time setup which consists of a complete probability space
$(\Om,\cF,P)$, a stationary sequence of $d$-dimensional centered random vectors $\xi(n)=(\xi_1(n),...,\xi_d(n))$,
 $-\infty<n<\infty$ and
a two parameter family of countably generated $\sig$-algebras
$\cF_{m,n}\subset\cF,\,-\infty\leq m\leq n\leq\infty$ such that
$\cF_{mn}\subset\cF_{m'n'}\subset\cF$ if $m'\leq m\leq n
\leq n'$ where $\cF_{m\infty}=\cup_{n:\, n\geq m}\cF_{mn}$ and $\cF_{-\infty n}=\cup_{m:\, m\leq n}\cF_{mn}$.
It is often convenient to measure the dependence between two sub
$\sig$-algebras $\cG,\cH\subset\cF$ via the quantities
\begin{equation}\label{2.1}
\varpi_{b,a}(\cG,\cH)=\sup\{\| E(g|\cG)-Eg\|_a:\, g\,\,\mbox{is}\,\,
\cH-\mbox{measurable and}\,\,\| g\|_b\leq 1\},
\end{equation}
where the supremum is taken over real functions and $\|\cdot\|_c$ is the
$L^c(\Om,\cF,P)$-norm. Then more familiar $\al,\rho,\phi$ and $\psi$-mixing
(dependence) coefficients can be expressed via the formulas (see \cite{Bra},
Ch. 4 ),
\begin{eqnarray*}
&\al(\cG,\cH)=\frac 14\varpi_{\infty,1}(\cG,\cH),\,\,\rho(\cG,\cH)=\varpi_{2,2}
(\cG,\cH), \phi(\cG,\cH)=\frac 12\varpi_{\infty,\infty}(\cG,\cH)\,\,\,\mbox{and}\\
&\psi(\cG,\cH)=\varpi_{1,\infty}(\cG,\cH).
\end{eqnarray*}
We set also
\begin{equation}\label{2.2}
\varpi_{b,a}(n)=\sup_{k\geq 0}\varpi_{b,a}(\cF_{-\infty,k},\cF_{k+n,\infty})
\end{equation}
and accordingly
\[
\al(n)=\frac{1}{4}\varpi_{\infty,1}(n),\,\rho(n)=\varpi_{2,2}(n),\,
\phi(n)=\frac 12\varpi_{\infty,\infty}(n),\, \psi(n)=\varpi_{1,\infty}(n).
\]
Our setup includes also the  approximation rate
\begin{equation}\label{2.3}
\beta (a,l)=\sup_{k\geq 0}\|\xi(k)-E(\xi(k)|\cF_{k-l,k+l})\|_a,\,\, a\geq 1.
\end{equation}
We will assume that for some $ 1\leq L\leq\infty$, $M$ large enough and $K=\max(2L,8M)$,
\begin{equation}\label{2.4}
E|\xi(0)|^{K}<\infty,\,\,\,\mbox{and}\,\,\,\sum_{k=0}^\infty\sum_{l=k+1}^\infty(\sqrt{\sup_{m\geq l}\be(K,m)}+\varpi_{L,8M}(l))<\infty.
\end{equation}
Observe that
\[
k\sum_{l=k}^\infty\varpi_{L,8M}(l)\leq\sum_{m=1}^k\sum_{l=m}^\infty\varpi_{L,8M}(l)\leq C<\infty,
\]
where $C>0$ does not depend on $k$ by (\ref{2.4}), and so for any $k\geq 1$,
\begin{equation}\label{2.4+}
\varpi_{L,8M}(k)\leq\sum_{l=k}^\infty\varpi_{L,8M}(l)\leq Ck^{-1}.
\end{equation}

In order to formulate our results we have to introduce also the "increments" of multiple iterated sums under consideration
\begin{equation}\label{2.5}
\Sig^{(\nu)}(u,v)=\sum_{[u]\leq k_1<...<k_\nu< [v]}\xi(k_1)\otimes\cdots\otimes\xi(k_\nu),\, 0\leq u\leq v
\end{equation}
which means that $\Sig^{(\nu)}(u,v)=\{\Sig^{i_1,...,i_\nu}(u,v),\, 1\leq i_1,...,i_\nu\leq d\}$ where
in the coordinate-wise form
\[
\Sig^{i_1,...,i_\nu}(u,v)=\sum_{[u]\leq k_1<...<k_\nu< [v]}\xi_{i_1}(k_1)\cdots\xi_{i_\nu}(k_\nu).
\]
We set also for any $0\leq s\leq t\leq T$,
\[
\bbS_N^{(\nu)}(s,t)=N^{-\nu/2}\Sig^{(\nu)}(sN,tN),\,\,\bbS_N(s,t)=\bbS_N^{(2)}(s,t)\quad\mbox{and}\quad S_N(s,t)=\bbS_N^{(1)}(s,t).
\]
When $u=s=0$ we will just write
\[
\Sig^{(\nu)}(v)=\Sig^{(\nu)}(0,v),\,\bbS_N^{(\nu)}(t)=\bbS_N^{(\nu)}(0,t),\,\bbS_N(t)=\bbS_N(0,t)\,\,\mbox{and}\,\,
S_N(t)=S_N(0,t).
\]

Next, introduce also the covariance matrix $\vs=(\vs_{ij})$ defined by
\begin{equation}\label{2.6}
\vs_{ij}=\lim_{k\to\infty}\frac 1k\sum_{m=0}^k\sum_{n=0}^k\vs_{ij}(n-m),\,\,
\mbox{where}\,\, \vs_{ij}(n-m)=E(\xi_i(m)\xi_j(n))
\end{equation}
taking into account that the limit here exist under conditions of our theorem below (see Section \ref{sec3}).
Let $\cW$ be a $d$-dimensional Brownian motion with the covariance matrix $\vs$ (at the time 1) and introduce
the rescaled Brownian motion $\bbW^{(1)}_N(t)=W_N(t)=N^{-1/2}\cW(Nt),\,  t\in[0,T],\, N\geq 1$. We set also
$\bbW_N^{(1)}(s,t)=W_N(s,t)=W_N(t)-W_N(s),\, t\geq s\geq 0$. Next, we introduce
\begin{equation}\label{rec1}
\bbW_N^{(2)}(s,t)=\bbW_N(s,t)=\int_s^tW_N(s,v)\otimes dW_N(v)+(t-s)\Gam
\end{equation}
which can be written in the coordinate-wise form as
\[
\bbW_N^{ij}(s,t)=\int_s^tW_N^i(s,v)dW^j_N(v)+(t-s)\Gam^{ij}\,\,\,\mbox{where}\,\,\,\Gam^{ij}=\sum_{l=1}^\infty
E(\xi_i(0)\xi_j(l))
\]
and the latter series converges absolutely as we will see in Section \ref{sec3}. Again, we set $\bbW_N^{(2)}(t)=
\bbW_N(t)=\bbW_N(0,t)$. For $n>2$ we define recursively,
\begin{equation}\label{rec2}
\bbW_N^{(n)}(s,t)=\int_s^t\bbW_N^{(n-1)}(s,v)\otimes dW_N(v)+\int_s^t\bbW_N^{(n-2)}(s,v)\otimes\Gam dv.
\end{equation}
Both here and the above the stochastic integrals are understood in the It\^ o sense. Coordinate-wise this relation
can be written in the form
\begin{eqnarray*}
&\bbW_N^{i_1,...,i_n}(s,t)=\int_s^t\bbW_N^{i_1,...,i_{n-1}}(s,v)dW^{i_n}_N(v)\\
&+\int_s^t\bbW_N^{i_1,...,i_{n-2}}(s,v)\Gam^{i_{n-1}i_n} dv.
\end{eqnarray*}
As before, we write also $\bbW_N^{(n)}(t)=\bbW_N^{(n)}(0,t)$.

We consider tensor products $(\bbR^d)^{\otimes\nu}=\bbR^d\otimes\cdots\otimes\bbR^d$ of $\bbR^d$ taken $\nu$ times which consists of $\nu d$-dimensional
vectors $X$ with one-dimensional coordinates $\{ X^{i_1,...,i_\nu},\, 1\leq i_k\leq d,\, k=1,2,...,\nu\}$. For each $X\in(\bbR^d)^{\otimes\nu}$ we define the norm
\begin{equation}\label{2.8+}
\| X\|=\sum_{1\leq i_1,...,i_\nu\leq d}|X^{i_1,...,i_\nu}|.
\end{equation}
Let  $\gam(t),\, t\in[0,T]$ be a path on $(\bbR^d)^{\otimes\nu}$, i.e. $\gam$ is a map $\gam:\,[0,T]\to(\bbR^d)^{\otimes\nu}$, and $p\geq 1$ be a number. The
 $p$-variation norm of $\gam$ on  an interval $[U,V]\subset [0, T],\, U<V$ is defined by
 \begin{equation}\label{2.7}
 \|\gam\|_{p,[U,V]}=\big(\sup_\cP\sum_{[s,t]\in\cP}\|\gam(s,t)\|^p\big)^{1/p}
 \end{equation}
 where the supremum is taken over all partitions $\cP=\{ U=t_0<t_1<...<t_n=V\}$ of $[U,V]$ and the sum is taken over
 the corresponding subintervals $[t_i,t_{i+1}],\, i=0,1,...,n-1$ of the partition while $\gam(s,t)$ is taken according
 to the definitions above depending on the process under consideration. Actually, the notation (\ref{2.7}) will be used here for any positive $p$.
  We will prove


\begin{theorem}\label{thm2.1}
Let the conditions of (\ref{2.4}) hold true with integers $L\geq 1$ and $M>\frac p{p-2}$ for some $p\in(2,3)$. Then the
stationary sequence of random vectors $\xi(n),\,-\infty<n<\infty$ can be redefined preserving its distributions on
a sufficiently rich probability space which contains  also a $d$-dimensional Brownian motion $\cW$ with the covariance
matrix $\vs$ (at the time 1) so that for any integer $N\geq 1$ the processes $\bbW_N^{(\nu)},\,
\nu=1,2,...$ constructed as above with the rescaled Brownian motion $W_N(t)=N^{-1/2}\cW(Nt),\,  t\in[0,T]$
satisfy,
 \begin{equation}\label{2.8}
 E\| \bbS^{(\nu)}_N-\bbW^{(\nu)}_N\|^{4M/\nu}_{p/\nu,[0,T]}\leq C(M)N^{-\ve_\nu},\,\,\nu=1,2,...,4M
 \end{equation}
 where the constants $\ve_\nu=\ve_\nu(M)>0$ and $C(M)>0$ do not depend on $N$.
 \end{theorem}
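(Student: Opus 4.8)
The plan is to treat the base levels $\nu=1,2$ directly and then bootstrap to all $\nu\le 4M$ through the continuity of Lyons' extension map. First I would place everything on a sufficiently rich probability space carrying the limiting Brownian motion $\cW$ and invoke the strong coupling of Section \ref{sec3} (built from the Berry--Esseen and characteristic-function approximations) to redefine $\xi(n)$ without changing its law, so that the level-one process $S_N=\bbS_N^{(1)}$ is approximated by $W_N=\bbW_N^{(1)}$ with $E\|S_N-W_N\|_{p,[0,T]}^{4M}\le C(M)N^{-\ve_1}$. The summability hypothesis (\ref{2.4}) together with $E|\xi(0)|^K<\infty$, $K=\max(2L,8M)$, is exactly what makes this coupling quantitative and, via Rosenthal/Burkholder-type inequalities for weakly dependent sequences, simultaneously yields uniform-in-$N$ moment bounds $\sup_N E\|S_N\|_{p,[0,T]}^{q}<\infty$ and $\sup_N E\|\bbS_N\|_{p/2,[0,T]}^{q}<\infty$ for all $q\le 4M$; the matching bounds for $W_N,\bbW_N$ are free, since by Brownian scaling $W_N$ has the law of $\cW$ on $[0,T]$ for every $N$.

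Next I would establish the level-two estimate $E\|\bbS_N-\bbW_N\|_{p/2,[0,T]}^{2M}\le C(M)N^{-\ve_2}$. The conceptual point is the drift $\Gam^{ij}=\sum_{l\ge1}E(\xi_i(0)\xi_j(l))$: the discrete double sum $\bbS_N^{(2)}$ and the It\^o integral $\int W_N\otimes dW_N$ differ by the accumulated expected ``area'' coming from the short-range correlations, and $(t-s)\Gam$ in (\ref{rec1}) is precisely its compensator, the series converging absolutely by (\ref{2.4}). I would rewrite $\bbS_N^{(2)}$ through a discrete Chen/summation-by-parts identity in terms of the level-one increments, insert the coupling $S_N\approx W_N$, and control the $p/2$-variation of the remaining area discrepancy by a sewing/Young estimate, bounding the correlation sums through the summability in (\ref{2.4}) and the approximation rate $\be(a,l)$ rather than any named mixing rate.

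For $\nu>2$ the mechanism is that, since $p\in(2,3)$, both $(W_N,\bbW_N)$ and $(S_N,\bbS_N)$ are rough paths determined by their first two levels; the recursion (\ref{rec2}) is exactly Lyons' extension applied to $(W_N,\bbW_N)$, and an analogous discrete extension produces $\bbS_N^{(\nu)}$. I would invoke the quantitative local-Lipschitz form of the extension theorem,
\[
\|\bbS_N^{(\nu)}-\bbW_N^{(\nu)}\|_{p/\nu,[0,T]}\le C_\nu\,P_\nu\big(\|S_N\|_p,\|\bbS_N\|_{p/2},\|W_N\|_p,\|\bbW_N\|_{p/2}\big)\big(\|S_N-W_N\|_{p,[0,T]}+\|\bbS_N-\bbW_N\|_{p/2,[0,T]}\big),
\]
with $P_\nu$ a polynomial. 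Taking the $4M/\nu$-th moment and applying H\"older to separate the prefactor (which has $N$-independent moments of all orders up to $4M$ by the first paragraph) from the small difference factor gives a contribution of order $\big(N^{-\min(\ve_1,\ve_2)}\big)^{1/\nu}$, so (\ref{2.8}) holds with $\ve_\nu=\min(\ve_1,\ve_2)/\nu>0$. This also explains the degradation of both the variation index and the moment exponent by the factor $1/\nu$ and the need for the large total moment $4M$.

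The main obstacle is two-fold and lies at level two. Conceptually one must correctly identify $\Gam$ as the compensator for the discrete-to-continuous area defect; technically one must bound the $p/2$-variation of this area error in moment using only the abstract summable weak-dependence condition (\ref{2.4}) and the rate $\be(a,l)$, without the convenience of independence or a specific $\al$- or $\phi$-mixing coefficient. Once the level-two estimate is in hand, the passage to $\nu>2$ is by contrast a fairly mechanical combination of the deterministic extension Lipschitz bound with H\"older's inequality, made legitimate by the uniform-in-$N$ moment bounds on the rough-path norms established at the outset.
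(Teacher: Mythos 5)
Your treatment of $\nu=1,2$ is essentially the paper's: a Dehling--Philipp type strong approximation built from the block characteristic-function estimates gives the sup-norm coupling (Proposition \ref{prop3.7}), the compensator $\Gam^{ij}=\sum_{l\geq 1}E(\xi_i(0)\xi_j(l))$ is correctly identified, and the variational upgrade by splitting partitions at scale $N^{-(1-\al)}$ and invoking Kolmogorov--Chentsov type bounds (Theorem \ref{hoelder}, Proposition \ref{hoelder2}) is exactly what Section \ref{sec4} does. The genuine gap is your step from $\nu=2$ to $\nu>2$, which you call ``fairly mechanical.'' The inequality you invoke --- $\|\bbS_N^{(\nu)}-\bbW_N^{(\nu)}\|_{p/\nu,[0,T]}$ bounded by a polynomial of the \emph{global} rough-path norms times the \emph{global} distances $\|S_N-W_N\|_{p,[0,T]}+\|\bbS_N-\bbW_N\|_{p/2,[0,T]}$ --- is not the Lyons continuity theorem and is not available in this form. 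The continuity theorem (Theorem 2.2.2 in \cite{Lyo}, Theorem 3.1.3 in \cite{LQ}) requires the hypothesis $\|\bbS_N^{(i)}(s,t)-\bbW_N^{(i)}(s,t)\|\leq\ve\,\phi(s,t)^{i/p}/(\be(i/p)!)$ for $i=1,2$ on \emph{every} subinterval $[s,t]$, relative to a single control $\phi$ that simultaneously dominates both paths. The moment estimates proved at levels one and two control only $\|S_N-W_N\|_{p,[0,T]}$ and $\|\bbS_N-\bbW_N\|_{p/2,[0,T]}$, and a globally small difference does not yield smallness relative to $\phi_N(s,t)$ on short intervals (on intervals of length $\sim 1/N$ the control can be tiny while the difference is not correspondingly small). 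This is precisely the obstruction spelled out in Remark \ref{rem7.2}, and it is why the paper does not argue as you propose.

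There is a second, independent problem specific to Theorem \ref{thm2.1}: in discrete time the paths $\bbS_N^{(\nu)}$ are piecewise constant, so the extension and continuity theorems of \cite{Lyo}, \cite{LQ}, which are stated for continuous multiplicative functionals, do not apply directly; smoothing by linear interpolation fails because for $\nu\geq 4$ one has $p/\nu<1$ and interpolated paths have infinite $p/\nu$-variation. The paper circumvents both difficulties by proving Proposition \ref{prop7.1} (an extension bound replacing continuity by the vanishing condition (\ref{7.4}), which $\bbS_N^{(\nu)}$ satisfies since $\bbS_N^{(\nu)}(s,t)=0$ when $t-s<1/N$, $\nu\geq 2$), and then running a bespoke induction on $\nu$: a two-scale partition of $[0,T]$ with fine mesh $\approx N^{-\al}$, the Chen relations (\ref{7.14})--(\ref{7.15}) splitting the difference into $\cI_\cD^{(1)},\cI_\cD^{(2)},\cI_\cD^{(3)}$, the first two controlled via (\ref{7.7}), (\ref{7.12}) and Theorem \ref{hoelder}, and the cross term by the induction hypothesis at cost of a factor $N^{4M\al/p}$, with $\al$ chosen level-dependently. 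This also shows your predicted rate $\ve_\nu=\min(\ve_1,\ve_2)/\nu$ is too optimistic: the paper's induction yields exponents of the form $\ka_\nu=\del_4\big(\tfrac{p-2}{2p}\big)^\nu\tfrac{(\nu-1)!}{n^\nu}$, degrading much faster because each inductive step pays the partition-cardinality factor. In short, the higher-level step is the engineered core of the proof, not a citation to a black-box Lipschitz estimate, and as written your argument for $\nu>2$ does not close.
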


Note that if we replace $M$ in (\ref{2.8}) by any $\tilde M$ between 1 and $M$ then by the H\" older inequality (\ref{2.8})
will still remain true with the right hand side $(C(M)N^{-\ve}\nu)^{\tilde M/M}$, and so, in fact, it suffices to prove
(\ref{2.8}) only for all $M$ large enough. In order to understand our assumptions observe that $\varpi_{b,a}$
is clearly non-increasing in $b$ and non-decreasing in $a$. Hence, for any pair $a,b\geq 1$,
\[
\varpi_{b,a}(n)\leq\psi(n).
\]
Furthermore, by the real version of the Riesz--Thorin interpolation
theorem or the Riesz convexity theorem (see \cite{Ga}, Section 9.3
and \cite{DS}, Section VI.10.11) whenever $\theta\in[0,1],\, 1\leq
a_0,a_1,b_0,b_1\leq\infty$ and
\[
\frac 1a=\frac {1-\theta}{a_0}+\frac \theta{a_1},\,\,\frac 1b=\frac
{1-\theta}{b_0}+\frac \theta{b_1}
\]
then
\begin{equation}\label{2.9}
\varpi_{b,a}(n)\le 2(\varpi_{b_0,a_0}(n))^{1-\theta}
(\varpi_{b_1,a_1}(n))^\theta.
\end{equation}
In particular,  using the obvious bound $\varpi_{b_1,a_1}(n)\leq 2$
valid for any $b_1\geq a_1$ we obtain from (\ref{2.9}) for pairs
$(\infty,1)$, $(2,2)$ and $(\infty,\infty)$ that for all $b\geq a\geq 1$,
\begin{eqnarray}\label{2.10}
&\varpi_{b,a}(n)\le 4(2\alpha(n))^{\frac{1}{a}-\frac{1}{b}},\\
&\varpi_{b,a}(n)\le 2^{1+\frac 1a-\frac 1b}(\rho(n))^{1-\frac 1a+\frac 1b}\mbox{and}\,\,\varpi_{b,a}(n)\le 2^{1+\frac 1a}(\phi(n))^{1-\frac 1a}.
\nonumber\end{eqnarray}
We observe also that by the H\" older inequality for $b\geq a\geq 1$
and $\alpha\in(0,a/b)$,
\begin{equation}\label{2.11}
\beta(b,l)\le 2^{1-\alpha}  [\beta(a,l)]^\alpha \|\xi(0)\|^{1-\al}_{\frac{ab(1-\al)}
{a-b\al}}.
\end{equation}
 Thus, we can formulate assumption (\ref{2.4}) in terms of more familiar $\alpha,\,\rho,\,\phi,$
and $\psi$--mixing coefficients and with various moment conditions. It follows also from (\ref{2.9})
 that if $\varpi_{b,a}(n)\to 0$ as $n\to\infty$ for some $b>a\geq 1$ then
\begin{equation}\label{2.12}
\varpi_{b,a}(n)\to 0\,\,\mbox{as}\,\, n\to\infty\,\,\mbox{for all}\,\, b> a\geq 1,
\end{equation}
and so (\ref{2.12}) follows from (\ref{2.4}).

Observe that the estimate (\ref{2.8}) in the $p$-variation norm is stronger than an estimate just in the
supremum norm. In order to prove Theorem \ref{thm2.1} we will first derive directly (\ref{2.8}) for $\nu=1$
and $\nu=2$ relying, in particular, on the strong approximation theorem. Since the latter result did not seem
to appear before under our moment and mixing conditions we will provide the details which cannot be found in the
earlier literature. Observe that our mixing assumptions in (\ref{2.4}) together with the inequality (\ref{2.10})
 allow to obtain the strong approximation theorem under more general conditions than the ones appeared before.
Having (\ref{2.8}) for $\nu=1,2$ we will employ the machinery which comes from the rough paths theory and which allows
to extend (\ref{2.8}) directly from $\nu=1,2$ to $\nu\geq 3$.

Recall, that the Prokhorov distance $\pi$ between two probability measures $\mu$ and $\tilde\mu$ on a
metric space $\cX$ with a distance function $d$ is defined by
\begin{eqnarray*}
&\pi(\mu,\tilde\mu)=\inf\{\ka>0:\,\mu(U)\leq\tilde\mu(U^\ka)+\ka\,\,\mbox{and}\,\,\tilde\mu(U)\leq\mu(U^\ka)+\ka\\
&\mbox{for any Borel set $U$ on $\cX$}\}
\end{eqnarray*}
where $U^\ka=\{ x\in\cX:\, d(x,y)<\ka\,\,$ for some $y\in U\subset\cX\}$ is the $\ka$-neighborhood
of $U$.  Recall also that the $L^q$ Wasserstein (or Kantorovich--Rubinstein) distance between
 two probability measures $\mu$ and $\tilde\mu$ on $\cX$ is defined by
 \[
 w_q(\mu,\tilde\mu)=\inf\{ (Ed^q(Q,R))^{1/q}:\,\cL(Q)=\mu\,\,\mbox{and}\,\, \cL(R)=\tilde\mu\}
 \]
 where the infimum is taken over all random points (variables) $Q$ and $R$ in $\cX$ with their distributions
  $\cL(Q)$ and $\cL(R)$ equal $\mu$ and $\tilde\mu$ respectively. From Theorem \ref{thm2.1} we obtain immediately that
 \begin{equation}\label{2.13}
 w_{4M/\nu}(\cL(\bbS_N^{(\nu)}),\,\cL(\bbW_N^{(\nu)}))\leq C^{\nu/4M}(M)N^{-\frac {\nu\ve_\nu}{4M}}.
 \end{equation}
 It is known (see, for instance, Theorem 2 in \cite{GS}) that $(\pi(\mu,\tilde\mu))^2\leq w_1(\mu,\tilde\mu)\leq w_{4M/\nu}(\mu,\tilde\mu)$,
 and so we obtain also an estimate of the Prokhorov distance between the distributions in (\ref{2.13}) taking
 the square root in the right hand side of (\ref{2.13}). Taking into account the inequality
 \begin{eqnarray*}
 &|P\{ X\leq x\}-P\{ Y\leq x\}|\leq P\{|X-Y|>\del\}+P\{|Y-x|\leq\del\}\leq\del^{-2}E|X-Y|^2\\
 &+P\{|Y-x|\leq\del\},
 \end{eqnarray*}
 valid for any pair of random variables $X,Y$ and a number $x$, we obtain from (\ref{2.8}) also Berry--Esseen type estimates
  choosing $X=\bbS_N^{(\nu)}(T)$, $Y=\bbW_N^{(\nu)}(T)$ and $\del=N^{-\frac 13\ve_\nu}$.
   Now observe that $\cW$ and $W_n(t)=n^{-1/2}\cW(nt),\, t\in[0,T]$ have the same distribution since both processes
 are Brownian motions (with the same covariation matrix). It is not difficult to see relying on the inductive construction
 of $\bbW_1^{(\nu)}$ and $\bbW^{(\nu)}_n$ in (\ref{rec2}) that for each $\nu\geq 1$ the latter pair of processes have 
 the same distribution, as well (see Section 8 in \cite{Ki25}). It follows that (\ref{2.13}) remains true if we replace
  there $\bbW_N^{(\nu)}$ by $\bbW_1^{(\nu)}$. In this form (\ref{2.13}) becomes an extension of the central limit theorem
  from usual sums to iterated sums (and iterated integrals described below) with speed convergence estimates.

  Important classes of processes satisfying our conditions come from
dynamical systems. Let $F$ be a $C^2$ Axiom A diffeomorphism (in
particular, Anosov) in a neighborhood $\Om$ of an attractor or let $F$ be
an expanding $C^2$ endomorphism of a compact Riemannian manifold $\Om$ (see
\cite{Bow}), $g$ be either a H\" older continuous vector function or a
vector function which is constant on elements of a Markov partition and let $\xi(n)=
\xi(n,\om)=g(F^n\om)$. Here the probability space is $(\Om,\cB,P)$ where $P$ is a Gibbs
 invariant measure corresponding to some H\"older continuous function and $\cB$ is the Borel $\sig$-field.
  Let $\zeta$ be a finite Markov partition for $F$ then we can take $\cF_{kl}$
 to be the finite $\sig$-algebra generated by the partition $\cap_{i=k}^lF^i\zeta$.
 In fact, we can take here not only H\" older continuous $g$'s but also indicators
of sets from $\cF_{kl}$. The conditions of Theorems \ref{thm2.1} allow all such functions
since the dependence of H\" older continuous functions on $m$-tails, i.e. on events measurable
with respect to $\cF_{-\infty,-m}$ or $\cF_{m,\infty}$, decays exponentially fast in $m$ and
the condition (\ref{2.4}) is much weaker than that. A related class of dynamical systems
corresponds to $F$ being a topologically mixing subshift of finite type which means that $F$
is the left shift on a subspace $\Om$ of the space of one (or two) sided
sequences $\om=(\om_i,\, i\geq 0), \om_i=1,...,l_0$ such that $\om\in\Om$
if $\pi_{\om_i\om_{i+1}}=1$ for all $i\geq 0$ where $\Pi=(\pi_{ij})$
is an $l_0\times l_0$ matrix with $0$ and $1$ entries and such that $\Pi^n$
for some $n$ is a matrix with positive entries. Again, we have to take in this
case $g$ to be a H\" older continuous bounded function on the sequence space above,
 $P$ to be a Gibbs invariant measure corresponding to some H\" older continuous function and to define
$\cF_{kl}$ as the finite $\sig$-algebra generated by cylinder sets
with fixed coordinates having numbers from $k$ to $l$. The
exponentially fast $\psi$-mixing is well known in the above cases (see \cite{Bow}) and this property
is much stronger than what we assume in (\ref{2.4}). Among other
dynamical systems with exponentially fast $\psi$-mixing we can mention also the Gauss map
$Fx=\{1/x\}$ (where $\{\cdot\}$ denotes the fractional part) of the
unit interval with respect to the Gauss measure $G$ and more general transformations generated
by $f$-expansions (see \cite{Hei}). Gibbs-Markov maps which are known to be exponentially fast
$\phi$-mixing (see, for instance, \cite{MN}) can be also taken as $F$ in Theorem \ref{thm2.1}
with $\xi(n)=g\circ F^n$ as above.

\subsection{Straightforward continuous time setup}\label{subsec2.2}

Our direct continuous time setup consists of a Lebesgue integrable $d$-dimensional stationary process
$\xi(t),\, t\geq 0$ on a probability space $(\Om,\cF,P)$ satisfying (\ref{1.1}) and of a family of
$\sig$-algebras $\cF_{st}\subset\cF,\,-\infty\leq s\leq t\leq\infty$ such
that $\cF_{st}\subset\cF_{s't'}$ if $s'\leq s$ and $t'\geq t$. For all $t\geq 0$ we set
\begin{equation}\label{2.14}
\varpi_{b,a}(t)=\sup_{s\geq 0}\varpi_{b,a}(\cF_{-\infty,s},\cF_{s+t,\infty})
\end{equation}
and
\begin{equation}\label{2.15}
\beta (a,t)=\sup_{s\geq 0}\|\xi(s)-E(\xi(s)|\cF_{s-t,s+t})\|_a.
\end{equation}
where $\varpi_{b,a}(\cG,\cH)$ is defined by (\ref{2.1}).  We continue to
impose the assumption (\ref{2.4}) on the  decay rates  of
$\varpi_{b,a}(t)$ and $\beta (a,t)$. Although they only involve integer
values of $t$, it will  suffice since these are non-increasing functions of $t$.

Next, we introduce the covariance matrix $\vs=(\vs_{ij})$ defined by
\begin{equation}\label{2.16}
\vs_{ij}=\lim_{t\to\infty}\frac 1t\int_{0}^t\int_{0}^t\vs_{ij}(u-v)dudv,\,\,
\mbox{where}\,\, \vs_{ij}(u-v)=E(\xi_i(u)\xi_j(v))
\end{equation}
and the limit here exists under our conditions in the same way as in the discrete time setup.
In order to formulate our results we define the "increments" of multiple iterated integrals
\[
\Sig^{(\nu)}(u,v)=\int_{u\leq u_1\leq...\leq u_\nu\leq v}\xi(u_1)\otimes\cdots\otimes\xi(u_\nu)du_1\cdots du_\nu,\, u\leq v
\]
which coordinate-wise have the form
\[
\Sig^{(\nu)}(u,v)=\{\Sig^{i_1,...,i_\nu}(u,v),\, 1\leq i_1,...,i_\nu\leq d\}
\]
with
\[
\Sig^{i_1,...,i_\nu}(u,v)=\int_{u\leq u_1\leq...\leq u_\nu\leq v}\xi_{i_1}(u_1)\xi_{i_2}(u_2)\cdots\xi_{i_\nu}(u_\nu)du_1\cdots du_\nu
\]
and all integrals here and below are supposed to exist.
We use here the same letter $\Sig$ as in the discrete time case which should not lead to a confusion.
As in the discrete time case we set also
\[
\bbS_N^{(\nu)}(s,t)=N^{-\nu/2}\Sig^{(\nu)}(sN,tN),\,\,\bbS_N^{i_1,...,i_\nu}(s,t)=
N^{-\nu/2}\Sig^{i_1,...,i_\nu}(sN,tN),
\]
$\bbS_N(s,t)=\bbS_N^{(2)}(s,t)$ and $S_N(s,t)=\bbS_N^{(1)}(s,t)$.   When $u=s=0$ we will write
\[
\Sig^{(\nu)}(v)=\Sig^{(\nu)}(0,v)\quad\mbox{and}\quad \bbS_N^{(\nu)}(t)=\bbS_N^{(\nu)}(0,t).
\]
Next, we introduce the matrix
\[
\Gam=(\Gam^{ij}),\,\,\Gam^{ij}=\int_1^\infty du\int_0^1E(\xi_i(v)\xi_j(u))dv+\int_0^1du\int_0^uE(\xi_i(v)\xi_j(u))dv
\]
Then we have
\begin{theorem}\label{thm2.2}
Let the conditions of (\ref{2.4}) hold true with integers $L\geq 1$ and $M>\frac p{p-2}$ for some $p\in(2,3)$ where $\varpi$
and $\be$ are given by (\ref{2.14}) and (\ref{2.15}). Then the vector stationary process
  $\xi(t),\,-\infty<t<\infty$ can be redefined preserving its distributions on a sufficiently rich probability space which
   contains also a $d$-dimensional Brownian motion $\cW$ with the covariance matrix $\vs$ (at the time 1) so that for
  $\bbW_N^{(\nu)}$, constructed as in Theorem \ref{thm2.1} with  the rescaled Brownian motion
 $W_N(t)=N^{-1/2}\cW(Nt),\,  t\in[0,T]$ and the matrix $\Gam$ introduced above, and for any $N\geq 1$ the estimate (\ref{2.8})
 remains true for $\bbS_N^{(\nu)}$ with $\nu=1,2,...,4M$ defined above in the present continuous time setup.
 \end{theorem}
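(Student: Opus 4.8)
\emph{Strategy: reduction to the discrete case.} The plan is to deduce Theorem~\ref{thm2.2} from the already proved discrete-time Theorem~\ref{thm2.1} by passing to the stationary sequence of unit block integrals
\[
\eta(n)=\int_n^{n+1}\xi(u)\,du,\qquad -\infty<n<\infty,
\]
which is centered, $d$-dimensional and $\cF_{n,n+1}$-measurable. First I would check that $\{\eta(n)\}$ inherits the hypotheses (\ref{2.4}). Since $\eta(0),\dots,\eta(k)$ are $\cF_{0,k+1}$-measurable and $\eta(k+n),\eta(k+n+1),\dots$ are $\cF_{k+n,\infty}$-measurable, the discrete dependence coefficients of $\{\eta(n)\}$ are dominated by the continuous ones $\varpi_{b,a}(n-1)$ of (\ref{2.14}); and for $u\in[n,n+1]$ one has $\cF_{u-l,u+l}\subset\cF_{n-l,n+l+1}$, so Minkowski's integral inequality together with the contraction property of conditional expectation gives $\|\eta(n)-E(\eta(n)\,|\,\cF_{n-l,n+l+1})\|_a\le 2\be(a,l)$ with $\be$ the approximation rate (\ref{2.15}), while $E|\eta(0)|^K<\infty$ follows from $E|\xi(0)|^K<\infty$. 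A direct Ces\`aro computation then shows that the long-run covariance of $\{\eta(n)\}$ equals the matrix $\vs$ of (\ref{2.16}). Thus Theorem~\ref{thm2.1} applies to $\{\eta(n)\}$ and provides a rich probability space carrying this sequence together with a Brownian motion $\cW$ of covariance $\vs$, for which (\ref{2.8}) holds for the discrete iterated sums $\bbS_{N,\eta}^{(\nu)}$ of $\{\eta(n)\}$ and the associated $\bbW_N^{(\nu)}$.

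\emph{Levels $\nu=1,2$.} It then remains to compare the continuous signatures $\bbS_N^{(\nu)}$ of $\xi$ with those of $\eta$. For $\nu=1$ one has $S_N(t)=S_{N,\eta}([Nt]/N)+N^{-1/2}\int_{[Nt]}^{Nt}\xi(u)\,du$, so that $S_N-W_N$ differs from the coupling of Theorem~\ref{thm2.1} only by the fractional remainder and the within-cell increments of $S_N$; in $p$-variation these contribute sums of the form $N^{-p/2}\sum_{k<[Nt]}\big(\int_k^{k+1}|\xi(u)|\,du\big)^p$, whose relevant moment is a negative power of $N$ because $p>2$. For $\nu=2$ I would split the simplex $\{0\le u_1\le u_2\le Nt\}$ into off-diagonal blocks, which reproduce exactly the discrete iterated sum $\sum_{k_1<k_2}\eta(k_1)\otimes\eta(k_2)$ of $\eta$ (up to lower-order boundary contributions), and diagonal blocks $\sum_{m<Nt}\int_{m\le u_1\le u_2<m+1}\xi(u_1)\otimes\xi(u_2)\,du_1du_2$. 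Each diagonal block is an identically distributed term; its mean, combined with the discrete drift $\Gam_\eta^{ij}=\sum_{l\ge1}E(\eta_i(0)\eta_j(l))$ supplied by Theorem~\ref{thm2.1}, assembles by a direct computation into the continuous-time matrix $\Gam$ used in (\ref{rec1}), while the remaining mean-zero fluctuation is a step process with increments of order $N^{-1}$, whose $p/2$-variation is again a negative power of $N$ in the mean by a Rosenthal-type moment estimate on the mixing centered diagonal sequence (the gain once more coming from $p>2$). Together with (\ref{2.8}) for $\eta$ at $\nu=1,2$, this yields (\ref{2.8}) for $\bbS_N^{(1)}$ and $\bbS_N^{(2)}$ in the present continuous setup.

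\emph{Levels $\nu\ge3$ and the main difficulty.} The passage from $\nu=1,2$ to $\nu\ge3$ is purely a matter of the rough-paths (Lyons' extension) machinery and is identical to the corresponding step in Theorem~\ref{thm2.1}: the path $t\mapsto\int_0^t\xi(u)\,du$ has bounded variation, so $\bbS_N^{(\nu)}$ is its level-$\nu$ signature and, together with $\bbW_N^{(\nu)}$ defined by the recursion (\ref{rec2}), is determined by the level-one and level-two data; the continuity of this extension in the variational norms propagates (\ref{2.8}) from $\nu=1,2$ to all $\nu\le 4M$, and this deterministic argument does not distinguish discrete from continuous time. I expect the main obstacle to be the $\nu=2$ step, namely verifying that the diagonal mean together with $\Gam_\eta$ reproduces precisely the stated matrix $\Gam$, and that the diagonal fluctuation is genuinely small in the $p/2$-variation norm (where $p/2<2$) rather than merely in the supremum norm. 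Alternatively, one may bypass the reduction and rerun the strong-approximation and variational estimates directly in continuous time using (\ref{2.14})--(\ref{2.16}).
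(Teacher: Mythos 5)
Your proposal follows essentially the same route as the paper's own proof in Section~\ref{sec5}: there the author sets $\eta(k)=\int_k^{k+1}\xi(s)\,ds$, verifies the transferred mixing/approximation rates, applies Theorem~\ref{thm2.1} to this sequence, and compares $S_N$ with $Z_N$ and $\bbS_N$ with $\bbZ_N$ via exactly your off-diagonal/diagonal splitting -- the diagonal blocks form the stationary sequence $F_{ij}\circ\vt^k$, whose mean merges the discrete drift $\sum_{l\geq 1}E(\eta_i(0)\eta_j(l))$ into the continuous-time $\Gam$, while the centered diagonal fluctuations are controlled by the Rosenthal-type Lemma~\ref{lem3.2}, with the $p$- and $p/2$-variation smallness coming from $p>2$ just as you indicate. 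Two details deserve attention. First, Theorem~\ref{thm2.1} redefines only the sequence $\eta$, whereas Theorem~\ref{thm2.2} requires redefining the process $\xi$ itself; the paper glues the pairs $(\xi,\eta)$ and $(\tilde\eta,\cW)$ into a triple with the correct joint marginals using Lemma A1 of \cite{BP} immediately after (\ref{5.1}), a coupling step your sketch omits but which is needed for the statement as formulated. Second, your description of the $\nu\geq 3$ step as ``the continuity of this extension in the variational norms propagates (\ref{2.8})'' is misleading if read as an appeal to the Lyons continuity theorem: Remark~\ref{rem7.2} explains precisely why that theorem cannot be invoked here, since the available bounds on $\|S_N-W_N\|_{p,[0,T]}$ and $\|\bbS_N-\bbW_N\|_{p/2,[0,T]}$ are global on $[0,T]$ and are not dominated by $N^{-\al}\phi_N(s,t)$ on every subinterval, as that theorem would require. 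What actually works -- and what the paper does -- is the quantitative induction of Section~\ref{sec7} over the levels $\nu$, using partitions of mesh $N^{-\al}$, the factorial bounds with the control $\phi_N$, and the Kolmogorov--Chentsov moment estimates; in the continuous-time case this simplifies, since $\bbS_N^{(\nu)}$ is the genuine Lyons extension of $(\bbS_N^{(1)},\bbS_N^{(2)})$ and the discrete-time workaround of Proposition~\ref{prop7.1} is unnecessary, which is presumably what you intend by ``identical to the corresponding step in Theorem~\ref{thm2.1}.''
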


\subsection{Continuous time suspension setup}\label{subsec2.3}

Here we start with a complete probability space $(\Om,\cF,P)$, a
$P$-preserving invertible transformation $\vt:\,\Om\to\Om$ and
a two parameter family of countably generated $\sig$-algebras
$\cF_{m,n}\subset\cF,\,-\infty\leq m\leq n\leq\infty$ such that
$\cF_{mn}\subset\cF_{m'n'}\subset\cF$ if $m'\leq m\leq n
\leq n'$ where $\cF_{m\infty}=\cup_{n:\, n\geq m}\cF_{mn}$ and
$\cF_{-\infty n}=\cup_{m:\, m\leq n}\cF_{mn}$. The setup includes
also a (roof or ceiling) function $\tau:\,\Om\to (0,\infty)$ such that
for some $\hat L>0$,
\begin{equation}\label{2.17}
\hat L^{-1}\leq\tau\leq\hat L.
\end{equation}
Next, we consider the probability space $(\hat\Om,\hat\cF,\hat P)$ such that $\hat\Om=\{\hat\om=
(\om,t):\,\om\in\Om,\, 0\leq t\leq\tau(\om)\},\, (\om,\tau(\om))=(\vt\om,0)\}$, $\hat\cF$ is the
restriction to $\hat\Om$ of $\cF\times\cB_{[0,\hat L]}$, where $\cB_{[0,\hat L]}$ is the Borel
$\sig$-algebra on $[0,\hat L]$ completed by the Lebesgue zero sets, and for any $\Gam\in\hat\cF$,
\[
\hat P(\Gam)=\bar\tau^{-1}\int\bbI_\Gam(\om,t)dtdP(\om)\,\,\mbox{where}\,\,\bar\tau=\int\tau dP=E\tau
\]
and $E$ denotes the expectation on the space $(\Om,\cF,P)$.

Finally, we introduce a Lebesgue integrable vector valued stochastic process $\xi(t)=\xi(t,(\om,s))$, $-\infty<t<\infty,\, 0\leq
s\leq\tau(\om)$ on $\hat\Om$ satisfying
\begin{eqnarray*}
&\int\xi(t)d\hat P=0,\,\,\xi(t,(\om,s))=\xi(t+s,(\om,0))=\xi(0,(\om,t+s))\,\,\mbox{if}\,\, 0\leq t+s<\tau(\om)\\
&\mbox{and}\,\,\xi(t,(\om,s))=\xi(0,(\vt^k\om,u))\,\,\mbox{if}\,\, t+s=u+\sum_{j=0}^{k-1}\tau(\vt^j\om)\,\,\mbox{and}\\
&0\leq u<\tau(\vt^k\om).
\end{eqnarray*}
This construction is called in dynamical systems a suspension and it is a standard fact that $\xi$ is a
stationary process on the probability space $(\hat\Om,\hat\cF,\hat P)$. We will use this several times in Section \ref{sec6}.

Nevertheless, in what follows we will write $\xi(t)=\xi(t,\om)$ for $\xi(t,(\om,0))$ and will view it as a vector stochastic process on
the basic probability space $(\Om,\cF,P)$. When we will use stationarity of the process $\xi$ on the space $(\hat\Om,\hat\cF,\hat P)$ in Section \ref{sec6}, 
we will consider integrals with respect to the measure $\hat P$. Otherwise, all moment estimates here will be with respect to the expectation $E$ on the
 space $(\Om,\cF,P)$. In this setup we define
\[
\Sig^{(\nu)}(u,v)=\int_{u\bar\tau\leq u_1\leq...\leq u_\nu\leq v\bar\tau}\xi(u_1)\otimes\cdots\otimes\xi(u_\nu)du_1\cdots du_\nu,\, u\leq v,
\]
\[
\Sig^{i_1,...,i_\nu}(u,v)=\int_{u\bar\tau\leq u_1\leq...\leq u_\nu\leq v\bar\tau}\xi_{i_1}(u_1)\xi_{i_2}(u_2)\cdots\xi_{i_\nu}(u_\nu)du_1\cdots du_\nu,
\]
\[
\bbS_N^{(\nu)}(s,t)=N^{-\nu/2}\Sig^{(\nu)}(sN,tN),\,\,\,\bbS_N^{i_1,...,i_\nu}(s,t)=N^{-\nu/2}\Sig^{i_1,...,i_\nu}(sN,tN)
\]
and, again, $\bbS_N(s,t)=\bbS_N^{(2)}(s,t)$, $S_N(s,t)=\bbS_N^{(1)}(s,t)$,
\[
S_N(s,t)=\bbS_N^{(1)}(s,t),\,\,\Sig^{(\nu)}(v)=\Sig^{(\nu)}(0,v)\quad\mbox{and}\quad \bbS_N^{(\nu)}(t)=\bbS_N^{(\nu)}(0,t).
\]

Set $\eta(\om)=\int_0^{\tau(\om)}\xi(s,\om)ds$, $\eta(m)=\eta\circ\vt^m$ and
\begin{equation}\label{2.18}
\be(a,l)=\sup_m\max\big(\|\tau\circ\vt^m-E(\tau\circ\vt^m|\cF_{m-l,m+l})\|_a,\,\|\eta(m)-E(\eta(m)|\cF_{m-l,m+l})\|_a\big).
\end{equation}
We define $\varpi_{b,a}(n)$ by (\ref{2.2}) with respect to the $\sig$-algebras $\cF_{kl}$ appearing here.
Observe also that $\eta(k)=\eta\circ\vt^k$ is a stationary sequence of random vectors and we introduce also the covariance matrix
 \begin{equation}\label{2.19}
\vs_{ij}=\lim_{n\to\infty}\frac 1n\sum_{k,l=0}^nE(\eta_i(k)\eta_j(l))
\end{equation}
where the limit exists under our conditions in the same way as in (\ref{2.6}). We introduce also the matrix
\[
\Gam=(\Gam^{ij}),\,\,\Gam^{ij}=\sum_{l=1}^\infty E(\eta_i(0)\eta_j(l))+E\int_0^{\tau(\om)}\xi_j(s,\om)ds\int_0^s\xi_i(u,\om)du.
\]
The following is our limit theorem in the present setup.

\begin{theorem}\label{thm2.3}
Assume that $E\eta=E\int_0^\tau\xi(s)ds=0$ and $E\int_0^\tau |\xi(s)|^{K}ds<\infty$. The latter replaces the moment
condition on $\xi(0)$ in (\ref{2.4}) while other conditions there are supposed to hold true with integers $L\geq 1$ and
$M>\frac p{p-2}$ for some $p\in(2,3)$ where $\varpi$ is given by (\ref{2.2}) for $\sig$-algebras $\cF_{mn}$
appearing in this subsection and $\be$ is defined by (\ref{2.18}). Then the process $\xi(t),\,-\infty<t<\infty$  can be
redefined preserving its distributions on a sufficiently rich probability space which contains also a $d$-dimensional Brownian motion $\cW$ with the covariance matrix $\vs$ (at the time 1) so that for $\bbW_N^{(\nu)}$, constructed
as in Theorem \ref{thm2.1} with the rescaled Brownian motion $W_N(t)=N^{-1/2}\cW(Nt),\,  t\in[0,T]$ and the matrix $\Gam$
introduced above, and for any $N\geq 1$
 the estimate (\ref{2.8}) remains true for $\bbS_N^{(\nu)}$ with $\nu=1,...,4M$ defined above.
 \end{theorem}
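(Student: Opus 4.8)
The plan is to reduce the suspension setup to the discrete-time Theorem \ref{thm2.1}, applied to the induced stationary sequence $\eta(m)=\eta\circ\vt^m$ with $\eta=\int_0^\tau\xi(s)ds$, and then to pass from the levels $\nu=1,2$ to $\nu\ge3$ exactly as in the rough-paths argument of Section \ref{sec7}. First I would check that $\eta$ and $\tau$ satisfy the hypotheses of Theorem \ref{thm2.1}: both are adapted to the same two-parameter family $\cF_{mn}$, so the mixing coefficients $\varpi_{b,a}(n)$ are the common ones from (\ref{2.2}); the rate (\ref{2.18}) is precisely the approximation quantity $\be(a,l)$ required there, now controlling $\eta(m)$ and $\tau\circ\vt^m$ simultaneously; and since $\tau\le\hat L$, the H\"older inequality gives $E|\eta(0)|^{K}\le\hat L^{K-1}E\int_0^\tau|\xi(s)|^{K}ds<\infty$. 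Hence (\ref{2.4}) holds for $\eta$, and Theorem \ref{thm2.1} furnishes a Brownian motion $\cW$ with covariance $\vs$ from (\ref{2.19}) together with processes $\bbW_N^{(\nu)}$ obeying (\ref{2.8}) for the \emph{discrete} iterated sums of $\eta$.

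Next I would rewrite the continuous suspension signature through the discrete $\eta$-signature by a loop decomposition. Writing $S_n\tau=\sum_{j=0}^{n-1}\tau\circ\vt^j$ for the Birkhoff sums of the roof and letting $N_t$ be the number of complete loops accumulated by continuous time $Nt\bar\tau$, so that $S_{N_t}\tau\le Nt\bar\tau<S_{N_t+1}\tau$, the ergodic theorem gives $N_t\approx Nt$; the factor $\bar\tau$ in the integration limits of $\Sig^{(\nu)}$ is chosen exactly so that the loop count matches the discrete scaling. For $\nu=1$ one has
\[
\Sig^{(1)}(Nt)=\sum_{j=0}^{N_t-1}\eta(j)+\int_{S_{N_t}\tau}^{Nt\bar\tau}\xi(u)du,
\]
where the last partial-loop term is a sawtooth whose $p$-variation over $[0,T]$ is $O(N^{1/p})$, hence $O(N^{1/p-1/2})$ after the $N^{-1/2}$ scaling, which is a genuine power saving since $p\in(2,3)$. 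The discrepancy between $\sum_{j<N_t}\eta(j)$ and the honest partial sum $\sum_{j<Nt}\eta(j)$ involves $|N_t-Nt|=O(\sqrt N)$ centered terms and is controlled by coupling the strong approximation for $\eta$ with that for $\tau-\bar\tau$ on the same space; it is $O(N^{-1/4})$ after scaling. This gives (\ref{2.8}) for $\nu=1$.

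The decisive computation is at $\nu=2$, where the double integral splits according to whether the two times fall in the same loop or in different loops:
\[
\Sig^{(2)}(Nt)=\sum_{0\le k<l<N_t}\eta(k)\otimes\eta(l)+\sum_{k=0}^{N_t-1}\zeta(k)+(\mbox{boundary and cross terms}),
\]
with the within-loop contribution $\zeta(\om)=\int_0^{\tau(\om)}\big(\int_0^s\xi(u,\om)du\big)\otimes\xi(s,\om)\,ds$. The off-diagonal double sum is handled by Theorem \ref{thm2.1} and reproduces $\int_0^t W_N\otimes dW_N$ together with the drift $t\sum_{l\ge1}E(\eta_i(0)\eta_j(l))$, i.e. the first part of $\Gam^{ij}$; the diagonal sum is an ergodic sum of the stationary sequence $\zeta(k)$, equal to $N_tE\zeta+O(\sqrt N)$, so that after the $N^{-1}$ scaling it converges to $tE\zeta$ with $O(N^{-1/2})$ fluctuation, and since $E\zeta^{ij}=E\int_0^\tau\xi_j(s)\int_0^s\xi_i(u)du\,ds$ it supplies exactly the second part of $\Gam^{ij}$ defined before Theorem \ref{thm2.3}. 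The boundary partial-loop integrals, the cross terms pairing a complete loop with the final partial loop, and the replacement of $N_t$ by $Nt$ are each estimated as for $\nu=1$ and contribute only power-saving errors, yielding (\ref{2.8}) for $\nu=2$.

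Finally, with (\ref{2.8}) in hand for $\nu=1,2$ in this setup, the estimates for $\nu\ge3$ follow verbatim from Section \ref{sec7}, where the continuity of the Lyons extension lifts level-one and level-two control in the variation norms to all higher levels. I expect the main obstacle to be the bookkeeping of the random time change: since $N_t$ is a random function of $t$ driven by the Birkhoff sums of $\tau$, one must show that replacing $N_t$ by $Nt$ and discarding the boundary partial loops costs only $O(N^{-\ve_\nu})$ in the $p/\nu$-variation norm, uniformly in $t\in[0,T]$, and then propagate these scalar estimates through the iterated tensor structure. The secondary delicate point is the clean algebraic identification of the diagonal within-loop term with the supplementary part of $\Gam$, which guarantees that the limiting drift carried by $\bbW_N^{(\nu)}$ is the correct one.
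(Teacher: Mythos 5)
Your proposal follows essentially the same route as the paper's actual proof in Sections \ref{sec6} and \ref{sec7}: reduction to Theorem \ref{thm2.1} for the induced sequence $\eta(m)=\eta\circ\vt^m$ (with the correct moment bound $E|\eta(0)|^{K}\leq\hat L^{K-1}E\int_0^\tau|\xi(s)|^{K}ds$), the loop decomposition governed by the random loop count $n(t\bar\tau N)$, and, at level two, the splitting into an off-diagonal double sum of $\eta$'s plus a Birkhoff sum of the within-loop terms; your $\zeta(k)$ is exactly the paper's $F_{ij}\circ\vt^k$ in the decomposition (\ref{6.20}), and your identification $E\zeta^{ij}=E\int_0^\tau\xi_j(s)\int_0^s\xi_i(u)\,du\,ds$ with the supplementary summand of $\Gam^{ij}$ is precisely how the paper reconciles $\bbW_N^\eta=\bbW_N-(t-s)F$ with the drift in (\ref{6.1}). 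The acknowledged bookkeeping you defer is carried out in the paper by the moment bound (\ref{6.2}) on $|n(s\bar\tau)-s|$, the window/truncation argument (\ref{6.7})--(\ref{6.8}), and the partition-splitting at scale $N^{-(1-\al)}$ combined with Theorem \ref{hoelder} and Proposition \ref{hoelder2} in (\ref{6.16})--(\ref{6.19}) and (\ref{6.38})--(\ref{6.42}).

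Two deviations deserve mention. First, no strong approximation (coupling) for $\tau-\bar\tau$ is needed to replace $N_t$ by $Nt$: the paper uses only the moment estimate (\ref{6.2}), which comes from Lemma \ref{lem3.2} applied to Birkhoff sums of $\tau-\bar\tau$ --- this is exactly why (\ref{2.18}) includes $\tau$; your coupling variant would also work (apply Theorem \ref{thm2.1} to the $(d+1)$-dimensional vector $(\eta,\tau-\bar\tau)$), but it is heavier than necessary. Second, and more substantively, your description of the $\nu\geq3$ step as ``the continuity of the Lyons extension lifts level-one and level-two control'' would fail if implemented literally: as Remark \ref{rem7.2} explains, the Lyons continuity theorem requires estimates of the form $\|S_N-W_N\|^p_{p,[s,t]}\leq N^{-\al}\phi_N(s,t)$ relative to a common control on \emph{every} subinterval, whereas only the global bounds on $[0,T]$ are proved. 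What Section \ref{sec7} actually runs is a bespoke induction, (\ref{7.16})--(\ref{7.29}), based on the Chen relations, the superadditive control $\phi_N$ of (\ref{7.6}) with the extension (not continuity) theorem, and the Kolmogorov--Chentsov bounds; since in the suspension setup $\bbS_N^{(n)}$, $n\geq3$, coincide with the Lyons extension of $(\bbS_N^{(1)},\bbS_N^{(2)})$, Proposition \ref{prop7.1} is not even needed there. Because you defer to Section \ref{sec7} anyway, this does not derail your plan, but the mechanism should be named correctly.
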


\begin{remark}\label{rem2.4}
Theorems \ref{thm2.1}--\ref{thm2.3} can be extended with essentially the same proof to the setup where in place of one discrete
or continuous time process $\xi(k),\, k\in\bbZ$ or $\xi(t),\, t\in\bbR$ we have sequences of $\bbR^d$-valued jointly stationary
processes $\xi^{(i)}(k),\, k\in\bbZ$ or $\xi^{(i)}(t),\, t\in\bbR,\, i=1,2,...$ all satisfying the conditions of the corresponding 
Theorems \ref{thm2.1}--\ref{thm2.3} with respect to the same family of $\sig$-algebras $\cF_{mn}$ or $\cF_{st}$. In this setup we
will have iterated sums and integrals of the form
\[
\bbS_N^{(\nu)}(s,t)=N^{-\nu/2}\sum_{[sN]\leq k_1<...<k_\nu<[tN]}\xi^{(1)}(k_1)\otimes\xi^{(2)}(k_2)\otimes\cdots\otimes\xi^{(\nu)}(k_\nu)
\]
and
\[
\bbS_N^{(\nu)}(s,t)=N^{-\nu/2}\int_{sN\leq u_1\leq...\leq u_\nu<tN}\xi^{(1)}(u_1)\otimes\xi^{(2)}(u_2)\otimes\cdots\otimes\xi^{(\nu)}(u_\nu)du_1...du_\nu.
\]
The limiting processes $\bbW_N^{(\nu)}$ should be constructed now not by one Brownian motion $W_N$ having a covariance matrix $\vs$ but by
a sequence of Brownian motions $W_N^{(i)},\, i=1,2,...$ with covariance matrices $\vs^{(i)}$ so that $\bbW_N^{(1)}=W_N^{(1)}$,
\[
\bbW_N^{(2)}(s,t)=\int_s^tW_N^{(1)}(s,v)\otimes dW_N^{(2)}(v)+(t-s)\Gam^{(2)},
\]
where $\Gam^{(i)}=\sum_{l=1}^\infty E(\xi^{(i-1)}(0)\otimes\xi^{(i)}(l))$, and recursively,
\[
\bbW_N^{(\nu)}(s,t)=\int_s^tW_N^{(\nu-1)}(s,v)\otimes dW_N^{(\nu)}(v)+\int_s^t\bbW_N^{(\nu-2)}(s,v)\otimes\Gam^{(\nu)}dv.
\]
In particular, this will work when the processes $\xi^{(i)}$ are constructed by one appropriate discrete or continuous time
dynamical system $F^n$ or $F^t$ but with different functions $g^{(i)}$, i.e. $\xi^{(i)}(n)=g^{(i)}\circ F^n$ or $\xi^{(i)}(t)
=g^{(i)}\circ F^t$.
\end{remark}

\begin{remark}\label{rem2.5}
By the definition $W_N(t)=N^{-1/2}\cW(Nt)$ and changing time in the stochastic integral
\[
\bbW_N(t)=\int_0^tW_N(v)\otimes dW_N(v)+t\Gam=N^{-1}(\int_0^{Nt}\cW(u)\otimes d\cW(u)+Nt\Gam)=N^{-1}\bbW_1(Nt).
\]
Continuing by induction we have
\begin{eqnarray*}
&\bbW_N^{(n)}(t)=\int_0^t\bbW_N^{(n-1)}(v)\otimes dW_N(v)+\int_0^t\bbW_N^{(n-2)}(v)\otimes\Gam dv\\
&=N^{-n/2}(\int_0^{Nt}\bbW_1^{(n-1)}(u)\otimes d\cW(u)+\int_0^{Nt}\bbW_1^{(n-2)}(u)\otimes\Gam du)=N^{-n/2}\bbW_1^{(n)}(Nt).
\end{eqnarray*}
Hence, we can replace $\bbW_N^{(\nu)}$ in (\ref{2.8}) (as well as in Theorems \ref{thm2.2} and \ref{thm2.3}) by 
$\hat\bbW_N^{(\nu)}$ where $\hat\bbW_N^{(\nu)}(t)=N^{-n/2}\bbW_1^{(\nu)}(Nt)$ which could be a helpful clarification
since $\bbW_1^{(\nu)}$ does not depend on $N$ and the dependence on $N$ appears here only in normalization and time
stretch coefficients.
\end{remark}

\section{Auxiliary estimates}\label{sec3}\setcounter{equation}{0}
\subsection{General estimates}\label{subsec3.1}
We will need the following estimate which follows from Lemmas 3.4 and 3.7 from \cite{Ki20}.
\begin{lemma}\label{lem3.1} Let $\eta_1,\eta_2,...,\eta_N$ be random $d$-dimensional vectors and
$\cH_1\subset\cH_2\subset...\subset\cH_N$ be a filtration of $\sig$-algebras such that $\eta_m$ is
$\cH_m$-measurable for each $m=1,2,...,N$. Assume also that $E|\eta_m|^{B}<\infty$ for some integer $B\geq 2$
and each $m=1,...,N$. Set $S_m=\sum_{j=1}^m\eta_j$. Then
\begin{equation}\label{3.1}
E\max_{1\leq m\leq N}|S_m|^{B}\leq 2^{B-1}A_{B}^{B}(3(B)!d^{B/2}N^{B/2}+N)
\end{equation}
where $A_{B}=\sup_{i\geq 1}\sum_{j\geq i}\| E(\eta_j|\cH_i)\|_{B}$ .
\end{lemma}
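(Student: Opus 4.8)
The plan is to prove the estimate through a martingale--coboundary (Gordin-type) decomposition of the partial sums, which reduces the problem to Doob's maximal inequality together with a moment inequality for martingales, plus a crude bound for a lower-order remainder.

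First I would introduce, for the fixed horizon $N$, the auxiliary vectors $h_m=\sum_{j=m}^N E(\eta_j|\cH_m)$, $1\leq m\leq N$. A direct computation using the tower property gives $h_m-E(h_{m+1}|\cH_m)=\eta_m$, so that setting $d_m=h_m-E(h_m|\cH_{m-1})$ and $g_m=E(h_{m+1}|\cH_m)$ one obtains the pointwise identity $\eta_m=d_m+(g_{m-1}-g_m)$. Summing in $m$ yields $S_m=M_m+(g_0-g_m)$, where $M_m=\sum_{k=1}^m d_k$ and $g_N=0$ (an empty sum). Since each $h_m$ is $\cH_m$-measurable and $E(d_m|\cH_{m-1})=0$, the sequence $(M_m,\cH_m)$ is a martingale. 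The point of this decomposition is that all the relevant norms are controlled by $A_B$: because the summands $\|E(\eta_j|\cH_m)\|_B$ are nonnegative, $\|h_m\|_B\leq\sum_{j\geq 1}\|E(\eta_j|\cH_m)\|_B\leq A_B$, whence $\|g_m\|_B\leq A_B$ and $\|d_m\|_B\leq 2A_B$.

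Next I would estimate the two pieces separately. For the coboundary I use only the crude bound $\max_m|g_0-g_m|^B\leq 2^{B-1}(|g_0|^B+\max_m|g_m|^B)$ together with $\max_m|g_m|^B\leq\sum_{m=1}^N|g_m|^B$, which after taking expectations contributes a term of order $N A_B^B$; this is precisely the source of the additive $N$ in (\ref{3.1}). For the martingale part I would apply Doob's $L^B$ maximal inequality, $E\max_m|M_m|^B\leq(\frac{B}{B-1})^B E|M_N|^B$, and then a martingale moment (Rosenthal / von~Bahr--Esseen / Burkholder) inequality to bound $E|M_N|^B$ by a constant times $(\sum_{m=1}^N\|d_m\|_B^2)^{B/2}\leq(4A_B^2 N)^{B/2}$. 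The vector character is handled coordinatewise: from $|x|^B\leq d^{B/2-1}\sum_{k=1}^d|x_k|^B$ and summing the scalar martingale bounds over the $d$ coordinates one recovers the factor $d^{B/2}$. Combining the two pieces through $|S_m|^B\le 2^{B-1}(|M_m|^B+|g_0-g_m|^B)$ produces the announced form $2^{B-1}A_B^B(3\,B!\,d^{B/2}N^{B/2}+N)$.

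The main obstacle is the martingale moment inequality: one must select a version whose constant is of order $B!$ (rather than the cruder $B^{B}$) in order to match the factor $3\,B!$, and one has to carry the coordinatewise bookkeeping carefully so as to land on $d^{B/2}$ and not a larger power of $d$. The decomposition itself, the measurability of the $d_m$, and the verification $g_N=0$ are routine; the only mild subtlety there is that $h_m$ depends on the horizon $N$, which is harmless since $N$ is held fixed throughout. Alternatively, one can first prove the stated bound for $E|S_N|^B$---and more generally for increments $E|S_v-S_u|^B$---by the same decomposition, and then invoke a M\'oricz-type maximal inequality to pass to $E\max_{1\leq m\leq N}|S_m|^B$, which is presumably the route taken through Lemmas 3.4 and 3.7 of \cite{Ki20}.
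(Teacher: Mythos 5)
Your decomposition is sound: with $h_m=\sum_{j=m}^N E(\eta_j|\cH_m)$ the identity $\eta_m=d_m+(g_{m-1}-g_m)$ checks out by the tower property (taking $\cH_0$ trivial, so $g_0=Eh_1$), $(M_m,\cH_m)$ is indeed a martingale, and $\|h_m\|_B\leq A_B$, $\|g_m\|_B\leq A_B$, $\|d_m\|_B\leq 2A_B$ all follow from conditional Jensen. This is, in substance, the same mechanism as in the proof the paper invokes by citation: the paper does not prove Lemma \ref{lem3.1} itself but refers to Lemmas 3.4 and 3.7 of \cite{Ki20}, which rest on the martingale moment bound of Lemma 3.2.5 in \cite{HK} (stated there for even integers $B$). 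The structure of the constant in (\ref{3.1}) is the fingerprint of exactly your route: the additive $N$ comes from the crude coboundary estimate $\max_m|g_m|^B\leq\sum_m|g_m|^B$, and the $B!$ from an even-moment expansion for the martingale part combined with Doob's inequality (note your alternative guess at the end --- increment moments plus a M\'oricz-type maximal inequality --- would produce logarithmic factors in $N$, which (\ref{3.1}) does not have, so the martingale--coboundary route is the right one).

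The one genuine shortfall is the constant bookkeeping, which you flag but do not resolve, and which as sketched does \emph{not} close. First, your coboundary step gives $E\max_m|g_0-g_m|^B\leq 2^{B-1}(1+N)A_B^B$, so after the outer splitting $|S_m|^B\leq 2^{B-1}(|M_m|^B+|g_0-g_m|^B)$ that piece contributes $4^{B-1}(N+1)A_B^B$ rather than the stated $2^{B-1}NA_B^B$. Second, for the martingale piece the product of Doob's factor $(\frac{B}{B-1})^B$, the constant $C_B$ in the Rosenthal/Burkholder-type bound, and the factor $2^B$ coming from $\|d_m\|_B\leq 2A_B$ must be at most $3\,B!$; this holds comfortably for large $B$ (BDG-type constants grow like $B^{B/2}$, much slower than $B!$), but fails at the small end: already for $B=2$, where the best possible martingale constant is $1$ (orthogonality of increments), one gets $4\cdot 1\cdot 4=16>3\cdot 2!=6$. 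So your argument proves (\ref{3.1}) with a larger universal constant of the same shape $C(B,d)A_B^B(N^{B/2}+N)$, not with the literal constant $2^{B-1}A_B^B(3(B)!\,d^{B/2}N^{B/2}+N)$. For every use of Lemma \ref{lem3.1} in this paper only the dependence on $N$ and on $A_B$ matters, so the deficiency is harmless downstream, but to recover the stated constant one needs the sharper even-moment martingale estimate of \cite{HK} (and its extension to odd $B$) in place of off-the-shelf Rosenthal/BDG, together with a less wasteful treatment of the coboundary.
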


In fact, the proof of Lemma 3.4 in \cite{Ki20} is based on Lemma 3.2.5 from \cite{HK} and the latter is formulated for
even itegers $B$. But looking at the details of the proof there it is easy to see that it is valid for all integers $B\geq 2$.
The following important result appears as Lemma 3.4 of \cite{FK} under the uniform boundedness and the $\phi$-mixing
assumptions. We will give its proof under our more general conditions which will follow a similar pattern but will
require additional H\" older inequality estimates.

\begin{lemma}\label{lem3.2}
Let $\eta(k),\,\zeta_k(l),\, k=0,1,...,\, l=0,...,k$ be sequences of random variables on a probability space
$(\Om,\cF,P)$ such that for all $k,l,n\geq 0$ and some $L,M\geq 1$ and $K=\max(2L,4M)$ satisfying (\ref{2.4}),
\begin{eqnarray}\label{3.2}
&\quad\|\eta(k)-E(\eta(k)|\cF_{k-n,k+n})\|_{K},\,\|\zeta_k(l)-E(\zeta_k(l)|\cF_{l-n,l+n})\|_{K}\\
&\leq\be(K,n),\,\,\|\eta(k)\|_{K},\,\|\zeta_k(l)\|_{K}\leq\gam_{K}<\infty\,\,\mbox{and}\,\,E\eta(k)=E\zeta_k(l)=0,\,\forall k,l\nonumber
\end{eqnarray}
where $\be(K,\cdot)$ and $\vp_{L,4M}(\cdot)$ satisfy (\ref{2.4}) and the $\sig$-algebras $\cF_{kl}$ are the same as in
Section \ref{sec2}. Then for any $N\geq 1$,
\begin{equation}\label{3.3}
E\max_{1\leq n\leq N}(\sum_{k=0}^n\eta(k))^{4M}\leq C^{\eta}(M)N^{2M}
\end{equation}
and
\begin{eqnarray}\label{3.4}
&E\max_{m\leq n\leq N}\big(\sum_{k=m}^n\sum_{j=\ell(k)}^{k-1}(\eta(k)\zeta_k(j)-E(\eta(k)\zeta_k(j)))\big)^{2M}\\
&\leq C^{\eta,\zeta}(M)(N-m)^{M}\max_{m\leq k\leq N}(k-\ell(k))^{M}\nonumber
\end{eqnarray}
where $0\leq\ell(k)<k$ is an integer valued function (may be constant) and $C^{\eta,\zeta}(M)>0$ depends only on
$\be,\gam,\vp$ and $M$ but it does not depend on $N,m,\ell$ or on the sequences $\eta$ and $\zeta$ themselves.
\end{lemma}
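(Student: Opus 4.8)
The plan is to deduce both estimates from the maximal inequality of Lemma \ref{lem3.1}, applied with $B=4M$ for (\ref{3.3}) and with $B=2M$ for (\ref{3.4}); the real work is only to verify its hypotheses under the present weak-dependence assumptions in place of the uniform boundedness and $\phi$-mixing used in \cite{FK}. In both cases the claimed power of $N$ is precisely the $N^{B/2}$ factor produced by Lemma \ref{lem3.1}, so everything reduces to bounding the constant $A_B=\sup_i\sum_k\|E(\cdot|\cH_i)\|_B$ uniformly in $N$.

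For (\ref{3.3}) I would take $\cH_i=\cF_{-\infty,i+r}$ for a fixed shift $r$ and apply Lemma \ref{lem3.1} to $\eta(0),\dots,\eta(N)$, so that $E\max_n(\sum_{k=0}^n\eta(k))^{4M}\le 2^{4M-1}A_{4M}^{4M}(3(4M)!\,d^{2M}N^{2M}+N)$ and the $+N$ term is of lower order. To control $\|E(\eta(k)|\cF_{-\infty,i+r})\|_{4M}$ for $k>i$ I would insert the two-sided approximant $\eta_n(k)=E(\eta(k)|\cF_{k-n,k+n})$ with window $n\asymp(k-i)/2$, writing $\eta(k)=\eta_n(k)+(\eta(k)-\eta_n(k))$. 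The error contributes at most $\be(K,n)$ by (\ref{3.2}); for the main part $\eta_n(k)$ is $\cF_{k-n,\infty}$-measurable and centered, so by the definition (\ref{2.1}) of $\varpi_{b,a}$ its conditional expectation is bounded by $\varpi_{L,4M}((k-n)-(i+r))\,\|\eta_n(k)\|_L\le\gam_K\varpi_{L,4M}(\tfrac12(k-i)-r)$. Summing over $k$, the two summable tails of (\ref{2.4}) — the $\sqrt{\be}$ series controlling the approximation and the $\varpi_{L,8M}\ge\varpi_{L,4M}$ series controlling the prediction error — make $A_{4M}$ finite, which is exactly what that hypothesis is for.

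For (\ref{3.4}) I would set $\theta(k)=\sum_{j=\ell(k)}^{k-1}(\eta(k)\zeta_k(j)-E(\eta(k)\zeta_k(j)))$ and apply Lemma \ref{lem3.1} with $B=2M$ to $\{\theta(k)\}_{k=m}^N$, so that the target $(N-m)^M\max_k(k-\ell(k))^M$ is the $(N-m)^M$ factor times $A_{2M}^{2M}$, once one shows $A_{2M}\le C\max_k(k-\ell(k))^{1/2}$. The diagonal input is the single bound $\|\theta(k)\|_{2M}\le C(k-\ell(k))^{1/2}$: with $W_k=\sum_{j=\ell(k)}^{k-1}\zeta_k(j)$ one has $\theta(k)=\eta(k)W_k-\sum_jE(\eta(k)\zeta_k(j))$, and since $\{\zeta_k(j)\}_j$ satisfies (\ref{3.2}), the already proved estimate (\ref{3.3}) applied to it gives $\|W_k\|_{4M}\le C(k-\ell(k))^{1/2}$; a Cauchy--Schwarz step $\|\eta(k)W_k\|_{2M}\le\|\eta(k)\|_{4M}\|W_k\|_{4M}$, for which the high moment assumption in (\ref{2.4}) leaves ample room, yields the claim, the centering $\sum_j|E(\eta(k)\zeta_k(j))|=O(1)$ being lower order by covariance decay. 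The off-diagonal inputs $\|E(\theta(k)|\cH_i)\|_{2M}$ are controlled exactly as in the previous paragraph, now approximating both factors and using H\"older to split the product before applying $\varpi_{L,\cdot}$ and $\be(K,\cdot)$.

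The main obstacle, and the only genuine departure from \cite{FK}, is the systematic replacement of uniform bounds by moment bounds: every time a coefficient $\varpi_{L,a}$ is used it must be paired with the correct $L^b$-norm as in (\ref{2.1}), and the products in (\ref{3.4}) force one to split moments by H\"older and then interpolate the mixing rates through (\ref{2.9})--(\ref{2.11}). Keeping both summable series of (\ref{2.4}) in force simultaneously while letting the window $n$ grow with the gap $k-i$ is the technical heart of the estimate; once $A_B$ is shown finite for (\ref{3.3}) and of order $\max_k(k-\ell(k))^{1/2}$ for (\ref{3.4}), both conclusions follow at once from Lemma \ref{lem3.1}, whose maximal form already supplies the $\max_n$ appearing on the left-hand sides.
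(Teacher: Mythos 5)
Your overall architecture is the paper's: both estimates are deduced from Lemma \ref{lem3.1}, and all the work goes into bounding $A_B$ via the interplay of the approximation coefficient $\be(K,\cdot)$ and the mixing coefficient $\vp_{L,4M}(\cdot)$ from (\ref{2.4}). Your single-window argument for (\ref{3.3}) is sound and is essentially the simplification the paper itself invokes ("disregarding the sequence $\zeta_k(l)$"): inserting $E(\eta(k)|\cF_{k-n,k+n})$ with $n\asymp(k-i)/2$, paying $\be(K,n)$ for the error and $\gam_K\vp_{L,4M}(\cdot)$ for the localized centered part, makes $A_{4M}$ finite. (Minor point: the sum defining $A_B$ should run over future indices $j\geq i$, as the paper's own application $\sup_n\sum_{k\geq n}\|E(\cdot|\cG_n^{i,j})\|_{2M}$ makes clear; your usage is consistent with this.)

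The gap is in the off-diagonal estimate for (\ref{3.4}), which you dispatch with "approximating both factors and using H\"older to split the product." Two ingredients are missing, and the second is fatal to the route as written. First, for a fixed conditioning time $i$ and $k>i$, no single mixing application covers all $j\in[\ell(k),k)$: for $j$ below the midpoint $\frac{k+i}{2}$ the localized $\zeta_k(j)$'s are measurable with respect to an intermediate $\sig$-algebra $\cF_{-\infty,\,\mathrm{mid}+O(w)}$, so one pulls them out and applies $\vp_{L,4M}$ to the centered localized $\eta$-factor alone, while for $j$ above the midpoint the entire centered product is future-measurable and $\vp_{L,2M}$ must be applied to the product itself; this is exactly the split into $Q^{(1)}_{i,j,n}$ and $Q^{(2)}_{i,j,n}$ in the paper, which your sketch never identifies. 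Second, and quantitatively decisive: with a single window $w\asymp k-i$, the localization error of the $\zeta$-block, estimated termwise by the triangle inequality, is of order $(k-\ell(k))\,\be(K,w)$ --- a \emph{full} power of the block length. Summing over $k\geq i$ then gives $A_{2M}\lesssim\max_k(k-\ell(k))$ instead of the required $\max_k(k-\ell(k))^{1/2}$, and Lemma \ref{lem3.1} delivers only $(N-m)^M\max_k(k-\ell(k))^{2M}$, missing (\ref{3.4}). Recovering the square root forces one to control sums of localization \emph{differences} by a maximal inequality as well, and this is precisely what the paper's dyadic telescoping $\eta(k)=\eta^{(1)}(k)+\sum_i(\eta^{(2^i)}(k)-\eta^{(2^{i-1})}(k))$ (and likewise for $\zeta_k$) accomplishes: each dyadic level is a uniformly small sequence (of $L^K$-norm $\leq 2(\be(K,2^i)+\be(K,2^{i-1}))$) to which Lemma \ref{lem3.1} applies, yielding the factor $\sqrt{k-\ell(k)}$ together with a level weight $\be(K,2^j)$, after which the levels are summed using $\sum_k k\be(K,k)<\infty$ and $\sum_k k\vp_{L,4M}(k)<\infty$, both consequences of (\ref{2.4}). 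Your plan could in principle be repaired by applying Lemma \ref{lem3.1} to the difference sums at each scale (i.e., by reintroducing the telescoping), but as stated the H\"older splitting alone does not produce the claimed bound.
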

\begin{proof} Set
\[
\eta^{(i)}(k)=E(\eta(k)|\cF_{k-i,k+i})\,\,\,\mbox{and}\,\,\,\zeta_k^{(i)}(l)=E(\zeta_k(l)|\cF_{l-i,l+i}).
\]
Then
\[
\eta(k)=\lim_{i\to\infty}\eta^{(2^i)}(k)=\eta^{(1)}(k)+\sum_{i=1}^\infty(\eta^{(2^i)}(k)-\eta^{(2^{i-1})}(k))
\]
and
\[
\zeta_k(l)=\lim_{i\to\infty}\zeta_k^{(2^i)}(l)=\zeta_k^{(1)}(l)+\sum_{i=1}^\infty(\zeta_k^{(2^i)}(l)-\zeta_k^{(2^{i-1})}(l)).
\]
where convergence is in the $L^{K}$-sense since
\begin{equation}\label{3.5}
\|\eta^{(2^i)}(k)-\eta^{(2^{i-1})}(k)\|_{K},\,\|\zeta^{(2^i)}_k(l)-\zeta_k^{(2^{i-1})}(l)\|_{K}\leq 2(\be(K,2^i)+
\be(K,2^{i-1}))
\end{equation}
and (\ref{2.4}) holds true. For $i,j=0,1,2,...$ denote
 \begin{eqnarray*}
 &\vrho_{i,j}(k,l)=(\eta^{(2^i)}(k)-\eta^{(2^{i-1})}(k))(\zeta_k^{(2^j)}(l)-\zeta_k^{(2^{j-1})}(l))\\
 &-E\big((\eta^{(2^i)}(k)-\eta^{(2^{i-1})}(k))(\zeta_k^{(2^j)}(l)-\zeta_k^{(2^{j-1})}(l))\big)
 \end{eqnarray*}
 and $Q_{i,j}(k)=\sum_{l=\ell(k)}^{k-1}\vrho_{i,j}(k,l)$ where we set for convenience
 $\eta^{(2^{-1})}(k)=\zeta_k^{(2^{-1})}(l)=0$ for all $k,l\geq 0$.
 In view of these notations, in order to obtain (\ref{3.4}) we have to estimate $\|\max_{m\leq n\leq N}|\bbR(m,n)|\|_{2M}$
  where
 \[
 \bbR(m,n)=\sum_{i,j=0}^\infty\bbR_{i,j}(m,n)\,\,\mbox{and}\,\,\bbR_{i,j}(m,n)=\sum_{k=m}^nQ_{ij}(k).
 \]

Next, introduce $\cG_k^{i,j}=\cF_{-\infty,k+\max(2^i,2^j)}$ and observe that $Q_{i,j}(k)$ is $\cG_k^{i,j}$-measurable.
This enables us to apply Lemma \ref{lem3.1} to the sums $\bbR_{i,j}(m,n)$. First, write
\begin{equation}\label{3.6}
Q_{i,j}(k)=Q^{(1)}_{i,j,n}(k)+Q^{(2)}_{i,j,n}(k)
\end{equation}
where
\begin{eqnarray*}
&Q_{i,j,n}^{(1)}(k)=\sum_{\ell(k)\leq l<\frac {k+n}2+2\max(2^i,2^j),\, l<k}\vrho_{i,j}(k,l)\\
&=(\eta^{(2^i)}(k)-\eta^{(2^{i-1})}(k))\sum_{\ell(k)\leq l<\frac {k+n}2+2\max(2^i,2^j),l<k}
&(\zeta_k^{(2^j)}(l)-\zeta_k^{(2^{j-1})}(l))\\
&-E\big((\eta^{(2^i)}(k)-\eta^{(2^{i-1})}(k))\sum_{\ell(k)\leq l<\frac {k+n}2+2\max(2^i,2^j),l<k}
&(\zeta_k^{(2^j)}(l)-\zeta_k^{(2^{j-1})}(l))\big)\\
&\mbox{and}\,\,\, Q_{i,j,n}^{(2)}(k)=\sum_{\frac {k+n}2+2\max(2^i,2^j)\leq l<k}\vrho_{i,j}(k,l).
\end{eqnarray*}

 For $k-n\geq 8\max(2^i,2^j)$ we can write by the definition of $\vp_{L,4M}$ and the Cauchy--Schwarz inequality that
 \begin{eqnarray}\label{3.7}
 &\,\,\,\| E(Q_{i,j,n}^{(1)}(k)|\cG_n^{i,j}\|_{2M}\leq 2\| E\big(E(\eta^{(2^i)}(k)-\eta^{(2^{i-1})}(k)|\cF_{-\infty,
 \frac {k+n}2+3\max(2^i,2^j)})\\
 &\times\sum_{\ell(k)\leq l<\frac {k+n}2+2\max(2^i,2^j),\, l<k}(\zeta_k^{(2^j)}(l)-\zeta_k^{(2^{j-1})}(l))|\cG_n^{i,j}\big)\|_{2M}
 \nonumber\\
 &\leq 2\|E(\eta^{(2^i)}(k)-\eta^{(2^{i-1})}(k)|\cF_{-\infty,\frac {k+n}2+3\max(2^i,2^j)})\|_{4M}\nonumber\\
 &\times\|\sum_{\ell(k)\leq l<\frac {k+n}2+2\max(2^i,2^j),\, l<k}(\zeta_k^{(2^j)}(l)-\zeta_k^{(2^{j-1})}(l))\|_{4M}\nonumber\\
 &\leq 2\vp_{L,4M}(\frac {k-n}2-4\max(2^i,2^j))\|\eta^{(2^i)}(k)-\eta^{(2^{i-1})}(k)\|_{L}\nonumber\\
 &\times\|\sum_{\ell(k)\leq l<\frac {k+n}2+2\max(2^i,2^j), l<k}(\zeta_k^{(2^j)}(l)-\zeta_k^{(2^{j-1})}(l))\|_{4M}\nonumber
 \end{eqnarray}
 since $k-2^i-\frac {k+n}2-3\max(2^i,2^j)\geq\frac {k-n}2-4\max(2^i,2^j)$.
 If $0\leq k-n<8\max(2^i,2^j)$ then we obtain using (\ref{3.5}) and the Cauchy--Schwarz inequality that
 \begin{eqnarray}\label{3.8}
 &\| E(Q^{(1)}_{i,j,n}(k)|\cG_n^{i,j})\|_{2M}\leq 2\|\eta^{(2^i)}(k)-\eta^{(2^{i-1})}(k)\|_{4M}\\
 &\times\|\sum_{\ell(k)\leq l<\frac {k+n}2+2\max(2^i,2^j),\, l<k}(\zeta_k^{(2^j)}(l)-\zeta_k^{(2^{j-1})}(l))\|_{4M}\nonumber\\
 &\leq 4(\be(4M,2^i,)+\be(4M,2^{i-1}))\nonumber\\
 &\|\sum_{\ell(k)\leq l<\frac {k+n}2+2\max(2^i,2^j),l<k}(\zeta_k^{(2^j)}(l)-\zeta_k^{(2^{j-1})}(l))\|_{4M}.\nonumber
 \end{eqnarray}

 In order to bound the $L^{4M}$-norm of the last sum we will use Lemma \ref{lem3.1} setting $\cH_a=\cF_{-\infty,a+2^j}$.
 Taking into account the definition of $\vp$ we estimate for $l\geq a+2^{j+1}$,
 \[
 \| E(\zeta_k^{(2^j)}(l)-\zeta_k^{(2^{j-1})}(l)|\cH_a)\|_{4M}\leq 2\vp_{L,4M}(l-a-2^{j+1})(\be(L,2^j)+\be(L,2^{j-1})).
 \]
 For $a\leq l<a+2^{j+1}$ we just use the obvious estimate
 \[
 \| E(\zeta_k^{2^j)}(l)-\zeta_k^{(2^{j-1})}(l)|\cH_a)\|_{4M}\leq 2(\be(4M,2^j)+\be(4M,2^{j-1})).
 \]
 Hence,
 \begin{eqnarray*}
 &A^\zeta_{4M}=\sup_{a\geq 0}\sum_{l\geq a}\| E(\zeta_k^{2^j)}(l)-\zeta_k^{(2^{j-1})}(l)|\cH_a)\|_{4M}\\
 &\leq 2(\be(K,2^j)+\be(K,2^{j-1}))(\sum_{l=0}^\infty\vp_{L,4M}(l)+2^{j+1}),
 \end{eqnarray*}
 and so by Lemma \ref{lem3.1},
 \begin{eqnarray*}
 &\|\sum_{\ell(k)\leq l<\frac {k+n}2+2\max(2^i,2^j),\, l<k}(\zeta_k^{(2^j)}(l)-\zeta_k^{(2^{j-1})}(l))\|_{4M}\\
 &\leq(3(4M)!)^{1/4M}\sqrt dA^\zeta_{4M}\max_{m\leq k\leq n}\sqrt {k-l(k)}.
 \end{eqnarray*}
 This gives
 \begin{eqnarray}\label{3.9}
 &\|\sum_{k\geq n}|E(Q^{(1)}_{i,j,n}(k)|\cG_n^{i,j})\|_{2M}\leq 64(\be(K,2^i)\\
 &+\be(K,2^{i-1}))(\be(K,2^j)+\be(K,2^{j-1}))(3(4M)!)^{1/4M}\nonumber\\
 &\times\sqrt d\max(2^i,2^j)(1+\sum_{l=0}^\infty\vp_{L,4M}(l))^2\max_{m\leq s\leq n}\sqrt {s-l(s)}.\nonumber
 \end{eqnarray}

Next, similarly to the above, for $k-n\geq 8\max(2^i,2^j)$,
 \begin{eqnarray*}
 &\| E(Q^{(2)}_{i,j,n}(k)|\cG_n^{i,j})\|_{2M}\leq \sum_{\frac {k+n}2+2\max(2^i,2^j)\leq l<k}\\
 &\|  E(\vrho_{i,j}(l,k)|\cG_n^{i,j})\|_{2M}\\
 &\leq\vp_{L,2M}(\frac {k-n}2-4\max(2^i,2^j)) \sum_{\frac {k+n}2+2\max(2^i,2^j)\leq l<k}\|\vrho_{i,j}(l,k)\|_L\\
 &\leq\vp_{L,2M}(\frac {k-n}2-4\max(2^i,2^j))\|\eta^{(2^i)}(k)-\eta^{(2^{i-1})}(k)\|_{2L}\sum_{\frac {k+n}2+2\max(2^i,2^j)\leq l<k}\\
 &\|\zeta_k^{(2^j)}(l)-\zeta_k^{(2^{j-1})}(l))\|_{2L}\\
 &\leq 4(\be(K,2^i)+\be(K,2^{i-1}))(\be(K,2^j)\\
 &+\be(K,2^{j-1}))\vp_{L,2M}(\frac {k-n}2-4\max(2^i,2^j))|\frac {k-n}2-2\max(2^i,2^j)|
 \end{eqnarray*}
 since $E\vrho_{i,j}(l,k)=0$, each $\vrho_{i,j}(l,k)$ here is $\cF_{\frac {k+n}2-3\max(2^i,2^j)}$-measurable and $\frac {k+n}2-3\max(2^i,2^j)
 -n-\max(2^i,2^j)=\frac {k-n}2-4\max(2^i,2^j)$.
 For $k-n< 8\max(2^i,2^j)$ we use the simpler estimate
 \begin{eqnarray*}
 &\| E(Q^{(2)}_{i,j,n}(k)|\cG_n^{i,j})\|_{2M}\leq\sum_{\frac {k+n}2+2\max(2^i,2^j)\leq l<k}\|\vrho_{i,j}(l,k)\|_{2M}\\
 &\leq\|\eta^{(2^i)}(k)-\eta^{(2^{i-1})}(k)\|_{4M}\sum_{\frac {k+n}2+2\max(2^i,2^j)\leq l<k}\nonumber\\
 &\|\zeta_k^{(2^j)}(l)-\zeta_k^{(2^{j-1})}(l))\|_{4M}\\
 &\leq 48\max(2^i,2^j)(\be(K,2^i)+\be(K,2^{i-1}))(\be(K,2^j)+\be(K,2^{j-1}))
 \end{eqnarray*}
 since the sum here contains less than $\frac {k-n}2-2\max(2^i,2^j)\leq 6\max(2^i,2^j)$ terms. It follows that
 \begin{eqnarray}\label{3.10}
 &\|\sum_{k\geq n}|E(Q^{(2)}_{i,j,n}(k)|\cG_n^{i,j})\|_{2M}\\
 &\leq 8\max(2^i,2^j)(\be(K,2^i)+\be(K,2^{i-1}))(\be(K,2^j)\nonumber\\
 &+\be(K,2^{j-1}))\big(\sum_{l\geq 0}l\vp_{L,2M}(l)+24\max(2^i,2^j)\big).
 \nonumber\end{eqnarray}

Hence, by Lemma \ref{lem3.1},
\begin{equation}\label{3.11}
\|\max_{m\leq k\leq N}|\sum_{k=m}^nQ_{i,j}(k)|\|_{2M}\leq 2A_{2M}^{i,j}\big(3(2M)!d^{M}(N-m)^{M}+N-m\big)^{1/2M}
\end{equation}
where by the above
\begin{eqnarray*}
&A^{i,j}_{2M}=\sup_{0\leq n\leq N}\sum_{k\geq n}\| E(Q_{i,j}(k)|\cG_n^{i,j})\|_{2M}\\
&\leq 16\max(2^i,2^j)(\be(K,2^i)+\be(K,2^{i-1}))(\be(K,2^j)+\be(K,2^{j-1}))\\
&\times\big(4(3(4M)!)^{1/4M}\sqrt d(1+\sum_{l=0}^\infty\vp_{L,4M}(l))^2\max_{m\leq s\leq n}\sqrt {s-\ell(s)}\\
&+\sum_{l=0}^\infty l\vp_{L,4M}(l)+24\max(2^i,2^j)\big)
\end{eqnarray*}
and we observe that by (\ref{2.4}),
\begin{equation*}
\infty>\sum_{k=0}^\infty\sum_{l=k+1}^\infty\vp_{L,4M}(l)\geq\sum_{k=0}^\infty k\vp_{L,4M}(2k)\geq\frac 16\sum_{k=0}^\infty k\vp_{L,4M}(k)
\end{equation*}
and
\begin{equation*}
\infty>\sum_{k=0}^\infty\sum_{l=k+1}^\infty\be(K,l)\geq\sum_{k=0}^\infty k\be(K,2k)\geq\frac 16\sum_{k=0}^\infty k\be(K,k).
\end{equation*}
These together with
\[
\|\max_{m\leq n\leq N}|\bbR(m,n)|\|_{2M}\leq\sum_{i,j=0}^\infty\|\max_{m\leq n\leq N}|\sum_{k=m}^nQ_{i,j}(k)|\|_{2M}
\]
yield (\ref{3.4}) while (\ref{3.3}) is obtained by simplifying the proof above just by disregarding the sequence $\zeta_k(l)$
and taking $2M$ in place of $M$ in the proof which allows to estimate $L^{4M}$-norm of the sum of $\eta(k)$'s under our conditions.
\end{proof}

\subsection{Kolmogorov--Chentsov type theorem for multiplicative functionals}\label{subsec3.1+}
In the estimates for variational norms we will use the following extended version of the Kolmogorov theorem on the
 H\" older continuity of sample paths.

\begin{theorem}\label{hoelder}
For $1\leq\ell<\infty$ let $\bbX^{(\nu)}=\bbX^{(\nu)}(s,t),\,\nu=1,2,...,\ell;\, s\leq t$ be a two parameter
stochastic process which is a multiplicative functional in the sense of \cite{Lyo} and \cite{LQ}, i.e.
$\bbX^{(1)}(s,t)=\bbX^{(1)}(0,t)-\bbX^{(1)}(0,s)$ and for any $\nu=2,...,\ell$ and $0\leq s\leq u\leq t\leq T$,
\begin{equation}\label{ho1}
\bbX^{(\nu)}(s,t)=\bbX^{(\nu)}(s,u)+\sum_{k=1}^{\nu-1}\bbX^{(k)}(s,u)\otimes\bbX^{(\nu-k)}(u,t)+\bbX^{(\nu)}(u,t)
\end{equation}
where $\bbX^{(1)}(s,t)$ is a $d$-dimensional vector (or a vector in a Banach space) and $\otimes$ is a tensor
product in the corresponding space. Let $\bfM\geq\ell$ be an integer, $\be>1/\bfM$ and assume that whenever $0\leq s\leq t
\leq T<\infty$ and $1\leq\nu\leq\ell$,
\begin{equation}\label{ho2}
E\|\bbX^{(\nu)}(s,t)\|^\bfM\leq\bbC_{\ell,\bfM}|t-s|^{\nu\bfM\be}
\end{equation}
for some constants $\bbC_{\ell,\bfM}<\infty$, where $\|\cdot\|$ is the norm in a corresponding space satisfying
$\| a\otimes b\|\leq \| a\|\| b\|$. Then for all $\al\in[0,\be-\frac 1\bfM)$ there exists a modification of
$(\bbX^{(\nu)},\,\nu=1,...,\ell)$, which will have the same notation, and random variables $\bbK_\al^{(\nu)},
\,\nu=1,...,\ell$ such that
\begin{eqnarray}\label{ho3}
&E(\bbK_\al^{(\nu)})^{\bfM/\nu}\leq\bbC_{\ell,\bfM}C_{\al,\be,\bfM}<\infty\\
&\mbox{and}\quad\|\bbX^{(\nu)}(s,t)\|\leq\bbK_\al^{(\nu)}|t-s|^{\nu\al}
\quad\mbox{a.s.}\nonumber
\end{eqnarray}
where $C_{\al.\be,\bfM}>0$ depends only on $\al,\be$ and $\bfM$.
\end{theorem}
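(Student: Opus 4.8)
The plan is to argue by strong induction on the level $\nu$, reducing everything to the control of increments over dyadic intervals and then passing to a modification by continuity. Fix $\al\in[0,\be-\tfrac1\bfM)$ and on $[0,T]$ put $t^n_j=jT2^{-n}$, $D_n=\{t^n_j:0\le j\le 2^n\}$, $D=\bigcup_nD_n$. From (\ref{ho2}) and Lyapunov's inequality (since $\bfM/\nu\le\bfM$) one first gets the rescaled moment bound $E\|\bbX^{(\nu)}(s,t)\|^{\bfM/\nu}\le\bbC_{\ell,\bfM}^{1/\nu}|t-s|^{\bfM\be}$. Setting $K^{(\nu)}_n=\max_{1\le j\le2^n}\|\bbX^{(\nu)}(t^n_{j-1},t^n_j)\|$ and $R^{(\nu)}=\sup_{n\ge0}2^{n\nu\al}K^{(\nu)}_n$, a union bound over the $2^n$ subintervals gives $E(K^{(\nu)}_n)^{\bfM/\nu}\le\bbC_{\ell,\bfM}^{1/\nu}T^{\bfM\be}2^{-n(\bfM\be-1)}$, whence
\[
E(R^{(\nu)})^{\bfM/\nu}\le\sum_{n\ge0}2^{n\al\bfM}E(K^{(\nu)}_n)^{\bfM/\nu}\le\bbC_{\ell,\bfM}^{1/\nu}T^{\bfM\be}\sum_{n\ge0}2^{-n(\bfM\be-1-\al\bfM)}<\infty,
\]
the series converging precisely because $\al<\be-\tfrac1\bfM$. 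For $\nu=1$ the functional is additive, $\bbX^{(1)}(s,t)=\bbX^{(1)}(0,t)-\bbX^{(1)}(0,s)$, so the conclusion is the classical Kolmogorov--Chentsov theorem, producing $\bbK^{(1)}_\al$ with $\|\bbX^{(1)}(s,t)\|\le\bbK^{(1)}_\al|t-s|^\al$ and $E(\bbK^{(1)}_\al)^\bfM<\infty$; this is the base of the induction.

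For the inductive step assume the a.s. bounds $\|\bbX^{(k)}(s,t)\|\le\bbK^{(k)}_\al|t-s|^{k\al}$ for all $s,t\in D$ and all $k<\nu$, with $E(\bbK^{(k)}_\al)^{\bfM/k}<\infty$. The heart of the matter is a deterministic chaining estimate for $\|\bbX^{(\nu)}(s,t)\|$, $s,t\in D$. Given $s<t$ with $2^{-(m+1)}<(t-s)/T\le2^{-m}$, let $a\in(s,t)$ be the dyadic point of smallest level; then $[s,a]$ and $[a,t]$ each decompose greedily into consecutive dyadic intervals of strictly increasing level, all levels $\ge m+1$, i.e.\ into two ``monotone chains'' in which every level occurs at most once. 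Over a chain $s=\tau_0<\cdots<\tau_r=a$ the relation (\ref{ho1}), iterated, expands the level-$\nu$ component as
\[
\bbX^{(\nu)}(s,a)=\sum_{\substack{k_1+\cdots+k_r=\nu\\ k_i\ge0}}\bbX^{(k_1)}(\tau_0,\tau_1)\otimes\cdots\otimes\bbX^{(k_r)}(\tau_{r-1},\tau_r),\qquad \bbX^{(0)}\equiv1.
\]
Bounding each nonzero factor of level $k$ and scale $2^{-n}$ by $R^{(k)}2^{-nk\al}$ (legitimate for $k\le\nu$ by the definitions of $R^{(k)}$ and $K^{(k)}_n$) and using $\|a\otimes b\|\le\|a\|\|b\|$, the distinctness of levels turns the level sums into genuine geometric series: for each composition $(a_1,\dots,a_p)$ of $\nu$ into $p\le\nu$ positive parts,
\[
\sum_{m<\ell_1<\cdots<\ell_p}\prod_{j=1}^p2^{-\al a_j\ell_j}\le\prod_{j=1}^p\frac{2^{-\al a_j(m+1)}}{1-2^{-\al a_j}}\le(1-2^{-\al})^{-\nu}\,2^{-\al\nu(m+1)}.
\]
As there are only finitely many compositions of $\nu$, this yields $\|\bbX^{(\nu)}(s,a)\|\le C_{\al,\nu}\,P_\nu\big(R^{(1)},\dots,R^{(\nu)}\big)(t-s)^{\nu\al}$, where $P_\nu$ is a finite sum of monomials $\prod_jR^{(a_j)}$ with $\sum_ja_j=\nu$; the same holds on $[a,t]$, and one last application of (\ref{ho1}) across $a$ combines the two chains, giving the bound on $[s,t]$ with a possibly larger such polynomial.

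Defining $\bbK^{(\nu)}_\al=\sup_{s,t\in D}\|\bbX^{(\nu)}(s,t)\|/|t-s|^{\nu\al}$, the previous step gives $\bbK^{(\nu)}_\al\le C_{\al,\nu}P_\nu(R^{(1)},\dots,R^{(\nu)})$ pointwise, so it remains to bound the $\bfM/\nu$-moment of each monomial $\prod_jR^{(a_j)}$ with $\sum_ja_j=\nu$. Applying the generalized H\"older inequality with exponents $q_j=\nu/a_j$, which satisfy $\sum_j1/q_j=\sum_ja_j/\nu=1$, gives
\[
E\Big(\prod_jR^{(a_j)}\Big)^{\bfM/\nu}\le\prod_j\big(E(R^{(a_j)})^{\bfM/a_j}\big)^{a_j/\nu}<\infty,
\]
which is exactly where the prescribed exponent $\bfM/\nu$ is used and where the bookkeeping $\sum_ja_j=\nu$ pays off, yielding $E(\bbK^{(\nu)}_\al)^{\bfM/\nu}\le\bbC_{\ell,\bfM}C_{\al,\be,\bfM}$. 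Finally, since $\bbX^{(\nu)}$ restricted to $D$ is $\nu\al$-H\"older with finite constant $\bbK^{(\nu)}_\al$, it is uniformly continuous on $D$ and extends uniquely to a continuous process on $\{0\le s\le t\le T\}$ obeying the same bound (\ref{ho3}); this extension is a modification because the moment hypothesis together with (\ref{ho1}) makes the original $\bbX^{(\nu)}(s,t)$ continuous in probability, so it agrees a.s.\ with the dyadic limit at each fixed $(s,t)$.

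I expect the principal obstacle to be the chaining estimate of the second paragraph: the tensor cross-terms of the multiplicative relation (\ref{ho1}) produce products of lower-level increments, and one must organize the dyadic decomposition so that the many-term level sums do not proliferate with the refinement $N$ — resolved by splitting into monotone chains, where each level appears at most once and the sums are genuinely geometric — while \emph{simultaneously} keeping the resulting products of lower-level constants integrable, which is secured by the H\"older bookkeeping $\sum_ja_j=\nu$ matched to the moment exponent $\bfM/\nu$.
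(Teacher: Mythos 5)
Your proof is correct and shares the paper's standard dyadic skeleton: dyadic-level maxima (your $K_n^{(\nu)}$, the paper's $\bbL_n^{(\nu)}$) controlled by a union bound, with the condition $\al<\be-\frac 1\bfM$ entering exactly where the series over levels must converge; chaining between dyadic points; and extension to a modification by uniform continuity on the dyadics plus continuity in probability from (\ref{ho1})--(\ref{ho2}). Where you genuinely diverge is the one step the paper itself flags as nonstandard, namely how the cross-terms of (\ref{ho1}) are closed across levels. The paper decomposes $[s,t)$ into a single string of dyadic intervals (each length occurring at most twice), derives for the normalized running maxima the recursive inequality $\bfA^{(\nu)}(s,t)\le 2\bfB_m+2\bfB_m\sum_{k=1}^{\nu-1}\bfA^{(k)}(s,t)$, and closes it with the discrete Gronwall inequality, so that $\bbK_\al^{(\nu)}=2\bfB_m(1+2\bfB_m)^{\nu-1}$ is a function of the \emph{single} random variable $\bfB_m=\sup_{k}2^{k\al(m+1)}\sum_{q\ge m+1}\bbL_q^{(k)}\in L^\bfM$, and the moment bound $E(\bbK_\al^{(\nu)})^{\bfM/\nu}$ follows at once. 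You instead expand $\bbX^{(\nu)}$ multiplicatively along two monotone chains into the full sum over compositions $k_1+\cdots+k_r=\nu$, exploit that each dyadic level occurs at most once per chain to turn the level sums into genuine geometric series per composition, and then control $E\big(\prod_j R^{(a_j)}\big)^{\bfM/\nu}$ by generalized H\"older with exponents $\nu/a_j$; your verification that $E(R^{(a)})^{\bfM/a}<\infty$ under precisely $\al<\be-\frac1\bfM$, uniformly in $a$, is correct, and the homogeneity bookkeeping $\sum_j a_j=\nu$ matched to the exponent $\bfM/\nu$ is exactly what the paper achieves implicitly through the weights $2^{k\al(m+1)}$ inside $\bfB_m$. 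Your route avoids Gronwall at the price of combinatorial expansion and H\"older; the paper's route avoids both and gives a cleaner constant. (A cosmetic point: your ``induction on $\nu$'' is not really used --- the chain bound invokes only $R^{(k)}$, $k\le\nu$, which you control directly from (\ref{ho2}), so the inductive hypothesis never enters.)

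One boundary case fails as written: the theorem allows $\al=0$, and there your geometric-series factors $(1-2^{-\al a_j})^{-1}$, hence $(1-2^{-\al})^{-\nu}$, blow up, while the chains may contain arbitrarily many intervals (dyadic $s,t$ of arbitrarily deep level), so the level sums genuinely diverge at $\al=0$. The paper's proof covers $\al=0$ directly, since $\sum_{q\ge m+1}\|\bbL_q^{(k)}\|_{L^\bfM}$ converges as soon as $k\be>\frac1\bfM$. The repair on your side is one line: for $\al=0$ apply your result with any $\al'\in(0,\be-\frac 1\bfM)$ and set $\bbK_0^{(\nu)}=T^{\nu\al'}\bbK_{\al'}^{(\nu)}$, using $|t-s|^{\nu\al'}\le T^{\nu\al'}$.
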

\begin{proof}
For $\ell=2$ this assertion was proved as Theorem 3.1 in \cite{FH}. We will proceed by a similar (actually, standard)
method except for the last step. Without loss of generality take $T=1$ (returning back to the original $T$ by time rescaling). Let $D_n$ be the set of integer multiples of $2^{-n}$ in $[0,1)$ and $D=\cup_{n\geq 1}D_n$. Set
$\bbL_n^{(\nu)}=\max_{t\in D_n}\|\bbX^{(\nu)}(t,t+2^{-n})\|$. By (\ref{ho2}),
\begin{equation}\label{ho4}
E(\bbL_n^{(\nu)})^\bfM\leq\sum_{t\in D_n}E\|\bbX^{(\nu)}(t,t+2^{-n})\|^\bfM\leq\bbC_{\ell,\bfM}2^{-n(\nu\bfM\be-1)}.
\end{equation}
Fix $s<t$ in $D$ and choose $m$ so that $2^{-(m+1)}<t-s\leq 2^{-m}$. The interval $[s,t)$ can be represented
as the finite disjoint union of intervals of the form $[\tau_l,\tau_{l+1})$ with $\tau_l,\tau_{l+1}\in D_n$ and $\tau_{l+1}-\tau_{l}=2^{-n} $
 for $n\geq m+1$ where no three intervals have the same length and $s=\tau_0<\tau_1<...<\tau_R=t$. Observe that by (\ref{ho1}) for any $r=1,2,...,R$,
\begin{eqnarray}\label{ho5}
&\bbX^{(\nu)}(s,\tau_r)=\sum_{q=0}^{r-1}\bbX^{(\nu)}(\tau_q,\tau_{q+1})+\sum_{q=1}^{r-1}\sum_{k=1}^{\nu-1}\\
&\bbX^{(k)}(s,\tau_q)\otimes\bbX^{(\nu-k)}(\tau_q,\tau_{q+1}).\nonumber
\end{eqnarray}
Set $A^{(k)}(s,t)=\max_{1\leq r\leq R}\|\bbX^{(k)}(s,\tau_r)\|$ and $B^{(k)}_m=\sum_{q\geq m+1}\bbL_q^{(k)}$.
Then by (\ref{ho5}),
\[
A^{(\nu)}(s,t)\leq 2B^{(\nu)}_m+2\sum_{k=1}^{\nu-1}A^{(k)}(s,t)B_m^{(\nu-k)}.
\]
Set
\[
\bfA^{(k)}(s,t)=\frac {A^{(k)}(s,t)}{|t-s|^{k\al}}\,\,\mbox{and}\,\, \bfB_m=\sup_{1\leq k\leq \ell}(2^{k\al(m+1)}B^{(k)}_m).
\]
Then dividing the above inequality by $|t-s|^{\nu\al}$ we obtain
\[
\bfA^{(\nu)}(s,t)\leq 2\bfB_m+2\bfB_m\sum_{k=1}^{\nu-1}\bfA^{(k)}(s,t).
\]
It follows by the discrete Gronwall inequality (see \cite{Cla}) that
\[
\bfA^{(\nu)}(s,t)\leq 2\bfB_m(1+2\bfB_m)^{\nu-1},
\]
and so
\[
\|\bbX^{(\nu)}(s,t)\|\leq\bbK_\al^{(\nu)}|t-s|^{\nu\al}\,\,\mbox{where}\,\,\bbK_\al^{(\nu)}=2\bfB_m(1+2\bfB_m)^{\nu-1}.
\]
Then
\[
E(\bbK_\al^{(\nu)})^{\frac \bfM{\nu}}\leq 2^{\bfM}(E\bfB_m^{\bfM/\nu}+2^\bfM E\bfB_m^\bfM)<\infty
\]
and by (\ref{ho4}),
\begin{eqnarray*}
&\|\bfB_m\|_{L^\bfM}\leq\sum_{1\leq k<\ell+1}2^{k\al(m+1)}\| B^{(k)}_m\|_{L^\bfM}
\leq\sum_{1\leq k\leq \ell}2^{k\al(m+1)}\sum_{q\geq m+1}\|\bbL_q^{(k)}\|_{L^\bfM}\\
&\leq \bbC^{1/\bfM}_{\ell ,\bfM}\sum_{1\leq k\leq\ell} 2^{-(m+1)(k(\be-\al)-\frac 1\bfM)}(1-2^{-(k\be-\frac 1\bfM)})^{-1}
<\infty.
\end{eqnarray*}
This gives (\ref{ho3}) for diadic $s,t\in[0,1]$. The extension to all real $s,t\in[0,1]$ follows by continuity since by (\ref{ho1}) and (\ref{ho3}) for any diadic sequences $t_n\to t$ and $s_n\to s$, $s,t,s_n,t_n\in[0,1]$ the sequence
$\bbX^{(\nu)}(s_n,t_n)$ is the Cauchy one. The limit of $\bbX^{(\nu)}(s_n,t_n)$ coincides with $\bbX^{(\nu)}(s,t)$
almost surely since $\bbX^{(\nu)}(s_n,t_n)\to\bbX^{(\nu)}(s,t)$ in probability in view of (\ref{ho1}) and (\ref{ho2}),
and so the modification comes into the picture.
\end{proof}

In the discrete time case we will use the following corollary from the above theorem (cf. Proposition 3.9 in \cite{CFKMZ}).
\begin{proposition}\label{hoelder2}
For each integer $N\geq 1$ let $(X_N,\bbX_N)=(X_N(s,t),\,\bbX_N(s,t))$ be a pair of two parameter processes
in $\bbR^d$ and $\bbR^d\otimes\bbR^d$, respectively, defined for each $s=k/N$ and $t=l/N$ where $0\leq k\leq l
\leq TN$, $T>0$ and $X_N(s,s)=\bbX_N(s,s)=0$. Assume that the Chen relation
\begin{equation}\label{hoe1}
\bbX_N(\frac kN,\frac mN)=\bbX_N(\frac kN,\frac lN)+X_N(\frac kN,\frac lN)\otimes X_N(\frac lN,\frac mN)
+\bbX_N(\frac lN,\frac mN)
\end{equation}
and $X_N(\frac kN,\frac lN)=X_N(0,\frac lN)-X_N(0,\frac kN)$ holds true for any $0\leq k\leq l\leq m\leq TN$.
Suppose that for all $0\leq k\leq l\leq NT$ and some $\bfM\geq 1$, $\ka>\frac 1{\bfM}$, $C(\bfM)>0$ and
$\bbC(\bfM)>0$ (which do not depend on $k,l$ and $N$),
\begin{eqnarray}\label{hoe2}
&E|X_N(\frac kN,\frac lN)|^{\bfM}\leq C(\bfM)|\frac {l-k}N|^{\bfM\ka}\,\,\mbox{and}\\
&E|\bbX_N(\frac kN,\frac lN)|^{\bfM}\leq \bbC(\bfM)|\frac {l-k}N|^{2\bfM\ka}.\nonumber
\end{eqnarray}
Then for any $\be\in(\frac 1{\bfM},\ka)$ there exist random variables $C_{\ka,\be,N}>0$ and
$\bbC_{\ka,\be,N}>0$ such that
\begin{eqnarray}\label{hoe3}
&|X_N(\frac kN,\frac lN)|\leq C_{\ka,\be,N}|\frac {l-k}N|^{\ka-\be}\,\,\mbox{and}\\
&|\bbX_N(\frac kN,\frac lN)|\leq \bbC_{\ka,\be,N}|\frac {l-k}N|^{2(\ka-\be)}\,\,\mbox{a.s.}\nonumber
\end{eqnarray}
and
\begin{eqnarray}\label{hoe4}
EC^{\bfM}_{\ka,\be,N}\leq K_{\ka,\be}(\bfM)<\infty\,\,\,\mbox{and}\,\,\, E\bbC^{\bfM/2}_{\ka,\be,N}\leq\bbK_{\ka,\be}(\bfM)<\infty
\end{eqnarray}
where $K_{\ka,\be}(\bfM)>0$ and $\bbK_{\ka,\be}(\bfM)>0$ do not depend on $N$.
\end{proposition}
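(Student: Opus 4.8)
The plan is to deduce this from Theorem \ref{hoelder}. Observe first that the Chen relation (\ref{hoe1}) together with $X_N(\frac kN,\frac lN)=X_N(0,\frac lN)-X_N(0,\frac kN)$ says precisely that $(X_N,\bbX_N)$ is a step-$2$ multiplicative functional in the sense of (\ref{ho1}) with $\ell=2$, and that the moment bounds (\ref{hoe2}) are exactly the hypotheses (\ref{ho2}) with $\bfM$ unchanged and $\be$ replaced by $\ka$. The only discrepancy is that $(X_N,\bbX_N)$ is defined merely on the grid $\{\frac kN:0\le k\le TN\}$ rather than for all real $s\le t$. So the core of the argument is to extend it, for each fixed $N$, to a genuine two-parameter multiplicative functional $(X_N^c,\bbX_N^c)$ on all of $[0,T]$ that agrees with the given data at grid points and still obeys (\ref{ho2}) with constants independent of $N$; Theorem \ref{hoelder} then applies verbatim and, restricted back to the grid, yields (\ref{hoe3})--(\ref{hoe4}).

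To build the extension I would encode each pair $(X_N(s,t),\bbX_N(s,t))$ as an element $g(s,t)$ of the free step-$2$ nilpotent group $G^{(2)}(\bbR^d)$, for which the group law is exactly the Chen relation $g(s,u)\,g(u,t)=g(s,t)$ and the first-level projection $\pi_1$ is an additive homomorphism. On each cell $[\frac kN,\frac{k+1}N]$ I interpolate by the group dilation: for $\te=N(t-\frac kN)\in[0,1]$ set $g(\frac kN,t)=\delta_\te\big(g(\frac kN,\frac{k+1}N)\big)$, and define $g(s,t):=g(\frac kN,s)^{-1}g(\frac kN,t)$ within a cell and by multiplicativity across cells. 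By construction this is a continuous $G^{(2)}(\bbR^d)$-valued multiplicative functional (so (\ref{ho1}) holds for all real $s\le t$), it reproduces the given values at grid points, and the first level is automatically an honest increment because $\pi_1$ is a homomorphism. A direct computation shows that within a cell the first level is linear and the second level scales quadratically in $\te$; since any sub-increment of a cell has $|t-s|\le 1/N$, applying (\ref{hoe2}) with $l-k=1$ and collecting powers of $N$ makes the $N$-factors cancel, and one obtains $E|X_N^c(s,t)|^{\bfM}\le C_1|t-s|^{\bfM\ka}$ and $E|\bbX_N^c(s,t)|^{\bfM}\le C_2|t-s|^{2\bfM\ka}$ with $C_1,C_2$ not depending on $N$; the case of general $s\le t$ straddling several cells follows by splitting off the two boundary pieces and the intervening grid increment and using Minkowski's inequality.

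With (\ref{ho2}) in hand for $(X_N^c,\bbX_N^c)$, I apply Theorem \ref{hoelder} with $\ell=2$, $\be\to\ka$ and $\al=\ka-\be\in(0,\ka-\frac1\bfM)$ (the admissible range, since $\be>\frac1\bfM$), producing random variables $\bbK_\al^{(1)},\bbK_\al^{(2)}$ bounded in $L^{\bfM}$ and $L^{\bfM/2}$ respectively by $N$-independent constants and satisfying $\|X_N^c(s,t)\|\le\bbK_\al^{(1)}|t-s|^{\al}$, $\|\bbX_N^c(s,t)\|\le\bbK_\al^{(2)}|t-s|^{2\al}$ a.s.; setting $C_{\ka,\be,N}=\bbK_\al^{(1)}$ and $\bbC_{\ka,\be,N}=\bbK_\al^{(2)}$ and restricting to grid points gives exactly (\ref{hoe3}) and (\ref{hoe4}). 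The step I expect to be the main obstacle is the uniform-in-$N$ control of the second level of the lift: within a cell the Chen relation forces a cross term $X_N^c\otimes X_N^c$ whose $L^{\bfM}$-norm a priori needs $2\bfM$ moments of $X_N$, which (\ref{hoe2}) does not grant. I would handle this either through the shuffle identity $|X_N(s,t)|^2\le 2|\bbX_N(s,t)|$ available for our iterated sums and integrals, or --- more robustly and without any extra hypothesis --- by bypassing the continuous lift altogether and rerunning the dyadic-chaining estimate of Theorem \ref{hoelder} directly on the rescaled integer grid, where exactly the same bookkeeping goes through because that argument invokes only the level-$1$ and level-$2$ moments at the single exponent $\bfM$ and absorbs the quadratic Gronwall term into the $\bfM/2$-power carried by the second-level constant.
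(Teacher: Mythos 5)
Your overall architecture coincides with the paper's: extend $(X_N,\bbX_N)$ from the grid to all of $[0,T]$ so that the Chen relation (\ref{ho1}) survives, invoke Theorem \ref{hoelder} with $\ell=2$, and restrict back to the grid points. The paper performs the extension by \emph{linear} interpolation of $t\mapsto X_N(0,t)$ and $t\mapsto\bbX_N(0,t)$ (following Proposition 6.17 of \cite{FZ}), and then sets $\tilde\bbX_N(s,t)=\tilde\bbX_N(t)-\tilde\bbX_N(s)-\tilde X_N(s)\otimes\tilde X_N(s,t)$, which makes Chen automatic --- the same mechanism as your prescription $g(s,t)=g(\frac kN,s)^{-1}g(\frac kN,t)$, so your dilation lift is essentially a variant of the paper's step. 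One inaccuracy: a general Chen pair need not be group-like --- the discrete iterated sums in this paper violate the shuffle identity because of diagonal terms --- so your lift must be taken in the full truncated tensor group $T^{(2)}_1(\bbR^d)$ rather than in $G^{(2)}(\bbR^d)$; with that change the construction goes through verbatim.

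More importantly, you correctly put your finger on the real subtlety: inside a cell any Chen-compatible interpolation produces a cross term proportional to $X_N(\frac kN,\frac{k+1}N)\otimes X_N(\frac kN,\frac{k+1}N)$ (this is visible explicitly in the paper's linear extension), and its $L^{\bfM}$-norm would require $2\bfM$ moments of the level-one increments, which (\ref{hoe2}) does not provide. The paper's proof passes over this by citing \cite{FZ}, where the second level is measured in $L^{q/2}$; that reading is consistent with the conclusion (\ref{hoe4}), which only asserts $E\bbC^{\bfM/2}_{\ka,\be,N}<\infty$, and with all applications in Sections \ref{sec4}--\ref{sec6}, where with $\bfM=4M$ the second level is controlled only in $L^{2M}$. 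Under that convention the cross term is handled by Cauchy--Schwarz, $E|X\otimes X|^{\bfM/2}=E|X|^{\bfM}$, and the interpolation route closes (for $\ka\le\frac12$, which holds in every application here). Of your two proposed repairs, the first is unsound: the inequality $|X_N(s,t)|^2\le 2|\bbX_N(s,t)|$ needs the shuffle identity, which fails for the discrete sums (the symmetric part of $\bbS_N$ differs from $\frac12 S_N\otimes S_N$ by a diagonal sum), and it would in any case narrow the proposition, which is stated for arbitrary Chen pairs. Your second repair --- rerunning the dyadic chaining of Theorem \ref{hoelder} directly on the (rescaled) grid --- is correct and is in fact the cleanest resolution: in that argument products of increments are absorbed pathwise through the Gronwall step, moments are taken only of the level-wise dyadic maxima $\bbL^{(\nu)}_n$, so precisely the exponents granted by (\ref{hoe2}) suffice, the finitely many scales $2^{-n}\ge 1/N$ yield constants uniform in $N$, and no restriction $\ka\le\frac12$ is incurred. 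Since you identified the obstacle and supplied a valid way around it, your proposal stands, and its diagnosis is a useful gloss on a point the paper's own proof leaves implicit.
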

\begin{proof} First, we will extend the definition of $X_N$ and $\bbX_N$ to the whole interval $[0,T]$
preserving the Chen relations similarly to Proposition 6.17 from \cite{FZ} and then will use Theorem \ref{hoelder}.
Set $\tilde X_N(t)=X_N(0,k/N)$ and $\tilde\bbX_N(t)=\bbX_N(0,k/N)$ if $t=k/N$, $k=0,1,...,N$ and for any
$t\in[\frac kN,\frac {k+1}N)$ we extend the definition by the linear interpolation
\begin{eqnarray*}
&\tilde X_N(t)=(k+1-tN)X_N(0,\frac kN)+(tN-k)X_N(0,\frac {k+1}N)\,\,\mbox{and}\\
&\tilde\bbX_N(t)=(k+1-tN)\bbX_N(0,\frac kN)+(tN-k)\bbX_N(0,\frac {k+1}N).
\end{eqnarray*}
Now set $\tilde X_N(s,t)=\tilde X_N(t)-\tilde X_N(s)$ and
\[
\tilde\bbX_N(s,t)=\tilde\bbX_N(t)-\tilde\bbX_N(s)-\tilde X_N(s)\otimes\tilde X_N(s,t).
\]
By this definition we check directly that the pair $(\tilde X_N,\tilde\bbX_N)$ satisfies the Chen relation
\begin{equation}\label{hoe5}
\tilde\bbX_N(s,t)=\tilde\bbX_N(s,u)+\tilde X_N(s,u)\otimes\tilde X_N(u,t)+\tilde\bbX_N(u,t),\quad s\leq u\leq t.
\end{equation}

Next, in the same way as in Proposition 6.17 in \cite{FZ}, we see from (\ref{hoe1}) and (\ref{hoe5}) that
\begin{eqnarray}\label{hoe6}
&E|\tilde X_N(s,t)|^{\bfM}\leq\tilde C(\bfM)|t-s|^{\bfM\ka}\,\,\mbox{and}\,\,
&E|\tilde\bbX_N(s,t)|^{\bfM}\leq\tilde\bbC(\bfM)|t-s|^{2\bfM\ka}\nonumber
\end{eqnarray}
where $\tilde C(\bfM)>0$ and $\tilde\bbC(\bfM)>0$ depend only on $C(\bfM),\,\bbC(\bfM), \bfM$ and $\ka$. Now we can use
Theorem \ref{hoelder} to conclude that there exist modifications of $\tilde X_N$ and $\tilde\bbX_N$ denoted
by the same letter such that
\begin{eqnarray}\label{hoe7}
&E\sup_{0\leq s<t\leq T}\frac {|\tilde X_N(s,t)|^{\bfM}}{|t-s|^{\bfM(\ka-\be)}}\leq K_{\ka,\be}(\bfM)\,\,\mbox{and}\\
&E\sup_{0\leq s<t\leq T}\frac {|\tilde\bbX_N(s,t)|^{\bfM}}{|t-s|^{2\bfM(\ka-\be)}}\leq\bbK_{\ka,\be}(\bfM)\nonumber
\end{eqnarray}
for some numbers $K_{\ka,\be}(\bfM)>0$ and $\bbK_{\ka,\be}(\bfM)>0$ which do not depend on $N$. Since a.s.
$\tilde X_N(k/N,\, l/N)=X_N(k/N,\, l/N)$ and $\tilde\bbX_N(k/N,\, l/N)=\bbX_N(k/N,\, l/N)$ for all
$0\leq k\leq l\leq N$ we conclude that (\ref{hoe3}) and (\ref{hoe4}) hold true, completing the proof.
\end{proof}
We observe that this result can be extended to higher order piecewise constant rough paths making them continuous
by linear interpolation and using Theorem \ref{hoelder} but we will not need this extension here. Note though that
linear interpolations lead to processes with infinite $q$-variational norms for $q<1$, and so such extensions will
not be appropriate for our goals in Section \ref{sec7}.

\subsection{The matrix $\vs$ and characteristic functions estimates}\label{subsec3.2}

In the following lemma we will justify the definition of the matrix $\vs$ and obtain the necessary convergence
estimates.
\begin{lemma}\label{lem3.3}
For each $i,j=1,...,d$ the limit
\begin{equation}\label{3.12}
\vs_{ij}=\lim_{n\to\infty}\frac 1n\sum_{k=0}^n\sum_{m=0}^nE(\xi_i(k)\xi_j(m))
\end{equation}
exists and for any $m,n\geq 0$,
\begin{eqnarray}\label{3.13}
&|n\vs_{ij}-\sum_{k=m}^{m+n}\sum_{l=m}^{m+n}E(\xi_i(k)\xi_j(l))|\\
&\leq 6\sum_{k=0}^n\sum_{l=k+1}^\infty\big((\|\xi_i(0)\|_K+\|\xi_j(0)\|_K)\be(K,l)+(\|\xi_i(0)\|_K\|\xi_j(0)\|_K)\vp_{L,4M}(l)\big).\nonumber
\end{eqnarray}
Similarly, the limit
\[
\Gam^{ij}=\lim_{n\to\infty}E\bbS_N(n)=\lim_{n\to\infty}\frac 1n\sum_{l=1}^n\sum_{m=0}^{l-1}E(\xi_i(m)\xi_j(l))=
\sum_{l=1}^\infty E((\xi_i(0)\xi_j(l))
\]
exists and
\begin{eqnarray*}
&|n\Gam^{ij}-\sum_{l=1}^{n}\sum_{m=0}^{l-1}E(\xi_i(m)\xi_j(l))|\\
&\leq 3\sum_{k=0}^n\sum_{l=k+1}^\infty\big((\|\xi_i(0)\|_K+\|\xi_j(0)\|_K)\be(K,l)+(\|\xi_i(0)\|_K\|\xi_j(0)\|_K)
\vp_{L,4M}(l)\big).\nonumber
\end{eqnarray*}
\end{lemma}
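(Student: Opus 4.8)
The core of the lemma is the existence of the limit defining $\vs_{ij}$ together with the quantitative rate \eqref{3.13}; the statement about $\Gam^{ij}$ is then a one-sided variant proved by the same mechanism. My plan is to control the difference between the finite double Ces\`aro-type average and its candidate limit by reorganizing the double sum along diagonals and showing that the off-diagonal covariances $\vs_{ij}(l) = E(\xi_i(0)\xi_j(l))$ are summable, with tails governed exactly by the two quantities appearing on the right-hand side of \eqref{3.13}. First I would establish the basic covariance decay estimate: for $l\geq 1$, writing $\xi(l) = E(\xi(l)\mid \cF_{l-\lfloor l/2\rfloor, l+\lfloor l/2\rfloor}) + (\xi(l) - E(\xi(l)\mid\cdots))$, the conditioned part is measurable on a $\sig$-algebra separated from $\cF_{-\infty,0}$ by a gap of order $l/2$, so I can apply the mixing bound via $\vp_{L,4M}$ (using \eqref{2.1}-\eqref{2.2}), while the remainder is bounded by $\be(K,\cdot)$ and handled with Cauchy--Schwarz. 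This yields the pointwise estimate
\[
|E(\xi_i(k)\xi_j(l))| \leq 2(\|\xi_i(0)\|_K + \|\xi_j(0)\|_K)\be(K,\lfloor |l-k|/2\rfloor) + (\|\xi_i(0)\|_K\|\xi_j(0)\|_K)\vp_{L,4M}(\lfloor |l-k|/2\rfloor),
\]
whose summability in $|l-k|$ is precisely \eqref{2.4}.

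Next I would write the double sum $\sum_{k=m}^{m+n}\sum_{l=m}^{m+n}E(\xi_i(k)\xi_j(l))$ using stationarity, which forces $E(\xi_i(k)\xi_j(l)) = \vs_{ij}(l-k)$ to depend only on $l-k$. Collecting terms by the diagonal $r=l-k$, the diagonal $r$ is hit exactly $n+1-|r|$ times, so the double sum equals $\sum_{|r|\leq n}(n+1-|r|)\vs_{ij}(r)$, independent of the shift $m$. The candidate limit is then $\vs_{ij} = \sum_{r=-\infty}^{\infty}\vs_{ij}(r)$, and the error is
\[
\Big|n\vs_{ij} - \sum_{|r|\leq n}(n+1-|r|)\vs_{ij}(r)\Big| \leq \sum_{|r|\leq n}|r|\,|\vs_{ij}(r)| + n\sum_{|r|>n}|\vs_{ij}(r)| + \sum_{|r|\leq n}|\vs_{ij}(r)|.
\]
Both the weighted sum $\sum |r|\,|\vs_{ij}(r)|$ and the tail $n\sum_{|r|>n}|\vs_{ij}(r)|$ are controlled by $\sum_{r}|r|\,|\vs_{ij}(r)|$, which by the pointwise estimate above is bounded by a constant times $\sum_{k}\sum_{l>k}(\be(K,l)+\vp_{L,4M}(l))$; the factor $6$ and the double-sum form on the right of \eqref{3.13} arise from the change of summation $\sum_r |r|\,(\cdot) \asymp \sum_k\sum_{l>k}(\cdot)$ together with the argument-halving in the mixing estimate. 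Existence of the limit \eqref{3.12} is then immediate, since the right side of \eqref{3.13} is finite and the prefactor is $O(n^{-1})$ after division.

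The $\Gam^{ij}$ part is the one-sided analogue: here one sums only over $m<l$, i.e.\ over positive diagonals $r=l-m\geq 1$, giving $\sum_{r=1}^{n}(n+1-r)\vs_{ij}(r)$ with candidate limit $\sum_{r\geq 1}\vs_{ij}(r)$, and the same estimate produces the constant $3$ instead of $6$ because only half the diagonals appear. \textbf{The main obstacle} I anticipate is not the combinatorics of the diagonal counting but obtaining the decorrelation bound for $E(\xi_i(k)\xi_j(l))$ cleanly under the general $\vp_{L,4M}$-mixing and $\be(K,\cdot)$-approximation hypotheses rather than the more standard $\al$- or $\phi$-mixing: one must split each factor into its conditional expectation on a finite window plus a tail, bound the tail in $L^K$ by $\be(K,\cdot)$, and then invoke the definition \eqref{2.1} of $\vp_{L,4M}$ for the remaining correlation, being careful that the H\"older exponents $L$ and $4M$ and the gap $\lfloor l/2\rfloor$ are compatible with the moment assumption $E|\xi(0)|^K<\infty$ with $K=\max(2L,8M)$. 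Once this decorrelation estimate is in hand, everything else is routine summation.
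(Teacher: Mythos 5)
Your overall route coincides with the paper's: a pointwise decorrelation bound for $E(\xi_i(k)\xi_j(l))$ obtained by splitting the factors into conditional expectations on finite windows plus $\be(K,\cdot)$-small remainders handled by Cauchy--Schwarz, then stationarity, reorganization along diagonals, and tail summation, with the $\Gam^{ij}$ part as the one-sided analogue. The one step that fails as written is the window bookkeeping in the decorrelation estimate. Since $\xi_i(k)$ is \emph{not} assumed $\cF_{-\infty,k}$-measurable (only the approximation rate (\ref{2.3}) is available), you must, as you yourself note in the final paragraph, replace \emph{both} factors by conditional expectations $\xi_i^{(w)}(k)=E(\xi_i(k)\mid\cF_{k-w,k+w})$ and $\xi_j^{(w)}(l)=E(\xi_j(l)\mid\cF_{l-w,l+w})$. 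But with your choice $w=\lfloor |l-k|/2\rfloor$ the two windows touch: for $k<l$ the first conditioned factor is only $\cF_{-\infty,\,k+w}$-measurable with $k+w\approx (k+l)/2$, while the second is $\cF_{l-w,\,\infty}$-measurable with $l-w\approx (k+l)/2$, so the separation between the two $\sig$-algebras is $O(1)$, and $\vp_{L,4M}$ gets evaluated at a bounded gap, yielding no decay in $|l-k|$. Consequently the bound you claim, with the term $\vp_{L,4M}(\lfloor |l-k|/2\rfloor)$, does not follow from the construction you describe.

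The repair is minor and is exactly the paper's choice: take half-width $\frac 13|k-l|$ (anything strictly below $\frac12|k-l|$ works, e.g.\ $\lfloor |k-l|/4\rfloor$). Then, using $E\xi_i(k)=0$ and conditioning the far factor on $\cF_{-\infty,\,l+\frac 13|k-l|}$, the definition (\ref{2.1}) produces the mixing term $\vp_{L,4M}(\frac 13|k-l|)\,\|\xi_i(0)\|_K\|\xi_j(0)\|_K$ with a genuine gap $\frac 13|k-l|$, while the two remainder terms are bounded by $\be(K,\frac 13|k-l|)$ times the corresponding $K$-norms; this is precisely estimate (\ref{3.14}) in the paper. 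With that replacement, the rest of your plan --- the diagonal count $\sum_{|r|\leq n}(n+1-|r|)\vs_{ij}(r)$, the control of the error by $\sum_{k}\sum_{l>k}(\be(K,l)+\vp_{L,4M}(l))$ via monotonicity of the coefficients, and the one-sided version for $\Gam^{ij}$ with the halved constant --- goes through verbatim and matches the paper's proof.
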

\begin{proof}
By the Cauchy--Schwarz inequality and the definition of the coefficients $\be$ and $\vp$ we have for $k>l$,
\begin{eqnarray}\label{3.14}
&|E(\xi_i(k)\xi_j(l))|\leq |E\big((\xi_i(k)-E(\xi_i(k)|\cF_{k-\frac 13|k-l|,k+\frac 13|k-l|}))\xi_j(l)\big)|\\
&+|E\big(E(\xi_i(k)|\cF_{k-\frac 13|k-l|,k+\frac 13|k-l|})(\xi_j(l)-E(\xi_j(l)|\cF_{l-\frac 13|k-l|,l+\frac 13|k-l|})\big)|
\nonumber\\
&+|E\big(E(\xi_i(k)|\cF_{k-\frac 13|k-l|,k+\frac 13|k-l|})E(\xi_j(l)|\cF_{l-\frac 13|k-l|,l+\frac 13|k-l|})\big)|\nonumber\\
&\leq\be(2M,\frac 13|k-l|)\|\xi_j(l)\|_{2M}+\|\xi_i(k)\|_{2M}\be(2M,\frac 13|k-l|)\nonumber\\
&+|E\big(E(\xi_i(k)|\cF_{k-\frac 13|k-l|,k+\frac 13|k-l|})|\cF_{-\infty,l+\frac 13|k-l|})E(\xi_j(l)|\cF_{l-\frac 13|k-l|,l+\frac 13|k-l|})\big)|\nonumber\\
&\leq \be(K,\frac 13|k-l|)\|\xi_j(0)\|_{K}+\|\xi_i(0)\|_{K}\be(K,\frac 13|k-l|)\nonumber\\
&+\vp_{L,4M}(\frac 13|k-l|)\|\xi_i(0)\|_K\|\xi_j(0)\|_{K}.
\nonumber\end{eqnarray}
Now by the stationarity,
\[
\sum_{k=m}^{m+n}\sum_{l=m}^{m+n}E(\xi_i(k)\xi_j(l))=(n+1)E(\xi_i(0)\xi_j(0))+\sum_{k=1}^n\sum_{m=1}^kE(\xi_i(m)\xi_j(0)+\xi_i(0)\xi_j(m)).
\]
It follows by (\ref{2.4}), (\ref{2.7}) and (\ref{3.14}) that the limit (\ref{3.12}) exists and
\[
\vs_{ij}=E(\xi_i(0)\xi_j(0))+\sum_{k=1}^\infty E(\xi_i(k)\xi_j(0)+\xi_i(0)\xi_j(k))
\]
and the latter series converges. Moreover,
\begin{eqnarray*}
&|n\vs_{ij}-\sum_{k=m}^{m+n}\sum_{l=m}^{m+n}E(\xi_i(k)\xi_j(l))|\\
&\leq 6(\|\xi_i(0)\|_{K}+\|\xi_j(0)\|_{K})\sum_{k=0}^n\sum_{l=k+1}^\infty\be(K,l)\\
&+6\|\xi_i(0)\|_K\|\xi_j(0)\|_K\sum_{k=0}^n\sum_{l=k+1}^\infty\vp_{L,4M}(l)
\end{eqnarray*}
and (\ref{3.13}) follows.
\end{proof}

Next, for each $n\geq 1$ introduce the characteristic function
 \[
 f_n(w)=E\exp(i\langle w,\, n^{-1/2}\sum_{k=0}^{n-1}\xi(k)\rangle),\, w\in\bbR^d
 \]
 where $\langle\cdot,\cdot\rangle$ denotes the inner product. We will need the following
 estimate.
 \begin{lemma}\label{lem3.4}
 For any $n\geq 1$,
 \begin{equation}\label{3.15}
 |f_n(w)-\exp(-\frac 12\langle\vs w,\, w\rangle)|\leq C_fn^{-\wp}
 \end{equation}
 for all $w\in\bbR^d$ with $|w|\leq n^{\wp/2}$ where the matrix $\vs$ is given
 by (\ref{2.6}) and we can take $\wp\leq\frac 1{20}$
 and a constant $C_f>0$ does not depend on $n$.
 \end{lemma}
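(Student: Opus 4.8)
The plan is to prove \eqref{3.15} by the classical big-block/small-block decomposition, adapted to the mixing and approximation coefficients of our setup and combined with the covariance estimate of Lemma \ref{lem3.3}. Write $S_n=\sum_{k=0}^{n-1}\xi(k)$ and fix block lengths $p=[n^b]$ and a gap length $q=[n^a]$ with $0<a<b<1$ to be optimized, producing $r\approx n^{1-b}$ alternating big blocks $I_1,\dots,I_r$ of length $p$ separated by gaps of length $q$. Put $U_j=\sum_{k\in I_j}\xi(k)$ and let $V$ collect $\xi(k)$ over all gaps and the final incomplete block. Since the summable covariances of Lemma \ref{lem3.3} give $\|V\|_2\le C\sqrt{rq}$, the elementary inequality $|Ee^{i\langle w,X\rangle}-Ee^{i\langle w,Y\rangle}|\le|w|\,\|X-Y\|_1$ shows that replacing $S_n$ by $\sum_jU_j$ perturbs $f_n(w)$ by at most $Cn^{-1/2}|w|\sqrt{rq}\le Cn^{\wp/2+(a-b)/2}$, the first of several errors I must force to be $O(n^{-\wp})$.

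Next I would decouple the big blocks. First replace each $\xi(k)$, $k\in I_j$, by $\tilde\xi(k)=E(\xi(k)|\cF_{k-l,k+l})$ with $l=[q/4]$, so that $\tilde U_j=\sum_{k\in I_j}\tilde\xi(k)$ is measurable over an interval whose enlargement still leaves gaps of order $q/2$; by \eqref{2.3} this costs at most $Cn^{1/2}|w|\,\be(K,l)$ in $f_n$. As the characters $\exp(in^{-1/2}\langle w,\tilde U_j\rangle)$ are bounded by $1$, a telescoping argument peeling off one block at a time together with the bound $|E(fg)-EfEg|\le\|f\|_\infty\,\vp_{\infty,1}(\mathrm{gap})$ (licit since $\vp_{\infty,1}\le\vp_{L,8M}$ by the monotonicity of $\vp_{b,a}$) yields
\[
\Big|E\exp\big(in^{-1/2}\langle w,\textstyle\sum_j\tilde U_j\rangle\big)-\prod_{j=1}^{r}E\exp\big(in^{-1/2}\langle w,\tilde U_j\rangle\big)\Big|\le C\,r\,\vp_{L,8M}(q/2).
\]
The point that makes a genuine algebraic rate possible is that the bare summability in \eqref{2.4}, with the monotonicity of $\be$ and $\vp$, upgrades to the pointwise decays $\vp_{L,8M}(g)\le Cg^{-2}$ and $\be(K,l)\le Cl^{-4}$: indeed $\sum_m m\,\vp_{L,8M}(m)<\infty$ and $\vp$ non-increasing force $\tfrac14 g^2\vp_{L,8M}(g)\le\sum_{m\ge g/2}m\,\vp_{L,8M}(m)$, and likewise for $\sqrt{\be(K,\cdot)}$. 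Thus the decoupling error is $\le Cn^{1-b-2a}$ and the $\be$-replacement error $\le Cn^{(1+\wp)/2-4a}$.

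For a single block I expand
\[
E\exp\big(in^{-1/2}\langle w,\tilde U_j\rangle\big)=1-\tfrac1{2n}\langle w,\La_jw\rangle+R_j,\qquad\La_j=E(\tilde U_j\otimes\tilde U_j),
\]
with $|R_j|\le\tfrac16 n^{-3/2}|w|^3E|\tilde U_j|^3$ and $E|\tilde U_j|^3\le Cp^{3/2}$ again by \eqref{3.3}, so the cumulative remainder is $\sum_j|R_j|\le Cn^{-3/2}|w|^3rp^{3/2}\le C(p/n)^{1/2}n^{3\wp/2}$. Setting $z_j=\tfrac1{2n}\langle w,\La_jw\rangle-R_j$ and using $\prod_j(1-z_j)=\exp\big(-\sum_jz_j+O(\sum_j|z_j|^2)\big)$ with $\sum_j|z_j|^2\le C(p/n)n^{2\wp}$, it remains only to match $\tfrac1{2n}\sum_j\langle w,\La_jw\rangle$ with $\tfrac12\langle\vs w,w\rangle$. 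This is exactly where Lemma \ref{lem3.3} enters: bound \eqref{3.13} controls $|\sum_{k,l\in I_j}E(\xi_i(k)\xi_j(l))-p\,\vs_{ij}|$ by the convergent tail of \eqref{2.4}; summing over the $r$ blocks, accounting for the gaps (which omit an $O(rq/n)=O(n^{a-b})$ fraction of the mass) and for the $\be$-error in passing from $U_j$ to $\tilde U_j$, gives $|\tfrac1n\sum_j\La_j-\vs|\le C(n^{-b}+n^{a-b})$ and hence a contribution $\le C|w|^2n^{a-b}$ to \eqref{3.15}.

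Finally I would collect the error exponents — $\tfrac\wp2+\tfrac{a-b}2$ (small blocks), $\tfrac{1+\wp}2-4a$ ($\be$-replacement), $1-b-2a$ (decoupling), $\tfrac{b-1}2+\tfrac{3\wp}2$ (Taylor remainder), $a-b+\wp$ (covariance) — and choose $a,b$ so that each is $\le-\wp$. The choice $b=\tfrac34$, $a=\tfrac3{20}$ makes the decoupling and Taylor-remainder exponents simultaneously equal to $-\tfrac1{20}$ while the remaining three are strictly smaller, which is precisely what pins the admissible rate at $\wp=\tfrac1{20}$; this yields $f_n(w)=\exp(-\tfrac12\langle\vs w,w\rangle)+O(n^{-\wp})$ uniformly on $|w|\le n^{\wp/2}$. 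The main obstacle is not any single estimate but the simultaneous bookkeeping: the decoupling and covariance errors grow with the number of blocks and with $|w|$ (favouring long blocks and narrow gaps), whereas the Taylor remainder and the small-block error push the opposite way, so the whole scheme rests on upgrading the bare summability of \eqref{2.4}, via monotonicity, to the polynomial decays $\vp_{L,8M}(g)=O(g^{-2})$ and $\be(K,l)=O(l^{-4})$ — exactly what converts an $o(1)$ Gaussian comparison into the quantitative rate $n^{-\wp}$, uniform over the growing ball $|w|\le n^{\wp/2}$.
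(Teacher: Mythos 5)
Your proposal is correct and is essentially the paper's own argument: the paper proves Lemma \ref{lem3.4} by first upgrading the summability assumption (\ref{2.4}) to the polynomial decays $\be(K,n)=O(n^{-4})$ and $\vp_{L,8M}(n)=O(n^{-2})$ (its (\ref{3.16})), noting the decoupling inequality $|E(gh)-EgEh|\le\| g\|_\infty\| h\|_\infty\vp_{L,4M}$, and then invoking the big-block/small-block characteristic-function scheme of Lemma 3.10 in \cite{Ki22} with these coefficients in place of the $\phi$-mixing and approximation coefficients there --- precisely the blocking, conditioning, decoupling, per-block Taylor expansion and covariance matching via Lemma \ref{lem3.3} that you carry out explicitly. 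Your exponent bookkeeping with $b=\frac 34$, $a=\frac 3{20}$ correctly pins the admissible rate at $\wp=\frac 1{20}$, so your write-up is in effect a self-contained version of the proof the paper delegates to \cite{Ki22}.
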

 \begin{proof}
 First, observe that (\ref{2.4}) implies that for any $n\geq 1$,
 \begin{equation}\label{3.16}
 \be(K,n)\leq C_1n^{-4}\quad\mbox{and}\quad\vp_{L,4M}(n)\leq\vp_{L,8M}(n)\leq C_1n^{-2}
 \end{equation}
 where $C_1>0$ does not depend on $n$. Indeed, $\vp_{L,8M}(n)$ is nonnegative and does not increase
 in $n$. Hence, for some $C>0$ and any $n\geq 1$,
 \begin{eqnarray*}
 &\infty>C\geq\sum_{k=0}^\infty\sum_{l=k+1}^\infty\varpi_{L,8M}(l)\geq\sum_{k=0}^n\sum_{l=k+1}^n\varpi_{L,8M}(l)\\
 &\geq\varpi_{L,8M}(n)\sum_{k=0}^n(n-k)=\frac 12n(n+1)\varpi_{L,8M}(n),
 \end{eqnarray*}
 and so (\ref{2.4}) implies the second estimate in (\ref{3.16}). The same argument applied to $\sup_{l\geq n}\be(K,l)$
 yields the first estimate in (\ref{3.16}). Observe also that for any pair of $L^\infty$ functions $g$ and $h$ and a pair
 of $\sig$-algebras $\cG,\cH\subset\cF$ if $g$ is $\cG$-measurable and $h$ is $\cH$-measurable then
 \[
 |E(gh)-EgEh|=|E((E(g|\cH)-Eg)h)|\leq\| g\|_\infty\| h\|_\infty\vp_{L,4M}(\cH,\cG).
 \]
 Now relying on (\ref{3.16}) and the last observation, which replaces here Lemma 3.1 in \cite{Ki22} used in Lemma 3.10 there,
 the proof of the present lemma proceeds essentially in the same way as the proof of Lemma 3.10 in \cite{Ki22} replacing there
  the $\phi$-mixing coefficient $\phi(n)$ by $\vp_{L,4M}(n)$, the approximation coefficient $\rho(n)$ by $\be(K,n)$ and using
  here Lemmas \ref{lem3.2} and \ref{lem3.3} above in place of Lemmas 3.2 and 3.4 in \cite{Ki22}, respectively. Taking all these
  into account the details of the estimates in our circumstances can be easily reproduced from the proof of Lemma 3.10 of \cite{Ki22}.
 \end{proof}

 Next, set $l_1=0$, $n_k=l_k+[(k+2)^\rho],\, l_{k+1}=n_k+3[(k+2)^{\rho/2}],\, k=1,2,...$ where $\rho\geq 1$ will be chosen big enough. We view each time
  interval $[l_k,\, n_k)$ as a block and $[n_k,\, l_{k+1})$ as a much smaller (when $k$ is big) gap between the corresponding blocks.
  Set also $k_N(t)=\max\{ k:\, n_k\leq Nt\}$, $k_N=k_N(T)$ and
  $Q_k=\sum_{l_k\leq j<n_k}\xi^{(k)}(j)$ where $\xi^{(k)}(j)=E(\xi(j)|\cF_{j-[(k+1)^{\rho/2}],j+[(k+1)^{\rho/2}]})$.
  Then we have
 
 
 \begin{lemma}\label{lem3.5}
 For any $N\geq 1$ and $k\leq k_N$,
 \begin{eqnarray*}
 &E|E\big(\exp(i\langle w,\,(n_k-l_k)^{-1/2}Q_{k}\rangle)|\cF_{-\infty,n_{k-1}+[(k+1)^{\rho/2}]}\big)-g(w)|\\
 &\leq C_Q(n_k-l_k)^{-\wp}
 \end{eqnarray*}
 for all $w\in\bbR^d$ with $|w|\leq(n_k-l_k)^{\wp/2}$ where $g(w)=\exp(-\frac 12\langle\vs w,w\rangle)$, $\wp=\frac 1{20}$ and $C_Q>0$
  does not depend on $N,n_k,l_k$ and $w$.
 \end{lemma}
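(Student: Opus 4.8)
The plan is to deduce the stated estimate from the unconditional characteristic function estimate of Lemma \ref{lem3.4} by two successive approximations: removing the conditioning via the mixing coefficient, and removing the truncation so that Lemma \ref{lem3.4} can be applied to the genuine block sum. Throughout write $Z=\exp(i\langle w,(n_k-l_k)^{-1/2}Q_{N,k}\rangle)$, so that $|Z|\equiv 1$, set $\cG=\cF_{-\infty,n_{k-1}+\sqrt{m_N}}$, and note that $n_k-l_k=3m_N-3[\sqrt{m_N}]$ depends on $N$ alone and tends to $\infty$ with $N$. Since $EZ$ and $g(w)$ are constants, I would start from
\begin{equation*}
E|E(Z|\cG)-g(w)|\leq E|E(Z|\cG)-EZ|+|EZ-g(w)|.
\end{equation*}

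First I would bound the conditioning error. Each $\xi^{(\sqrt{m_N})}(j)=E(\xi(j)|\cF_{j-\sqrt{m_N},j+\sqrt{m_N}})$ is $\cF_{j-\sqrt{m_N},j+\sqrt{m_N}}$-measurable and $j\geq l_k$, so $Q_{N,k}$, and hence $Z$, is measurable with respect to $\cH=\cF_{l_k-\sqrt{m_N},\infty}$. The gap between $\cG$ and $\cH$ equals
\begin{equation*}
n'=(l_k-\sqrt{m_N})-(n_{k-1}+\sqrt{m_N})=3[\sqrt{m_N}]-2\sqrt{m_N}\geq\sqrt{m_N}-3.
\end{equation*}
Splitting $Z$ into real and imaginary parts, each bounded by $1$ and so of $L^L$-norm at most $1$, the definitions (\ref{2.1})--(\ref{2.2}), the monotonicity of $\varpi_{L,a}$ in $a$, and the decay estimate (\ref{3.16}) give
\begin{equation*}
E|E(Z|\cG)-EZ|\leq 2\varpi_{L,1}(n')\leq 2\varpi_{L,8M}(n')\leq 2C_1(n')^{-2}=O(m_N^{-1}),
\end{equation*}
which is $O(N^{-1/2})$ and therefore far below the target order $(n_k-l_k)^{-\wp}$.

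Next I would treat $|EZ-g(w)|$ by first discarding the truncation. Using $|e^{ia}-e^{ib}|\leq|a-b|$, the bound $\|\xi^{(\sqrt{m_N})}(j)-\xi(j)\|_1\leq\|\xi^{(\sqrt{m_N})}(j)-\xi(j)\|_K\leq\be(K,\sqrt{m_N})$ from (\ref{2.3}), and stationarity to identify $E\exp(i\langle w,(n_k-l_k)^{-1/2}\sum_{j=l_k}^{n_k-1}\xi(j)\rangle)$ with $f_{n_k-l_k}(w)$, I obtain
\begin{align*}
|EZ-f_{n_k-l_k}(w)|&\leq|w|(n_k-l_k)^{-1/2}\sum_{j=l_k}^{n_k-1}\|\xi^{(\sqrt{m_N})}(j)-\xi(j)\|_1\\
&\leq|w|(n_k-l_k)^{1/2}\be(K,\sqrt{m_N}).
\end{align*}
With $|w|\leq(n_k-l_k)^{\wp/2}$ and $\be(K,\sqrt{m_N})\leq C_1 m_N^{-2}$ from (\ref{3.16}), this is of order $N^{\wp/4-3/4}$, again negligible. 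Finally Lemma \ref{lem3.4} applies verbatim to the block length $n=n_k-l_k\to\infty$ over the admissible range $|w|\leq(n_k-l_k)^{\wp/2}$ and yields $|f_{n_k-l_k}(w)-g(w)|\leq C_f(n_k-l_k)^{-\wp}$.

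Adding the three estimates produces the assertion with $C_Q=C_f+o(1)$, uniformly in $k$ and $w$ because $n_k-l_k$ and all the constants are independent of $k$. There is no new probabilistic idea here beyond Lemma \ref{lem3.4}; the only real work — and the one place to be careful — is the geometry of the block-and-gap decomposition, namely checking that the truncation window and the gap $n'$ are both of order $\sqrt{m_N}\asymp N^{1/4}$ so that, against the normalizing scale $(n_k-l_k)^{1/2}\asymp N^{1/4}$, the mixing and truncation errors are genuinely of smaller order than $(n_k-l_k)^{-\wp}\asymp N^{-\wp/2}$. This is guaranteed by the polynomial decay rates (\ref{3.16}) that follow from assumption (\ref{2.4}).
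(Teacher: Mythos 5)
Your proof is correct and follows essentially the same route as the paper's: the same three-term triangle inequality, with the conditioning error bounded by $\varpi_{L,\cdot}$ over the gap of order $[\sqrt{m_N}]$ (the paper uses $E|E(F_{N,k}|\cG)-EF_{N,k}|\leq\|\cdot\|_{4M}\leq\varpi_{L,4M}([\sqrt{m_N}])$ where you use $\varpi_{L,1}\leq\varpi_{L,8M}$, an immaterial difference), the truncation error bounded by $|w|(n_k-l_k)^{1/2}\be(K,\sqrt{m_N})$ exactly as in the paper, and the main term handled by Lemma \ref{lem3.4} via stationarity, all made negligible by the polynomial decay (\ref{3.16}).
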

 \begin{proof}
 Set $F_{k}=\exp(i\langle w,(n_k-l_k)^{1/2}Q_{k}\rangle)$. Then $F_{k}$ is $\cF_{l_k-[(k+1)^{\rho/2}],\infty}$-measurable and
 since $|F_{k}|=1$ we obtain by (\ref{2.1}) and (\ref{2.2}) that
 \begin{eqnarray*}
 &E|E(F_{k}|\cF_{-\infty,n_{k-1}+[(k+1)^{\rho/2}]})-EF_{k}|\\
 &\leq\| E(F_{k}|\cF_{-\infty,n_{k-1}+[(k+1)^{\rho/2}]})-EF_{k}\|_{4M}\leq\varpi_{L,4M}([(k+1)^{\rho/2}]).
 \end{eqnarray*}
 Since $|e^{ia}-e^{ib}|\leq |a-b|$ we obtain from (\ref{2.3}), taking into account stationarity of $\xi(k)$'s, that
 \begin{eqnarray*}
 &|EF_{k}-f_{n_k-l_k}(w)|\leq |w|(n_k-l_k)^{-1/2}\sum_{j=l_k}^{n_k-1}E|\xi^{(k)}(j)-\xi(j)|\\
 &\leq |w|(k+2)^{\rho/2}\be(K,[(k+1)^{\rho/2}])
 \end{eqnarray*}
 where $f_n(w)$ is the same as in Lemma \ref{lem3.4}. These together with (\ref{2.4}), (\ref{2.4+}) and (\ref{3.16}) yields the estimate of the lemma.
 \end{proof} 

 \subsection{Strong approximations}\label{subsec3.3}
 Our strong approximations (coupling) will be based on the following result which appears as
 Theorem 4.6 of \cite{DP}. As usual, we will denote by $\sig\{\cdot\}$ a $\sig$-algebra generated by
random variables or vectors appearing inside the braces and we write $\cG\vee\cH$ for the minimal
$\sig$-algebra containing both $\sig$-algebras $\cG$ and $\cH$.
\begin{theorem}\label{thm3.6}
Let $\{ V_m,\, m\geq 1\}$ be a sequence of random vectors with values in $\bbR^d$ defined on some
probability space $(\Om,\cF,P)$ and such that $V_m$ is measurable with respect to $\cG_m$, $m=1,2,...$
where $\cG_m,\, m\geq 1$ is a filtration of sub-$\sig$-algebras of $\cF$.
Assume that the probability space is rich enough
so that there exists on it a sequence of uniformly distributed on $[0,1]$ independent random variables $U_m,\, m\geq 1$ independent of $\vee_{m\geq 0}\cG_m$. For each $m\geq 1$, let $G_m$
 be a probability distribution on $\bbR^d$ with the characteristic function
 \[
 g_m(w)=\int_{\bbR^d}\exp(i\langle w,x\rangle)G_m(dx),\,\, w\in\bbR^d.
 \]
 Suppose that for some non-negative numbers $\nu_m,\del_m$ and $K_m\geq 10^8d$,
 \begin{equation}\label{3.19}
 E\big\vert E(\exp(i\langle w,V_m\rangle)|\cG_{m-1})-g_m(w)\big\vert \leq\nu_m
 \end{equation}
 for all $w$ with $|w|\leq K_m$, and that
 \begin{equation}\label{3.20}
 G_m(\{ x:\, |x|\geq\frac 12K_m\})\leq\del_m.
 \end{equation}
 Then there exists a sequence $\{ W_m,\, m\geq 1\}$ of $\bbR^d$-valued independent random
 vectors defined on $(\Om,\cF,P)$ such that $W_m$ has the distribution $G_m$, it is
 $\sig\{ V_m,U_m\}$-measurable, $W_m$ is
 independent of $\sig\{ U_1,...,U_{m-1}\}\vee\cG_{m-1}$ (and so also of $W_1,...,W_{m-1})$ and
 \begin{equation}\label{3.21}
 P\{ |V_m-W_m|\geq\vr_m\}\leq\vr_m
 \end{equation}
 where $\vr_m=16K^{-1}_m\log K_m+2\nu_m^{1/2}K_m^d+2\del_m^{1/2}$.
 In particular, the Prokhorov distance between the distributions $\cL(V_m)$ and $\cL(W_m)$
 of $V_m$ and $W_m$, respectively, does not exceed $\vr_m$.
 \end{theorem}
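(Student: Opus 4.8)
The result is a conditional coupling (``transfer'') theorem, so the plan is to reduce to a single index $m$, pass from the hypothesis on conditional characteristic functions to a quantitative closeness of laws, and then realize an almost optimal coupling using the auxiliary randomness $U_m$. First I would fix $m$ and introduce a regular conditional distribution $\mu_m=\mu_m(\om,\cdot)$ of $V_m$ given $\cG_{m-1}$, which exists since the state space is $\bbR^d$; its characteristic function is exactly $\hat\mu_m(w)=E(\exp(i\langle w,V_m\rangle)|\cG_{m-1})$, so (\ref{3.19}) says that for each fixed $w$ with $|w|\le K_m$ the nonnegative random variable $|\hat\mu_m(w)-g_m(w)|$ has expectation at most $\nu_m$. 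The whole construction will be carried out index by index, feeding $U_m$ into the $m$-th step, so that the independence of the $U_m$ from $\vee_k\cG_k$ and from one another can be used to make the resulting $W_m$ independent of the past.

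The analytic core is a smoothing (Esseen-type) inequality in $\bbR^d$ that converts smallness of $\hat\mu_m-g_m$ on the frequency ball $\{|w|\le K_m\}$ into smallness of a Prokhorov-type distance between $\mu_m$ and $G_m$. I would convolve both measures with a fixed kernel whose Fourier transform is supported in that ball; comparing the smoothed measures costs a term controlled by $\int_{|w|\le K_m}|\hat\mu_m(w)-g_m(w)|\,dw$, which is of order $K_m^d$ times the pointwise discrepancy, while undoing the smoothing costs the kernel-width term of order $K_m^{-1}\log K_m$ together with a contribution from the tail mass $G_m(\{|x|\ge\frac12 K_m\})<\del_m$. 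Integrating the $L^1(P)$-bound of (\ref{3.19}) over the ball and then applying Markov's inequality is what produces the square roots: the frequency integral has expectation of order $\nu_m K_m^d$ and the tail mass has expectation below $\del_m$, hence each is small off an exceptional event of probability $O(\nu_m^{1/2})$, respectively $O(\del_m^{1/2})$. This step, and in particular the bookkeeping of constants so that the three contributions assemble into exactly $16K_m^{-1}\log K_m+2\nu_m^{1/2}K_m^d+2\del_m^{1/2}$, is where I expect the real work, and the main obstacle, to lie.

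With the estimate $\pi(\mu_m,G_m)\le\vr_m$ in hand on the good event, I would invoke a measurable, conditional form of the Strassen--Dudley theorem to build the coupling. Using the regular conditional distributions and a measurable selection, the Strassen coupling of $\mu_m$ and $G_m$ can be written as the image of Lebesgue measure on $[0,1]$ under a jointly measurable map, into which I feed the independent uniform variable $U_m$; this yields $W_m$ that is $\sig\{V_m,U_m\}$-measurable, has unconditional law $G_m$, and satisfies $P\{|V_m-W_m|\ge\vr_m\}\le\vr_m$. Because $U_m$ is independent of $\vee_k\cG_k$ and of $U_1,\dots,U_{m-1}$, the constructed $W_m$ is independent of $\sig\{U_1,\dots,U_{m-1}\}\vee\cG_{m-1}$; since $W_1,\dots,W_{m-1}$ are measurable with respect to that last $\sig$-algebra and $W_m$ carries the exact target law $G_m$, the $W_m$ emerge mutually independent by induction, and the claimed Prokhorov-distance bound between $\cL(V_m)$ and $\cL(W_m)$ is then immediate from (\ref{3.21}).
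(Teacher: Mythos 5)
The paper gives no proof of Theorem \ref{thm3.6}: it is quoted verbatim as Theorem 4.6 of \cite{DP}, so the only comparison available is with that source's argument, which your sketch reconstructs faithfully --- regular conditional distributions, a smoothing (Esseen-type) inequality on $\{|w|\le K_m\}$ with Markov's inequality supplying the square-root terms in $\vr_m$, and a measurable conditional Strassen--Dudley coupling driven by $U_m$, with independence of the $W_m$ obtained by induction exactly as you argue (this is the Berkes--Philipp \cite{BP} scheme). Your outline is correct; the one imprecision is minor: the tail mass $G_m(\{|x|\ge \frac 12 K_m\})$ is deterministic, so no Markov step is needed for it --- the $\del_m^{1/2}$ in $\vr_m$ emerges from the smoothing/optimization bookkeeping (and in any case dominates $\del_m$), which you have rightly flagged as where the constant-chasing lives.
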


 In order to apply this theorem we set
 $V_{m}=(n_{m}-l_{m})^{-1/2}Q_{m}$, $\cG_{m}=\sig\{ V_1,...,V_m\}\subset
 \cF_{-\infty,n_{m}+(m+1)^{\rho/2}}$ and $g_m=g$ defined in Lemma \ref{lem3.5}, so that
 $G_m=G$ is the mean zero $d$-dimensional Gaussian distribution with the covariance matrix $\vs$ and the characteristic function $g$.
 By Lemma \ref{lem3.5} for $|w|\leq [(k+2)^\rho]^{\wp/2}$,
\begin{equation}\label{3.22}
 E\big\vert E\big(\exp(i\langle w,V_{m}\rangle)| \cG_{m-1}\big)-g_{m}(w)\big\vert\\
 \leq C_Q(n_{m}-l_{m})^{-\wp}
 \end{equation}
 where, recall, $\wp=\frac 1{20}$. Next, we define $K_m=(n_m-l_m)^{\wp/4d}=[(m+2)^\rho]^{\wp/4d}$. 
 Theorem \ref{thm3.6} requires that $K_m\geq 10^8d$ which will hold true in our case if 
 $3^\rho\geq 10^{32d/\wp}d^{4d/\wp}+1$, and so we choose $\rho$ so large that the latter inequality will be satisfied.

Next, let $\Psi$ be a mean zero Gaussian random variable with the
covariance matrix $\vs$. Then by the Chebyshev inequality,
\begin{eqnarray}\label{3.23}
& G(\{ y\in\bbR^d:\, |y|\geq\frac 12(n_m-l_m)^{\frac \wp{4d}}\})\\
&\leq P\{|\Psi| \geq\frac 12(n_m-l_m)^{\frac \wp{4d}}\}\nonumber\\
&\leq C_Gd(n_m-l_m)^{-\frac \wp{2d}}=C_Gd[(m+2)^\rho]^{-\wp/4d}=\del_m\nonumber
\end{eqnarray}
where $C_G>0$ does not depend on $m,n_m$ and $l_m$.
Now, Theorem \ref{thm3.6} provides us with independent random vectors $\{ W_m,\, m\geq 1\}$
having the mean zero Gaussian distribution with the covariance matrix $\vs$ and such that
 \begin{eqnarray}\label{3.24}
 &\quad\vr_{m}=\vr_m(N)=4\frac \wp d(n_m-l_m)^{-\wp/4d}\log(n_m-l_m)+2C_6^{1/2}(n_m-l_m)^{-\wp/4}\\
 &+ 2C_Gd(n_m-l_m)^{-\wp/4d}\leq C_\vr(n_m-l_m)^{-\wp/8d}=C_\vr[(m+2)^\rho]^{-\wp/8d}\nonumber
 \end{eqnarray}
 where $C_\vr>0$ does not depend on $m,n_m$ and $l_m$.

Next, let $\cW(t),\, t\geq 0$ be a $d$-dimensional Brownian motion with the covariance matrix $\vs$.
Then the sequences of random vectors 
\[
\tilde W_k=(\cW(n_k)-\cW(l_k))\quad\mbox{and}\quad (n_k-l_k)^{1/2}W_k,\, k\leq k_N(T)
\]
have the same distribution. Hence, by Lemma A1 from \cite{BP} the sequences $\xi(k)$ and $W_k,\, k\geq 1$ 
can be redefined without changing their joint distributions on a richer probability space where there exists
a $d$-dimensional Brownian motion $\cW(t)$ with the covariance matrix $\vs$ such that the pairs $(V_k,W_k)$ and
\[
(V_k,\,(n_k-l_k)^{-1/2}(\cW(n_k)-\cW(l_k)),\, k\leq k_N(T)),
\]
constructed by means of the redefined processes, have the same joint distributions. Thus, we can assume from now on that
\[
V_k=(n_k-l_k)^{-1/2}Q_{k}\quad\mbox{and}\quad W_k=(n_k-l_k)^{-1/2}(\cW(n_k)-\cW(l_k)),
\]
constructed by the redefined processes, have the properties asserted in Theorem \ref{thm3.6}.

The first step in the proof of Theorem \ref{thm2.1} is the following assertion concerning the uniform norms.
\begin{proposition}\label{prop3.7}
Let, as explained above, the sequence of random vectors $\xi(n),\,-\infty<n<\infty$ be redefined preserving its distributions on a sufficiently rich
 probability space which contains also a $d$-dimensional Brownian motion $\cW$ with the covariance matrix $\vs$ (at the time 1) so that 
the sequences of random vectors $V_k=(n_k-l_k)^{-1/2}\sum_{j=l_k}^{n_k-1}\xi^{k)}(j)$ and $W_k=(n_k-l_k)^{-1/2}(\cW(n_k)-\cW(l_k)),\, k\geq 1$
have the properties described in Theorem \ref{thm3.6}. Then the normalized sums $\bbS^{(1)}_N(t),\, t\in[0,T]$ and the rescaled  Brownian motion
 $W_N(t)=N^{-1/2}\cW(Nt),\,  t\in[0,T]$ satisfy for any $N\geq 1$ the estimate,
 \begin{equation}\label{3.25}
 I_N=E\sup_{0\leq t\leq T}| \bbS^{(1)}_N(t)-W_N(t)|^{4M}\leq C_MN^{-\del_1}
 \end{equation}
 where, recall, $\bbS_N^{(1)}(t)=S_N(t)=N^{-1/2}\sum_{0\leq k<tN}\xi(k)$ and the constants $\del_1>0$, $C_M>0$ do not depend on $N$.
\end{proposition}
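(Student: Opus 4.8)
The plan is to write $\mathbb{S}_N^{(1)}(t)-W_N(t)=N^{-1/2}\big(\sum_{k=0}^{[Nt]-1}\xi(k)-\cW(Nt)\big)$ and to decompose the inner bracket along the block--gap partition already fixed through $m_N$, $n_k$, $l_k$. After first replacing each $\xi(j)$ by its truncation $\xi^{(\sqrt{m_N})}(j)=E(\xi(j)|\cF_{j-\sqrt{m_N},j+\sqrt{m_N}})$, so that the complete block sums become exactly the $Q_{N,k}$ feeding the coupling, I split $\sum_{k=0}^{[Nt]-1}\xi(k)$ into (i) block sums $\sum_{l_k\le j<n_k}\xi^{(\sqrt{m_N})}(j)=Q_{N,k}$, (ii) gap sums $\sum_{n_{k-1}\le j<l_k}\xi^{(\sqrt{m_N})}(j)$, and (iii) a terminal partial piece; correspondingly $\cW(Nt)$ splits into the block increments $\cW(n_k)-\cW(l_k)=(n_k-l_k)^{1/2}W_k$, the gap increments $\cW(l_k)-\cW(n_{k-1})$, and $\cW(Nt)-\cW(n_{K(t)})$. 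The goal is then to bound $E\sup_t|\cdot|^{4M}$ of each resulting error.

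Three of the four error contributions are \emph{lower order}. For the truncation error $\eta_j=\xi(j)-\xi^{(\sqrt{m_N})}(j)$ one has $E(\eta_j|\cF_{j-\sqrt{m_N},j+\sqrt{m_N}})=0$ and $\|\eta_j\|_{4M}\le\be(K,\sqrt{m_N})\le CN^{-1}$ by (\ref{3.16}); applying Lemma \ref{lem3.1}, with the mixingale quantity $A_{4M}$ bounded through the $\vp$-decay of $\|E(\eta_j|\cdot)\|_{4M}$, yields after the $N^{-2M}$ normalization a large negative power of $N$. The gap sums comprise $O(N^{3/4})$ terms and are controlled by (\ref{3.3}) of Lemma \ref{lem3.2}, while the independent Gaussian gap increments have total variance $O(N^{3/4})$; both give $O(N^{-M/2})$ after normalization. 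The terminal pieces involve at most $3m_N\sim\sqrt N$ summands and a single Brownian increment over time $\le 3m_N$, whose maxima over the $\sim\sqrt N$ blocks are handled by Lemma \ref{lem3.2} and the Gaussian maximal inequality, again negligibly.

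The heart of the matter, which fixes the exponent $\del_1$, is the coupling error $N^{-1/2}\sum_{k\le K}D_k$ with $D_k=Q_{N,k}-(\cW(n_k)-\cW(l_k))=(n_k-l_k)^{1/2}(V_k-W_k)$; here the triangle inequality is hopelessly lossy, and one must exploit the near-martingale structure from Theorem \ref{thm3.6}. Indeed $D_k$ is $\cH_k$-measurable for $\cH_k=\sig\{U_1,\dots,U_k\}\vee\cG_k$, while $W_j\perp\cH_{j-1}$ forces $E(\cW(n_j)-\cW(l_j)|\cH_i)=0$ for $i<j$, and $E(Q_{N,j}|\cH_i)$ decays in $j-i$ by mixing since the $Q_{N,j}$-block and $\cH_i$ are separated by a gap of order $(j-i)m_N$. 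Hence the coefficient $A_{4M}$ in Lemma \ref{lem3.1} reduces essentially to $\max_k\|D_k\|_{4M}$. To bound $\|D_k\|_{4M}$ I would turn the probabilistic estimate (\ref{3.24}) into a moment estimate: splitting on $\{|V_k-W_k|<\vr_k\}$ and its complement, using $\vr_k\le C_\vr N^{-\wp/8d}$ on the good event, and on the bad event applying Cauchy--Schwarz with the uniform eighth-order bounds $E|V_k|^{8M}=O(1)$ (from (\ref{3.3}) of Lemma \ref{lem3.2}, which is precisely why the moment order in (\ref{2.4}) is $K=\max(2L,8M)$) and $E|W_k|^{8M}=O(1)$. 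This gives $E|V_k-W_k|^{4M}\le C\vr_k^{1/2}$, hence $\|D_k\|_{4M}^{4M}\le C(n_k-l_k)^{2M}\vr^{1/2}\le CN^{M}\vr^{1/2}$. Feeding $A_{4M}^{4M}\le CN^M\vr^{1/2}$ and the block count $K\sim N^{1/2}$ into Lemma \ref{lem3.1} gives $E\max_K|\sum_{k\le K}D_k|^{4M}\le CN^{2M}\vr^{1/2}$, so after normalization the coupling error is $O(\vr^{1/2})=O(N^{-\wp/16d})$.

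I expect the coupling step to be the main obstacle: one must set up the decomposition so that $D_k$ is adapted to a filtration in which the independent Brownian blocks annihilate all future conditional expectations, and then convert the purely distributional bound (\ref{3.24}) into an $L^{4M}$ estimate, which hinges on the uniform high-moment control of $V_k$ coming from Lemma \ref{lem3.2}. Collecting the four contributions, the slowest-decaying one is the coupling error, and one takes $\del_1$ to be a suitable positive fraction of $\wp/8d$, completing the proof of (\ref{3.25}).
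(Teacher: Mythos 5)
Your proposal is correct and follows essentially the same route as the paper's proof: the same block--gap--truncation decomposition with the block sums $Q_{N,k}$ feeding the coupling of Theorem \ref{thm3.6}, the same filtration $\sig\{U_1,\dots,U_k\}\vee\cG_k$ in which the Brownian block increments have vanishing future conditional expectations and $E(Q_{N,k}|\cdot)$ decays by $\vp_{L,4M}$-mixing, and the same conversion of the distributional bound (\ref{3.24}) into an $L^{4M}$ bound by splitting on $\{|V_k-W_k|<\vr_k\}$ and applying Cauchy--Schwarz with the uniform $8M$-th moment bounds, before invoking Lemma \ref{lem3.1} (the paper handles the truncation error by a cruder H\"older bound rather than your mixingale argument, and its exponent bookkeeping differs immaterially from your $O(N^{-\wp/16d})$, but both give some $\del_1>0$). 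The remaining terms (gap sums, within-block oscillations, terminal pieces) are treated exactly as in the paper via Lemma \ref{lem3.2} and Gaussian maximal estimates.
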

\begin{proof} 
Observe that
\[
\sup_{0\leq t\leq T}| S^{(1)}_N(t)-W_N(t)|\leq\sum_{i=1}^6(\max_{1\leq n\leq k_N}I^{(i)}_N(n))+I_N^{(7)}+I_N^{(8)}
\]
where
\[
I_N^{(1)}(n)=N^{-1/2}|\sum_{1\leq k\leq n}(Q_k-(\cW(n_k)-\cW(l_k)))|=N^{-1/2}|\sum_{1\leq k\leq n}((n_k-l_k)(V_k-W_k))|
\]
and
\[
I_N^{(2)}(n)=N^{-1/2}|\sum_{1\leq k\leq n}\sum_{j=l_k}^{n_k-1}(\xi(j)-\xi^{(k)}(j))|
\]
with the last term which is due to the fact that the sum in $Q_k$ is over $\xi^{(k)}(j)$'s and not over $\xi(j)$'s.
The following two terms
\[
I_N^{(3)}(n)=N^{-1/2}|\sum_{1\leq k< n}\sum_{j=n_{k}}^{l_{k+1}-1}\xi(j)|\,\,\mbox{and}\,\,I_N^{(4)}(n)=N^{-1/2}|\sum_{1\leq k< n}(\cW(l_{k+1})-\cW(n_k))|
\]
correspond to summation in gaps while the next two terms
\begin{eqnarray*}
&I_N^{(5)}(n)=N^{-1/2}\max_{1\leq k\leq n}\max_{l_{k}\leq m<l_{k+1}}|\sum_{j=l_{k}}^m\xi(j)|\\
&\mbox{and}\,\,\, I_N^{(6)}(n)=N^{-1/2}\max_{1\leq k\leq n}\sup_{l_{k}\leq t<l_{k+1}}|\cW(t)-\cW(l_{k})|
\end{eqnarray*}
correspond to the case where the supremum in $t$ of $|S_N(t)-W_N(t)|$ occurs inside of a block or of a gap and not at their ends.
The last two terms
\[
I_N^{(7)}=N^{-1/2}\max_{n_{k_N}\leq m<TN}|\sum_{j=n_{k_N}}^m\xi(j)|\,\,\mbox{and}\,\, I_N^{(8)}=N^{-1/2}\sup_{n_{k_N}\leq t\leq TN}|\cW(t)-\cW(n_{k_N})|
\]
correspond to the case  where the supremum in $t$ of $|S_N(t)-W_N(t)|$ occur after the last block but before the last gap end. 


We will estimate 
\[
E\max_{1\leq n\leq k_N}(I^{(1)}_N(n))^{4M}=N^{-2M}E\max_{1\leq n\leq k_N}\big(\sum_{1\leq k\leq n}(Q_k-(\cW(n_k)-\cW(l_k)))\big)^{4M}
\]
using Lemma \ref{lem3.1}. Recall that $Q_{k}$ is $\cG_k\subset\cF_{-\infty,n_k+[(k+1)^{\rho/2}]}$-measurable
and $\cW(n_k)-\cW(l_k)=(n_k-l_k)^{1/2}W_k$ is $\cG_k\vee\sig\{ U_1,...,U_k\}$-measurable. On the other hand, $\cW(n_k)-\cW(l_k)$ is independent
of $\cG_{k-1}\vee\sig\{ U_1,...,U_{k-1}\}$, and so for any $j<k$,
\[
E(\cW(n_k)-\cW(l_k)|\cG_j\vee\sig\{ U_1,...,U_j\})=E(\cW(n_k)-\cW(l_k))=0.
\]
Next, since $V_k$, and so $Q_{k}$, are independent of $\sig\{ U_1,...,U_j\}$ when $j<k$ and the latter $\sig$-algebra is independent of $\cG_j$, 
we obtain that
\[
E(Q_{k}|\cG_j\vee\sig\{ U_1,...,U_j\})=E(E(Q_{k}|\cG_{j\vee(k-2)})|\cG_j).
\]
Now By (\ref{2.1}), (\ref{2.2}) and (\ref{2.4+}) we obtain for $j<k$,
\begin{eqnarray*}
&\| E(Q_{k}|\cG_{j\vee(k-2)})\|_{4M}\leq\sum_{i=l_k}^{n_k-1}\| E(\xi^{(k)}(i)|\cF_{-\infty,n_j\vee n_{k-2}+[(k+1)^{\rho/2}]})\|_{4M}\\
&\leq \|\xi(0)\|_L\sum_{i=l_k}^{n_k-1}\vp_{L,4M}(i-n_j\vee n_{k-2}-2[(k+1)^{\rho/2}])\leq\hat C_M(k+1)^{-\rho/2}\\
&\mbox{if}\,\, j=k-1\,\,\,\mbox{and}\,\,\,\leq\hat C_M(k+1)^{-\rho}\,\,\mbox{if}\,\, j<k-1
\end{eqnarray*}
where $\hat C_M>0$ does not depend on $k$ and $N$.

Now, in order to use Lemma \ref{lem3.1} we have to bound $A_{4M}$ appearing there and to do this it remains to consider the case
$j=k$, i.e. to estimate $\| Q_{k}-(\cW(n_k)-\cW(l_k))\|_{4M}$ and then to combine this with the above conditional expectations estimates. 
By the Cauchy-Schwarz inequality for any $k\geq 1$,
\begin{eqnarray*}
&E|Q_{k}-(\cW(n_k)-\cW(l_k))|^{4M}=(n_k-l_k)^{2M}\big(E(|V_k-W_k|^{4M}\bbI_{|V_k-W_k|\leq\vr})\\
&+E(|V_k-W_k|^{4M}\bbI_{|V_k-W_k|>\vr})\big)\\
&\leq (n_k-l_k)^{2M}(\vr_k^{4M}+(E|V_k-W_k|^{8M})^{1/2}(P\{|V_k-W_k|>\vr_k)^{1/2}\}\\
&\leq (n_k-l_k)^{2M}(\vr_k^{4M}+\vr_k^{1/2}2^{4M}((E|V_k|^{8M})^{1/2}+(E|W_k|^{8M})^{1/2}).
\end{eqnarray*}
By Lemma \ref{lem3.3} (with $2M$ in place of $M$ in (\ref{3.3})) and the standard moment estimates for the Brownian motion,
\[
(n_k-l_k)^{2M}(E|V_k|^{8M})^{1/2}=(E|Q_{k}|^{8M})^{1/2}\leq\hat C_M(n_k-l_k)^{2M}
\]
and
\[
(n_k-l_k)^{2M}(E|W_k|^{8M})^{1/2}=(E|\cW(n_k)-\cW(l_k)|^{8M})^{1/2}\leq\hat C_M(n_k-l_k)^{2M}
\]
where $\hat C_M>0$ does not depend on $k$ and $N$. Combining the above estimates including (\ref{3.24}) we conclude that $A_{4M}$ from Lemma \ref{lem3.1}
for the sum $\sum_{1\leq k\leq n}(Q_{k}-(\cW(n_k)-\cW(l_k))$ is bounded by $C(n_{k_N}-l_{l_N})^{\frac 12-\frac \wp{64Md}}$ where $C>0$ does not depend
on $N$. Taking into account that $k_N$ is of order $N^{\frac 1{\rho+1}}$ we obtain that
\begin{eqnarray*}
&E\max_{1\leq n\leq k_N}(I_N^{(1)}(n))^{4M}\leq C^{(1)}(M)N^{-2M}(n_{k_N}-l_{k_N})^{2M-\frac \wp{16d}}k_N^{2M}\\
&\leq\tilde C^{(1)}(M)N^{-2M}k_N^{\rho(2M-\frac {\wp}{16d})+2M}\leq\hat C^{(1)}(M)N^{-\frac {\rho\wp}{16d(\rho+1)}},
\end{eqnarray*}
 where $C^{(1)}(M),\tilde C^{(1)}(M),\hat C^{(1)}(M)>0$ do not depend on $N$.

The estimates of $I_N^{(i)},\, i=2,...,8$ are a bit more straightforward. First, we write 
\begin{eqnarray*}
&E\max_{1\leq n\leq k_N}(I_N^{(2)}(n))^{4M}\leq E\max_{1\leq n\leq\ell_N}(I_N^{(2)}(n))^{4M}\\
&+2^{4M}\big(E(I_N^{(2)}(\ell_N))^{4M}+E\max_{\ell_N<n\leq k_N}(I^{(2)}_N(\ell_N,n))^{4M}\big)
\end{eqnarray*}
where $\ell_N=\max\{ k:\, n_k\leq N^{7/8}\}$, $I^{(2)}_N(l,n)=N^{-1/2}|\sum_{l\leq k\leq n}\sum_{l_k<j\leq n_k}(\xi(j)-\xi^{(k)}(j))|$
and we observe that $\ell_N$ is of order $N^{\frac 7{8(\rho+1)}}$. When $n\leq\ell_N$ there are no more than $2N^{7/8}$ terms in the sums of $I_N^{(2)}(n)$. 
Hence by (\ref{3.3}) of Lemma \ref{lem3.2},
\[
E\max_{1\leq n\leq\ell_N}(I^{(2)}_N(n))^{4M}\leq C^{(2)}(M)N^{-M/4}\,\,\mbox{and}\,\, E(I^{(2)}_N(\ell_N))^{4M}\leq C^{(2)}(M)N^{-M/4}
\]
where a constant $C^{(2)}(M)>0$ does not depend on $N$. Next,
\begin{eqnarray*}
&E\max_{\ell_N<n\leq k_N}( I_N^{(2)}(\ell_N,n))^{4M}\leq\hat C^{(2)}(M)N^{2M}\sum_{k=\ell_N}^{ k_N}\sum_{k=l_k}^{n_k}E|\xi(j)-\xi^{(k)}(j)|^{4M}\\
&\leq\hat C^{(2)}(M)N^{2M+1}(\be(4M,(\ell_N+1)^{\rho/2}))^{4M}\leq\tilde C^{(2)}(M)N^{-M/4}
\end{eqnarray*}
for some $\hat C^{(2)}(M),\,\tilde C^{(2)}(M)>0$ which do not depend on $N$ where we assume that $\rho\geq 13$ and take into account (\ref{2.4+}).

Relying on (\ref{3.3}) of Lemma \ref{lem3.2} and taking into account that sums over gaps contain the order of $N^{\frac {\rho+2}{2(\rho+1)}}$ terms we estimate
\[
E\max_{1\leq n\leq k_N}(I^{(3)}(n))^{4M}\leq C_g(M)N^{-M/2}\,\,\mbox{and}\,\, E\max_{1\leq n\leq k_N}(I^{(4)}(n))^{4M}\leq C_g(M)N^{-M/2}
\]
where $C_g(M)>0$ does not depend on $N$. Observe that an estimate of the last expression is easier since there we deal with sums of independent (Gaussian) random variables.
Using again (\ref{3.3}) of Lemma \ref{lem3.2} and the fact that $k_N$ is of order $N^{\frac 1{\rho+1}}$ we estimate
\begin{eqnarray*}
&E\max_{1\leq n\leq k_N}(I^{(5)}(n))^{4M}\leq N^{-2M}\sum_{0\leq k<k_N}E\max_{l_{k}\leq m<l_{k+1}}|\sum_{j=l_k}^m\xi(j)|^{4M}\\
&\leq\hat C_\xi(M)N^{-2M}\sum_{1\leq k\leq k_N}(k+2)^{2M\rho}\leq\tilde C_\xi(M)N^{-\frac M{\rho+1}}
\end{eqnarray*}
where $\hat C_\xi(M)>0$ and $\tilde C_\xi(M)>0$ do not depend on $N$. Similarly, by the standard estimates for the Brownian motion,
\[
E\max_{1\leq n\leq k_N}(I^{(6)}(n))^{4M}\leq C_\cW(M)N^{-\frac M{\rho+1}}
\]
where $C_\cW(M)>0$ does not depend on $N$. The remaining estimates
\[
E(I_N^{(7)})^{4M}\leq C_\xi(M)N^{-2M}k_N^{2M\rho}\leq\tilde C_\xi(M)N^{-\frac {2M}{\rho+1}}\,\,\mbox{and}\,\, E(I_N^{(8)})^{4M}\leq C_\cW(M)N^{-\frac {2M}{\rho+1}}
\]
for come constants $C_\xi(M)>0,\tilde C_\xi(M)$ and $C_\cW(M)>0$ which do not depend on $N$, follow directly from (\ref{3.3}) of Lemma \ref{lem3.2} and the estimate of $k_N$.
Collecting these estimates we arrive at (\ref{3.25}) completing the proof 
of Proposition \ref{prop3.7}.
\end{proof}

\section{Discrete time case estimates}\label{sec4}\setcounter{equation}{0}
\subsection{Variational norm estimates for sums}\label{subsec4.1}

First, we write for $0\leq s<t\leq T$,
\begin{eqnarray}\label{4.1}
&E\| S_N-W_N\|_{p,[0,T]}^{4M}=E\sup_{0=t_0<t_1<...<t_m=T}\big(\sum_{0\leq i<m}|S_N(t_{i+1})\\
&-W_N(t_{i+1})-S_N(t_{i})+W_N(t_i)|^p\big)^{4M/p}\leq 2^{\frac {4M}p-1}(J^{(1)}_N+2^{4M-1}(J_N^{(2)}+J_N^{(3)}))\nonumber
\end{eqnarray}
where $p\in(2,3)$,
\begin{eqnarray*}
&J_N^{(1)}=E\sup_{0=t_0<t_1<...<t_m= T}\big(\sum_{i:\, t_{i+1}-t_i>N^{-(1-\al)}}|S_N(t_{i+1})-W_N(t_{i+1})\\
&-S_N(t_i)+W_N(t_i)|^p\big)^{4M/p},
\end{eqnarray*}
\[
J_N^{(2)}=E\sup_{0=t_0<t_1<...<t_m= T}\big(\sum_{i:\, t_{i+1}-t_i\leq N^{-(1-\al)}}|S_N(t_{i+1})-S_N
(t_i)|^p\big)^{4M/p},
\]
\[
J_N^{(3)}=E\sup_{0=t_0<t_1<...<t_m= T}\big(\sum_{i:\, t_{i+1}-t_i\leq N^{-(1-\al)}}|W_N(t_{i+1})-W_N
(t_i)|^p\big)^{4M/p},
\]
with $\al\in (0,1)$ which will be specified later on. Since there exist no more than $[TN^{1-\al}]$ intervals
$[t_{i},t_{i+1}]$ such that $t_{i+1}-t_i>N^{-(1-\al)}$  and by Proposition \ref{prop3.7},
\begin{equation}\label{4.2}
E\sup_{0\leq t\leq T}|S_N(t)-W_N(t)|^{4M}\leq C_MN^{-\del_1},
\end{equation}
we obtain that
\begin{equation}\label{4.3}
J_N^{(1)}\leq C_M^{1/2}2^{4M}T^{4M/p}N^{\frac {4M}p(1-\al)-\del_1}
\end{equation}
where $\al$ will be chosen so that
\begin{equation}\label{4.4}
1>\al>1-\frac {p\del_1}{4M}.
\end{equation}

In order to estimate $J_N^{(2)}$ observe that if $[t_iN]=[t_{i+1}N]$ then $S_N(t_i)=S_N(t_{i+1})$ and
defining $k_j=[t_{i_j}N]$, where $0=t_{i_0}<t_{i_1}<...<t_{i_n}=T$ is the maximal subsequence of $t_0,t_1,...,t_m$ such that $[t_{i_j}N]<[t_{i_{j+1}}N],\, j=0,1,...,n-1$, we obtain
\[
J_N^{(2)}=E\max_{0=k_0<k_1<...<k_n\leq TN}\big(\sum_{j:\,k_{j+1}-k_j\leq N^\al}|S_N(\frac {k_{j+1}}N)-S_N
(\frac {k_{j}}N)|^p\big)^{4M/p}.
\]
Taking into account that $\sum_{j:\, k_{j+1}-k_j\leq N^\al}|k_{j+1}-k_j|N^{-1}\leq T$ we obtain by Proposition
 \ref{hoelder2} with $\bfM=4M$ together with Lemma \ref{lem3.2} that
\begin{eqnarray}\label{4.5}
&J_N^{(2)}=E\max_{0=k_0<k_1<...<k_n\leq TN}\big(\sum_{j:\,k_{j+1}-k_j\leq N^\al}(|S_N(\frac {k_{j+1}}N)-S_N(\frac {k_{j}}N)|^p\\
&|\frac {k_{j+1}-k_j}N|^{-p(\frac 12-\be)}|\frac {k_{j+1}-k_j}N|^{p(\frac 12-\be)})\big)^{4M/p}\nonumber\\
&\leq\max_{0=k_0<k_1<...<k_n\leq TN}\big(\sum_{j:\, k_{j+1}-k_j\leq N^\al}(\frac {|k_{j+1}-k_j|}N)^{p(\frac 12-\be)}\big)^{4M/p}\nonumber\\
&\times E\max_{k,l:\, 0\leq k<l\leq TN,\,l-k\leq N^\al}\big(|S_N(\frac lN)-S_N(\frac kN)|^{4M}(\frac {|l-k|}N)^{-2M(1-2\be)}\big)\nonumber\\
&\leq T^{4M/p}N^{-4M(1-\al)(\frac 12-\frac 1p-\be)}EC^{4M}_{\frac 12,\be,N}\leq  K_{S,\frac 12,\be}(4M)T^{4M/p}N^{-4M(1-\al)(\frac 12-\frac 1p-\be)}\nonumber
\end{eqnarray}
where we estimate
\begin{eqnarray*}
&\sum_{j:\, k_{j+1}-k_j\leq N^\al}(\frac {|k_{j+1}-k_j|}N)^{p(\frac 12-\be)}\\
&\leq N^{-(1-\al)(p(\frac 12-\be)-1)}\sum_{j:\, k_{j+1}-k_j\leq N^\al}(\frac {|k_{j+1}-k_j|}N)\leq TN^{-p(1-\al)(\frac 12-\frac 1p-\be)},\\
\end{eqnarray*}
$C_{\frac 12,\be,N}$ comes from Proposition \ref{hoelder2},
$K_{S,\frac 12,\be}(4M)>0$ does not depend on $N$ and $\be$ is chosen so that $\frac 1{4M}<\be<\frac 12-p^{-1}$,
which is possible since $p>2$ and $M>\frac p{p-2}$.

Similarly, taking into account that $\sum_{0\leq i<m}(t_{i+1}-t_i)=T$ and relying on Theorem \ref{hoelder} we obtain
that
\begin{eqnarray}\label{4.6}
&J^{(3)}_N\leq\sup_{0=t_0<t_1<...<t_m= T}\big(\sum_{i:\, t_{i+1}-t_i\leq N^{-(1-\al)}}|t_{i+1}-t_i|^{p(\frac 12-\be)}
\big)^{4M/p}\\
&\times E\sup_{u,v:\, 0\leq u<v\leq T}(|W_N(v)-W_N(u)|^{4M}|v-u|^{-2M(1-2\be)})\nonumber\\
&\leq K_{W,\frac 12,\be}(M)T^{4M/p}N^{-4M(1-\al)(\frac 12-\frac 1p-\be)}\nonumber
\end{eqnarray}
where we estimate
\[
\sum_{i:\, t_{i+1}-t_i\leq N^{-(1-\al)}}|t_{i+1}-t_i|^{p(\frac 12-\be)}\leq TN^{-(1-\al)(p(\frac 12-\be)-1)},
\]
$K_{W,\frac 12,\be}(M)>0$ does not depend on $N$ and $\be$ is chosen again to satisfy $\frac 1{4M}<\be<\frac 12-\frac 1p$.
Finally, we derive (\ref{2.8}) for $\nu=1$ from (\ref{4.1}) and (\ref{4.3})--(\ref{4.6}).

\subsection{Supremum norm estimates for iterated sums}\label{subsec4.2}

The supremum norm moment estimates for iterated sums under our conditions are carried out similarly
to Section 4.1 in \cite{FK}, so we will only indicate the main points here.
Set $m_N=[N^{1-\ka}]$ with a small $\ka>0$ which will be chosen later
on, $\nu_N(l)=\max\{ jm_N:\, jm_N<l\}$ if $l>m_N$ and
\[
R_i(k)=R_i(k,N)=\sum_{l=(k-1)m_N}^{km_N-1}\xi_i(l)\,\,\mbox{for}\,\, k=1,2,...,\iota_N(T)
\]
 where $\iota_N(t)=[[Nt]m_N^{-1}]$.
For $1\leq i,j\leq d$ define
\begin{equation*}
\bbU_N^{ij}(t)=N^{-1}\sum_{l=m_N}^{\iota_N(t)m_N-1}\xi_j(l)\sum_{k=0}^{\nu_N(l)}\xi_i(k)=N^{-1}\sum_{1<l\leq\iota_N(t)}
R_j(l)\sum_{k=0}^{(l-1)m_N-1}\xi_i(k).
\end{equation*}
Set also
\[
\bar\bbS_N^{ij}(t)=\bbS_N^{ij}(t)-t\sum_{l=1}^\infty E(\xi_i(0)\xi_j(l))
\]
where the series converges absolutely in view of (\ref{2.4}) and (\ref{3.14}).
In the same way as in the proof of Lemma 4.1 from \cite{FK} we obtain relying on Lemma \ref{lem3.2}
that for all $i,j=1,...,d$ and $N\geq 1$,
\begin{equation}\label{4.7}
E\sup_{0\leq t\leq T}|\bar\bbS_N^{ij}(t)-\bbU_N^{ij}(t)|^{2M}\leq C_{SU}(M)N^{-M\min(\ka,1-\ka)}
\end{equation}
where $C_{SU}(M)>0$ does not depend on $N$.
The proof here differs from \cite{FK} only in the use of Lemma \ref{lem3.2} in place of Lemma 3.4 there and
in estimating $|E(\xi_i(k)\xi_j(l))|$ by means of (\ref{3.14}) in place of using simpler arguments based on
boundedness and $\phi$-mixing assumptions there.

Now set
\[
S^i_N(t)=N^{-1/2}\sum_{0\leq k<[Nt]}\xi_i(k),\,\, i=1,...,d
\]
and observe that
\[
\bbU^{ij}_N(t)=\sum_{2\leq l\leq\iota_N(t)}\big(S^j_N\big(\frac {lm_N}N\big)-S^j_N\big(\frac {(l-1)m_N}N\big)\big)S^i_N
\big(\frac {(l-1)m_N}N\big).
\]
Define
\[
\bbV^{ij}_N(t)=\sum_{2\leq l\leq\iota_N(t)}\big(W^j_N\big(\frac {lm_N}N\big)-W^j_N\big(\frac {(l-1)m_N}N\big)\big)W^i_N
\big(\frac {(l-1)m_N}N\big)
\]
where $W_N=(W^1_N,...,W^d_N)$ is the $d$-dimensional Brownian motion with the covariance matrix $\vs$ (at the time 1)
appearing in (\ref{2.6}). Then
\begin{eqnarray}\label{4.8}
&\sup_{0\leq t\leq T}|\bbU^{ij}_N(t)-\bbV^{ij}_N(t)|\leq\sum_{2\leq l\leq\iota_N(T)}\big(\big(\big\vert S^j_N\big(\frac {lm_N}N\big)-W^j_N\big(\frac {lm_N}N\big)\big\vert\\
&+\big\vert S^j_N\big(\frac {(l-1)m_N}N\big)-W^j_N\big(\frac {(l-1)m_N}N\big)\big\vert\big)\big\vert
S^i_N\big(\frac {(l-1)m_N}N\big)\big\vert\nonumber\\
&+\big\vert W^j_N\big(\frac {lm_N}N\big)-W^j_N\big(\frac {(l-1)m_N}N\big)\big\vert\big\vert  S^i_N\big(\frac {(l-1)m_N}N\big)-W^i_N\big(\frac {(l-1)m_N}N\big)\big\vert\big)\nonumber\\
&\leq\iota_N(T)\big(\max_{1\leq l\leq\iota_N(T)}\vert S^j_N\big(\frac {lm_N}N\big)-W^j_N\big(\frac {lm_N}N\big)\big\vert
\max_{1\leq l\leq\iota_N(T)}\big\vert S^i_N\big(\frac {lm_N}N\big)\big\vert\nonumber\\
&+\max_{1\leq l\leq\iota_N(T)}\big\vert W^j_N\big(\frac {lm_N}N\big)-W^j_N\big(\frac {(l-1)m_N}N\big)\big\vert\nonumber\\
&\times\max_{1\leq l\leq\iota_N(T)}\big\vert  S^i_N\big(\frac {lm_N}N\big)-W^i_N\big(\frac {lm_N}N\big)\big\vert\big).\nonumber
\end{eqnarray}
Now relying on (\ref{3.3}), Proposition \ref{prop3.7}, the Cauchy--Schwarz inequality and the standard moment estimates for
the Brownian motion we obtain from (\ref{4.8}) that
\begin{equation}\label{4.9}
E\sup_{0\leq t\leq T}|\bbU^{ij}_N(t)-\bbV^{ij}_N(t)|^{2M}\leq C_{UV}(M)N^{-\frac 12(\del_1-4M\ka)}
\end{equation}
where $C_{UV}(M)>0$ does not depend on $N$, $\del_1>0$ is the same as in Proposition \ref{prop3.7} and we take $\ka<\frac {\del_1}{4M}$.

Next, relying on the standard moment estimates for stochastic integrals in the same way as in \cite{FK} we obtain that
\begin{equation}\label{4.10}
E\sup_{0\leq t\leq T}|\int_0^tW_N^i(s)dW^j_N(s)-\bbV^{ij}_N(t)|^{2M}\leq C_{WV}(M)N^{-(M-1)\ka}
\end{equation}
for some $C_{WV}(M)>0$ which does not depend on $N$. It follows from (\ref{4.7})--(\ref{4.10}) that
 \begin{equation}\label{4.11}
E \max_{1\leq i,j\leq d}\sup_{0\leq t\leq T}|\bbS^{ij}_N(t)-\bbW^{ij}_N(t)|^{2M}\leq C_{SW}(M)N^{-\del_2}
 \end{equation}
for some $C_{SW}(M)>0$ and $\del_2>0$ which do not depend on $N$.

\subsection{Variational norm estimates for iterated sums}\label{subsec4.3}

For $0\leq s<t\leq T$ and $i,j=1,...,d$ set
\[
\bar\bbS_N^{ij}(s,t)=N^{-1}\sum_{[sN]\leq k<l<[Nt]}\xi_i(k)\xi_j(l)-(t-s)\sum_{l=1}^\infty E(\xi_i(0)\xi_j(l))
\]
\[
\mbox{and}\,\,\,\bar\bbW_N^{ij}(s,t)=\int_s^t(W^i_N(u)-W^i_N(s))dW_N^j(u).
\]
Hence,
\[
\bbS_N^{ij}(s,t)-\bbW_N^{ij}(s,t)=\bar\bbS_N^{ij}(s,t)-\bar\bbW_N^{ij}(s,t).
\]
Now for $0\leq T$,
\begin{eqnarray}\label{4.12}
&E\sup_{0=t_0<t_1<...<t_m=T}(\sum_{0\leq q<m} |\bbS_N^{ij}(t_q,t_{q+1})- \bbW_N^{ij}(t_q,t_{q+1})|^{p/2})^{4M/p}\\
&\leq 2^{\frac {4M}p-1}(\bbJ_N^{(1)}+2^{2M-1}(\bbJ_N^{(2)}+\bbJ_N^{(3)}))\nonumber
\end{eqnarray}
where $p\in(2,3)$,
\begin{equation*}
\bbJ_N^{(1)}=E\sup_{0=t_0<t_1<...<t_m= T}\big(\sum_{l:\, t_{l+1}-t_l>N^{-(1-\al)}}|\bbS_N^{ij}(t_l,t_{l+1})-
\bbW_N^{ij}(t_l,t_{l+1})|^{p/2}\big)^{4M/p},
\end{equation*}
\[
\bbJ_N^{(2)}=E\sup_{0=t_0<t_1<...<t_m= T}\big(\sum_{l:\, t_{l+1}-t_l\leq N^{-(1-\al)}}|\bar\bbS_N^{ij}(t_l,t_{l+1})|
^{p/2}\big)^{4M/p}
\]
and
\[
\bbJ_N^{(3)}=E\sup_{0=t_0<t_1<...<t_m= T}\big(\sum_{l:\, t_{l+1}-t_l\leq N^{-(1-\al)}}|\bar\bbW_N(t_l,t_{l+1})|^{p/2}\big)^{4M/p}.
\]

Observe that
\[
\bbS^{ij}_N(s,t)=\bbS^{ij}_N(t)-\bbS^{ij}_N(s)-S^i_N(s)(S^j_N(t)-S^j_N(s))
\]
\[
\mbox{and}\,\,\,\bbW^{ij}_N(s,t)=\bbW^{ij}_N(t)-\bbW^{ij}_N(s)-W^i_N(s)(W^j_N(t)-W^j_N(s)).
\]
Hence, by (\ref{4.11}), Lemma \ref{lem3.2}, Proposition \ref{prop3.7} and the Cauchy-Schwarz inequality,
\begin{eqnarray}\label{4.13}
&E\sup_{0\leq u<v\leq T}|\bbS^{ij}_N(u,v)-\bbW^{ij}_N(u,v)|^{2M}\\
&\leq 4^{2M-1}E\sup_{0\leq v\leq T}|\bbS^{ij}_N(v)-\bbW^{ij}_N(v)|^{2M}\nonumber\\
&+2^{6M}(E\sup_{0\leq v\leq T}|S^j_N(v)-W^j_N(v)|^{4M})^{1/2}(E\sup_{0\leq v\leq T}|S^i_N(v)|^{4M}\nonumber\\
&+E\sup_{0\leq v\leq T}|W^i_N(v)|^{4M})^{1/2}\leq\bbC_{\bbS\bbW}(M)N^{-\del_3}\nonumber
\end{eqnarray}
where $\del_3=\min(\frac {\del_1}2,\del_2)$ and $\bbC_{\bbS\bbW}(M)>0$ does not depend on $N$.
Since there exists no more than $[TN^{1-\al}]$ intervals $[t_l,t_{l+1}]\subset[0,T]$ with $t_{l+1}-t_l>
N^{-(1-\al)}$ and taking into account (\ref{4.13}) we obtain that
\begin{equation}\label{4.14}
\bbJ_N^{(1)}\leq\bbC_{\bbS\bbW}(M)T^{4M/p}N^{\frac {4M}p(1-\al)-\del_3}.
\end{equation}

In order to estimate $\bbJ^{(2)}_N$ we introduce
\[
\hat\bbS_N^{ij}(u,v)=\bbS_N^{ij}(u,v)-E\bbS_N^{ij}(u,v)=N^{-1}\sum_{[uN]\leq k<l<[vN]}(\xi_i(k)\xi_j(l)
-E(\xi_i(k)\xi_j(l)))
\]
and observe that
\begin{equation}\label{4.15}
|\hat\bbS_N^{ij}(u,v)-\bar\bbS_N^{ij}(u,v)|\leq\tilde C(v-u)
\end{equation}
where $\tilde C=2\sum_{l=1}^{\infty}|E(\xi_i(0)\xi_j(l))|<\infty$. Hence,
\begin{eqnarray}\label{4.16}
&\bbJ^{(2)}_N\leq 2^{2M}E\sup_{0=t_0<t_1<...<t_m= T}\big(\sum_{l:\, t_{l+1}-t_l\leq N^{-(1-\al)}}(|\hat\bbS_N^{ij}(t_l,t_{l+1})|
^{p/2}\\
&+\tilde C^{p/2}(t_{l+1}-t_l)^{p/2})\big)^{4M/p}\nonumber\\
&\leq 2^{2M}(\bbJ_N^{(4)}+\tilde C^{2M}\sup_{0=t_0<t_1<...<t_m=T}(\sum_{l:\, t_{l+1}-t_l\leq N^{-(1-\al)}}(t_{l+1}-t_l)^{p/2})^{4M/p}\nonumber\\
&\leq 2^{2M}(\bbJ_N^{(4)}+\tilde C^{2M}TN^{-2M(1-\al)(1-\frac 2p)})\nonumber
\end{eqnarray}
where we set
\[
\bbJ_N^{(4)}=E\sup_{0=t_0<t_1<...<t_m=T}(\sum_{l:\, t_{l+1}-t_l\leq N^{-(1-\al)}}|\hat\bbS_N^{ij}(t_l,t_{l+1})|^{p/2})
^{4M/p}
\]
and take into account that $p>2$ and  
\[
\sum_{l:\, t_{l+1}-t_l\leq N^{-(1-\al)}}(t_{l+1}-t_l)^{p/2}\leq TN^{-\frac p2(1-\al)(1-\frac 2p)}.
\]

Next, observe as in Section \ref{subsec4.1} that if $[t_lN]=[t_{l+1}N]$ then $\hat\bbS_N^{ij}(t_l,t_{l+1})=0$,
and so it suffices to consider the maximal subsequence $0=t_{l_0}<t_{l_1}<...<t_{l_n}=T$ of $t_0,t_1,...,t_m$
such that $[t_{l_r}N]<[t_{l_{r+1}}N]$, $r=0,1,...,n-1$ and setting $k_r=[t_{l_r}N]$ we obtain
\[
\bbJ_N^{(4)}=E\max_{0\leq k_0<k_1<...<k_n\leq[TN]}(\sum_{l:\,k_{l+1}-k_l\leq N^\al}|\hat\bbS_N^{ij}
(\frac {k_l}N,\frac {k_{l+1}}N)|^{p/2})^{4M/p}.
\]
Applying (\ref{3.4}) of Lemma \ref{lem3.2} to each $\hat\bbS_N^{ij}(\frac {k_l}N,\frac {k_{l+1}}N)$ and then using
Proposition \ref{hoelder2} with $\bfM=4M$ we see that
\begin{eqnarray}\label{4.17}
&\bbJ_N^{(4)}\leq\max_{0\leq k_0<k_1<...<k_n\leq[TN]}(\sum_{l:\,k_{l+1}-k_l\leq N^\al}(\frac {k_{l+1}-k_l}N)^{\frac p2(1-\be)})^{4M/p}\\
&\times E\max_{k,l:\, 0\leq k<l\leq TN}(|\hat\bbS_N^{ij}(\frac kN,\frac lN)|^{2M}|\frac {l-k}N|^{-2M(1-\be)})\nonumber\\
&\leq K_{\bbS,\be}(M)T^{\frac {4M}p}N^{-2M(1-\al)(1-\frac 2p-\be)}\nonumber
\end{eqnarray}
where $K_{\bbS,\be}(M)>0$ does not depend on $N$ and we choose $\be<1-\frac 2p$ which is possible since $p>2$.

Taking into account that $\sum_{0\leq l\leq m}(t_{l+1}-t_l)=T$ and relying on Theorem \ref{hoelder} we obtain
similarly to the above that
\begin{eqnarray}\label{4.18}
&\bbJ_N^{(3)}\leq\sup_{u,v:\, 0\leq u<v\leq T}(\sum_{l:\, t_{l+1}-t_l\leq N^{-(1-\al)}}|t_{l+1}-t_l|^{\frac p2(1-\be)})^{4M/p}\\
&\times E\sup_{u,v:\, 0\leq u<v\leq T}(|\bar\bbW_N^{ij}(u,v)|^{2M}|v-u|^{-2M(1-\be)})\nonumber\\
&\leq K_{\bbW,\be}(M)T^{4M/p}N^{-2M(1-\al)(1-\frac 2p-\be)}\nonumber
\end{eqnarray}
where $K_{\bbW,\be}(M)>0$ does not depend on $N$. Finally, (\ref{2.8}) follows for $\nu=2$ from
(\ref{4.12})--(\ref{4.18}).

\section{Continuous time case: straightforward setup}\label{sec5}\setcounter{equation}{0}
\subsection{Discretization}\label{subsec5.1}
Set $\eta(k)=\int_{k}^{k+1}\xi(s)ds,\, k=0,1,...,[NT]-1$,
\[
Z_N(s,t)=N^{-1/2}\sum_{sN\leq k<[tN]}\eta(k)\,\,\,\mbox{and}\,\,\,\bbZ_N^{ij}(s,t)=N^{-1}\sum_{sN\leq l<k<[tN]}\eta_i(l)\eta_j(k)
\]
where the sums over empty set are considered to be zero and, again, $Z_N(t)=Z_N(0,t)$, $\bbZ^{ij}_N(t)=\bbZ_N^{ij}(0,t)$. Observe that for $a\geq 1$,
\begin{eqnarray*}
&\|\eta(k)-E(\eta(k)|\cF_{k-l,k+l})\|_a\leq\int_k^{k+1}\|\xi(s)-E(\xi(s)|\cF_{k-l,k+l})\|_ads\\
&\leq 2\int_k^{k+1}\|\xi(s)-E(\xi(s)|\cF_{k-l+1,k+l-1})\|_ads\leq 2\be(a,l-1)
\end{eqnarray*}
where $\be$ defined by (\ref{2.15}) satisfies (\ref{2.4}) by the assumption of Theorem \ref{thm2.2}. Hence,
similarly to Lemma \ref{lem3.3} the limits
\begin{eqnarray*}
&\lim_{N\to\infty}N^{-1}\sum_{0\leq l<k<[tN]}E(\eta_i(l)\eta_j(k))=t\sum_{k=1}^\infty E(\eta_i(0)\eta_j(k))\\
&=\lim_{N\to\infty}N^{-1}\int_0^{tN}du\int_0^{[u]}E(\xi_i(v)\xi_j(u))dv=t(\int_0^\infty E(\xi_i(0)\xi_j(u))du-EF_{ij})
\end{eqnarray*}
exist, where $F_{ij}=\int_0^1du\int_0^u\xi_i(v)\xi_j(u)dv$, and
\[
|N^{-1}\sum_{0\leq l<k<[tN]}E(\eta_i(l)\eta_j(k))-\sum_{1\leq k<[tN]} E(\eta_i(0)\eta_j(k))|=O(N^{-1}).
\]

Since the sequence $\eta(k),\, k\geq 0$ satisfies the conditions of Theorem \ref {thm2.1}, it follows
that the process $\xi$ can be redefined preserving its distributions on a sufficiently rich probability space which
contains also a $d$-dimensional Brownian motion $\cW$ with the covariance matrix $\vs$ so that the rescaled Brownian motion
$W_N(t)=N^{-1/2}\cW(Nt)$ satisfies
\begin{equation}\label{5.1}
E\| Z_N-W_N\|^{4M}_{p,[0,T]}\leq C_Z(M)N^{-\ve}\,\,\mbox{and}\,\,E\| \bbZ_N-\bbW^\eta_N\|^{2M}_{\frac p2,[0,T]}\leq C_\bbZ(M)N^{-\ve}
\end{equation}
where
\[
\bbW_N^\eta(s,t)=\int_s^tW_N(s,v)\otimes dW_N(v)+(t-s)\sum_{l=1}^\infty E(\eta(0)\otimes\eta(l))=\bbW_N(s,t)-(t-s)F,
\]
$ F=(F_{ij})$, $\bbW_N=\bbW_N^{(2)}$ is defined as in Section \ref{subsec2.1} but with the matrix $\Gam$ defined in Section \ref{subsec2.2}, 
$p\in(2,3)$, $\ve>0$ and $C_Z(M),\, C_\bbZ(M)>0$ do not depend on $N$.
In fact, Theorem \ref{thm2.1} gives directly only that the sequence $\eta(k),\, k\geq 0$ above can be redefined so that $Z_N$ and $\bbZ_N$
constructed by this sequence satisfy (\ref{5.1}). But if $\eta(k),\, k\geq 0$ is the above sequence and $\tilde\eta(k),\,k\geq 0$
is the redefined sequence with $Z_N$ and $\bbZ_N$ constructed by it satisfy (\ref{5.1}) then we have two pairs of processes $(\xi,\eta)$ and $(\tilde\eta,\cW)$. 
Since the second marginal of the first pair has the same distribution as the first
 marginal of the second pair, we
can apply Lemma A1 of \cite{BP} which yields three processes $(\hat\xi,\hat\eta,\hat\cW)$ such that the joint distributions of
 $(\hat\xi,\hat\eta)$ and of $(\hat\eta,\hat\cW)$ are the same as of $(\xi,\eta)$ and of $(\tilde\eta,\cW)$, respectively. Hence,
 we have (\ref{5.1}) for $\hat\xi,\hat\eta,\hat\cW$ in place of $\xi,\eta,\cW$ and we can and will assume in what follows that (\ref{5.1})
 holds true for $\xi,\eta,\cW$ with $Z,\bbZ,W_N$ and $\bbW_N$ defined above. Thus, in order to derive Theorem \ref{thm2.2} for $\nu=1$
 and $\nu=2$ in this setup it remains to estimate 
 \begin{eqnarray*}
 &E\| S_N-W_N\|^{4M}_{p,[0,T]}=E\| Z_N-W_N\|^{4M}_{p,[0,T]}\,\,\,\mbox{and}\\
 & E\|\bbS_N-\bbW^\eta_N\|^{2M}_{p/2,[0,T]}\leq 2^{4M-1}(E\|\bbS_N-\bbZ_N-EF\|^{2M}_{p/2,[0,T]}+E\|\bbZ_N-\bbW^\eta_N\|^{2M}_{p/2,[0,T]})
 \end{eqnarray*}
  where
 \begin{eqnarray*}
& S_N(t)=\bbS_N^{(1)}(t)=N^{-1/2}\int_0^{tN}\xi(s)ds\,\,\,\mbox{and}\,\,\,\bbS_N=(\bbS^{ij}_N,\,1\leq i,j\leq d)\\
 &\mbox{with}\,\,\,\bbS^{ij}_N(t)=N^{-1}\int_0^{tN}\xi_j(u)du\int_0^u\xi_i(v)dv.
 \end{eqnarray*}

 \subsection{Estimates for integrals}\label{subsec5.2}

 First, observe that
 \[
 \sup_{0\leq t\leq T}|S_N(t)-Z_N(t)|\leq N^{-1/2}\max_{0\leq k<N}\int_k^{k+1}|\xi(s)|ds,
 \]
 and so
 \begin{equation}\label{5.2}
 E\sup_{0\leq t\leq T}|S_N(t)-Z_N(t)|^{4M}\leq N^{-2M+1}E|\xi(0)|^{4M}.
 \end{equation}
 Next, let $0=t_0<t_1<...<t_m=T$ and set
 \[
 I_i=|Z_N(t_{i+1})-S_N(t_{i+1})-(Z_N(t_i)-S_N(t_i))|.
 \]
 Then
 \[
 \sum_{0\leq i<m}I_i^p\leq\cI_1+\cI_2
 \]
 where
 \[
 \cI_1=\sum_{i:\,[t_iN]<[t_{i+1}N]}I_i^p\,\,\,\mbox{and}\,\,\,\cI_2=\sum_{i:\,[t_iN]=[t_{i+1}N]}I_i^p.
 \]

 Observe that the sum in $\cI_1$ contains no more than $[NT]$ terms, and so
 \begin{equation}\label{5.3}
 \cI_1\leq NT2^p\sup_{0\leq t\leq T}|Z_N(t)-S_N(t)|^p.
 \end{equation}
 Since $\sum_{0\leq i<m}|t_{i+1}-t_i|=T$ and $I_i=N^{-1/2}|\int_{t_iN}^{t_{i+1}N}\xi(s)ds|$ if $[t_iN]=[t_{i+1}N]$, we
 obtain for any $p\in(2,3)$ that
 \begin{eqnarray}\label{5.4}
 &\cI_2=N^{-p/2}\sum_{i:\,[t_iN]=[t_{i+1}N]}|\int_{t_iN}^{t_{i+1}N}\xi(s)ds|^p\\
 &\leq N^{-p/2}\sum_{i:\,[t_iN]=[t_{i+1}N]}(t_{i+1}-t_i)^{p-1}N^{p-1}\int_{[t_iN]}^{[t_iN]+1}|\xi(s)|^pds\nonumber\\
 &\leq N^{-\frac p2+1}\max_{0\leq k<NT}\int_k^{k+1}|\xi(s)|^pds.\nonumber
 \end{eqnarray}
 It follows from (\ref{5.2})--(\ref{5.4}) that for $M>\frac p{p-2}$,
 \begin{eqnarray}\label{5.5}
 &E\| Z_N-S_N\|^{4M}_{p,[0,T]}\leq 2^{\frac {4M}p-1}(E\sup_{0=t_0<t_1<...<t_m=T}\cI_1^{4M/p}\\
 &+E\sup_{0=t_0<t_1<...<t_m=T}\cI_2^{4M/p})\leq 2^{4M(\frac 1p+1)-1}N^{-2M(1-\frac 2p)+1}E|\xi(0)|^{4M}\nonumber\\
 &+N^{-2M(1-\frac 2p)}T^{\frac {4M}p-1}E\max_{0\leq k<NT}\int_k^{k+1}|\xi(s)|^{4M}ds.
 \nonumber\end{eqnarray}
Since
\[
 E\max_{0\leq k<NT}\int_k^{k+1}|\xi(s)|^{4M}ds\leq\sum_{0\leq k<NT}\int_k^{k+1}E|\xi(s)|^{4M}ds\leq NTE|\xi(0)|^{4M},
 \]
 we obtain from (\ref{5.5}) that
 \begin{equation}\label{5.6}
 E\| S_N-Z_N\|^{4M}_{p,[0,T]}\leq C_{SZ}(M)N^{-2M(1-\frac 2p)+1}
 \end{equation}
 where $C_{SZ}(M)=2^{\frac {4M}p-1}E|\xi(0)|^{4M}(2^{4M}T^{4M}+T^{4M/p})$ which proves Theorem \ref{thm2.2} for $\nu=1$ since we
 assume that $M>\frac p{p-2}$.

 \subsection{Estimates for iterated integrals}\label{subsec5.3}

First, we write
\begin{eqnarray}\label{5.7}
&\bbS^{ij}_N(t)=\frac 1N\int_0^{tN}\xi_j(u)du\int_0^u\xi_i(v)dv=\frac 1N\int_0^{[tN]}\xi_j(u)du\int_0^{[u]}\xi_i(v)dv\\
&+\frac 1N\int_0^{[tN]}\xi_j(u)du\int_{[u]}^u\xi_i(v)dv\nonumber\\
&+\frac 1N\int_{[tN]}^{tN}\xi_j(u)du\int_0^u\xi_i(v)dv=\bbZ^{ij}_N(t)+\frac 1N\sum_{k=0}^{[tN]-1}F_{ij}\circ\vt^k+J_N(t)\nonumber
\end{eqnarray}
where $F_{ij}\circ\vt^k=\int_k^{k+1}du\int_k^u\xi_i(v)\xi_j(u)dv$ and $J_N(t)=\frac 1N\int_{[tN]}^{tN}\xi_j(u)du\int_0^{u}\xi_i(v)dv$. Observe that
$E(F_{ij}\circ\vt^k)=EF_{ij}$ for all $k\geq 0$ and
\begin{eqnarray}\label{5.8}
&J_N(t)\leq \frac 1N\int_{[tN]}^{tN}|\xi_j(u)|du\sup_{0\leq u\leq tN}|\int_0^u\xi_i(v)dv|\\
&\leq\frac 1N\max_{0\leq k\leq[tN]}\int_k^{k+1}|\xi_j(u)|du(\max_{0\leq k\leq[tN]}|\sum_{l=0}^k\eta_i(l)|\nonumber\\
&+\max_{0\leq k\leq[tN]}\int_k^{k+1}|\xi_i(v)|dv).
\nonumber\end{eqnarray}
It follows from (\ref{5.7}) and (\ref{5.8}) together with Lemma \ref{lem3.2} and the Cauchy--Schwarz inequality that for any $t\in [0,T]$,
\begin{eqnarray}\label{5.9}
&E\sup_{0\leq s\leq t}|\bbS^{ij}_N(s)-sEF_{ij}-\bbZ_N^{ij}(s)|^{2M}\\
&\leq 2^{4M}N^{-2M}\big(\sum_{0\leq k\leq[tN]}E(\int_k^{k+1}|\xi_j(u)|du)^{4M}\big)^{1/2}\nonumber\\
&\times\big(E\max_{0\leq k\leq[tN]}|\sum_{l=0}^k\eta_i(l)|^{4M}\nonumber\\
&+\sum_{0\leq k\leq[tN]}E(\int_k^{k+1}|\xi_i(v)|dv)^{4M}\big)^{1/2}\nonumber\\
&+2^{2M}N^{-2M}E\max_{0\leq k<[tN]}|\sum_{l=0}^k(F_{ij}\circ\vt^l-EF_{ij})|^{2M}\nonumber\\
&\leq C_{\bbS F\bbZ}(M)(N^{-M+\frac 12}t^{M+\frac 12}+N^{-2M+1}t+N^{-M}t^M)
\nonumber\end{eqnarray}
where $C_{\bbS F\bbZ}(M)>0$ does not depend on $N$ and $t$.

Next we observe that
\begin{eqnarray}\label{5.10}
&\bbS_N^{ij}(s,t)=\bbS_N^{ij}(t)-\bbS_N^{ij}(s)-S_N^i(s)(S^j_N(t)-S^j_N(s))\\
&\mbox{and}\,\,\,\bbZ_N^{ij}(s,t)=\bbZ_N^{ij}(t)-\bbZ_N^{ij}(s)-Z_N^i(s)(Z^j_N(t)-Z^j_N(s)).\nonumber
\end{eqnarray}
 By (\ref{3.3}) of Lemma \ref{lem3.2} together with (\ref{5.2}),
\begin{equation}\label{5.11}
E\sup_{0\leq t\leq T}|Z^i_N(t)|^{4M}\leq\tilde C_Z(M)\,\,\mbox{and}\,\, E\sup_{0\leq t\leq T}|S^i_N(t)|^{4M}\leq\tilde C_S(M)
\end{equation}
where $\tilde C_Z(M),\tilde C_S(M)>0$ do not depend on $N$ and $i$. This together with (\ref{5.2}), (\ref{5.9}), (\ref{5.10}) and the Cauchy--Schwarz
inequality imply that
\begin{eqnarray}\label{5.12}
&E\sup_{0\leq s\leq t\leq T}|\bbS^{ij}_N(s,t)-(t-s)EF_{ij}-\bbZ_N^{ij}(s,t)|^{2M}\\
&\leq 6^{2M}\big(E\sup_{0\leq t\leq T}|\bbS^{ij}_N(t)-tEF_{ij}-\bbZ_N^{ij}(t)|^{2M}\nonumber\\
&+(E\sup_{0\leq t\leq T}|S_N^i(t)|^{4M})^{1/2} (E\sup_{0\leq t\leq T}|S^j_N(t)-Z^j_N(t)|^{4M})^{1/2}\nonumber\\
&+(E\sup_{0\leq t\leq T}|Z_N^j(t)|^{4M})^{1/2} (E\sup_{0\leq t\leq T}|S^i_N(t)-Z^i_N(t)|^{4M})^{1/2}\big)\nonumber\\
&\leq\bbC_{\bbS F\bbZ}(M)N^{-M+\frac 12}\nonumber
\end{eqnarray}
where $\bbC_{\bbS F\bbZ}(M)>0$ does not depend on $N$.

\subsection{Variational norm estimates for iterated integrals}\label{subsec5.4}

Observe that by (\ref{3.4}) of Lemma \ref{lem3.2},
\begin{equation}\label{5.13}
E|\bbZ_N^{ij}(s,t)-E\bbZ_N^{ij}(s,t)|^{2M}\leq\bbC_\bbZ(M)(t-s)^{2M}
\end{equation}
where $\bbC_\bbZ(M)>0$ does not depend on $N$ and $t\geq s\geq 0$. Next, by the stationarity of the sequence $\eta(n)$,
\[
E\bbZ_N^{ij}(s,t)=N^{-1}\sum_{sN\leq k\leq[tN]}E(\eta_i(0)\eta_j(k-l)),
\]
and so
\[
|E\bbZ_N^{ij}(s,t)|\leq (t-s)\sum_{k=1}^\infty|E(\eta_i(0)\eta_j(k)|
\]
and using the same estimates as in (\ref{3.14}) we conclude that the series in the right hand side above converges. This together
with (\ref{5.13}) yields
\begin{equation}\label{5.14}
E|\bbZ^{ij}_N(s,t)|^{2M}\leq\tilde\bbC_\bbZ(M)(t-s)^{2M}
\end{equation}
where $\tilde\bbC_\bbZ(M)>0$ does not depend on $N$ and $t\geq s\geq 0$.

Since $\xi$ is a stationary process
\[
E|\bbS_N^{ij}(s,t)-(t-s)EF_{ij}|^{2M}=E|\bbS_N^{ij}(0,t-s)-(t-s)EF_{ij}|^{2M},
\]
and so by (\ref{5.9}) and (\ref{5.14}),
\begin{equation}\label{5.15}
E|\bbS_N^{ij}(s,t)-(t-s)EF_{ij}|^{2M}\leq\bbC_{\bbS F}(M)(t-s)^{2M},
\end{equation}
provided $t-s\geq 1/N$, where $\bbC_{\bbS F}(M)>0$ does not depend on $N$ and $t\geq s\geq 0$.
When $0\leq t-s<N^{-1}$ we will use the simple estimate
\[
|\bbS^{ij}_N(s,t)|\leq N^{-1}(\int_{sN}^{tN}|\xi_i(u)|du)(\int_{sN}^{tN}|\xi_j(v)|dv),
\]
and so by the H\" older inequality,
\begin{eqnarray}\label{5.16}
&\,\,\,\,\, E|\bbS^{ij}_N(s,t)|^{2M}\leq N^{-2M}(E(\int_{sN}^{tN}|\xi_i(u)|du)^{4M})^{1/2}(E(\int_{sN}^{tN}|\xi_j(v)|dv)^{4M})^{1/2}\\
&\leq N^{2M}(t-s)^{4M}(E|\xi_i(0)|^{4M})^{1/2}(E|\xi_j(0)|^{4M})^{1/2}\nonumber\\
&\leq(t-s)^{2M}(E|\xi_i(0)|^{4M})^{1/2}(E|\xi_j(0)|^{4M})^{1/2}.\nonumber
\end{eqnarray}

Now we are ready to estimate
\begin{eqnarray}\label{5.17}
&E\|\bbS_N^{ij}(s,t)-(t-s)EF_{ij}-\bbZ_N^{ij}(s,t)\|^{2M}_{p/2,[0,T]}\\
&=E\sup_{0=t_0<t_1<...<t_m=T}\big(\sum_{0\leq l<m}|\bbS_N^{ij}(t_l,t_{l+1})-(t_{l+1}-t_l)EF_{ij}\nonumber\\
&-\bbZ^{ij}_N(t_l,t_{l+1})|^{p/2}\big)^{4M/p}\nonumber\\
&\leq 3^{4M/p}(E\cJ_1^{4M/p}+2^{2M}(E\cJ_2^{4M/p}+E\cJ_3^{4M/p})\nonumber
\end{eqnarray}
where
\begin{eqnarray*}
&\cJ_1=\sup_{0=t_0<t_1<...<t_m=T}\big(\sum_{l:t_{l+1}-t_l\geq N^{-(1-\al)}}|\bbS_N^{ij}(t_l,t_{l+1})\\
&-(t_{l+1}-t_l)EF_{ij}-\bbZ^{ij}_N(t_l,t_{l+1})|^{p/2}\big),
\end{eqnarray*}
\[
\cJ_2=\sup_{0=t_0<t_1<...<t_m=T}\big(\sum_{l:t_{l+1}-t_l<N^{-(1-\al)}}|\bbS_N^{ij}(t_l,t_{l+1})-(t_{l+1}-t_l)EF_{ij}|^{p/2}\big),
\]
\[
\mbox{and}\,\,\,\cJ_3=\sup_{0=t_0<t_1<...<t_m=T}\big(\sum_{l:t_{l+1}-t_l<N^{-(1-\al)}}|\bbZ^{ij}_N(t_l,t_{l+1})|^{p/2}\big)
\]
where $\al<1$ will be specified later on.

Since there are no more than $TN^{1-\al}$ terms in the sum for $\cJ_1$ we obtain by (\ref{5.12}) that
\begin{eqnarray}\label{5.18}
&E\cJ_1^{4M/p}\leq T^{4M/p}N^{4M(1-\al)/p}E\sup_{0\leq s\leq t\leq T}|\bbS^{ij}_N(s,t)-(t-s)EF_{ij}\\
&-\bbZ_N^{ij}(s,t)|^{2M}\leq\bbC_{\bbS F\bbZ}(M)T^{4M/p}N^{-M(1-4(1-\al)p^{-1})+1}.\nonumber
\end{eqnarray}
Next, applying Theorem \ref{hoelder}, which is possible in view of (\ref{5.10}),  (\ref{5.15}) and (\ref{5.16}), and taking into account that $\sum_l(t_{l+1}-t_l)\leq T$
we obtain similarly to (\ref{4.6}) that,
\begin{eqnarray}\label{5.19}
&E\cJ_2^{4M/p}\leq E\sup_{0=t_0<t_1<...<t_m=T}\big(2^{p/2}\sum_{l:t_{l+1}-t_l<N^{-(1-\al)}}\\
&|t_{l+1}-t_l|^{p(1-\be)/2}\big(\frac {|\bbS_N^{ij}(t_l,t_{l+1})|^{p/2}}{|t_{l+1}-t_l|^{p(1-\be)/2}}+|t_{l+1}-t_l|^{p\be/2}|EF_{ij}|^{p/2}\big)\big)^{4M/p}\nonumber\\
&\leq 2^{4M}T^{4M/p}N^{-2M(1-\al)(1-\be-\frac 2p)}
\big(E\sup_{0\leq s<t\leq N^{-(1-\al)}}\frac {|\bbS_N^{ij}(t_l,t_{l+1})|^{2M}}{|t_{l+1}-t_l|^{2M(1-\be)}}\nonumber\\
&+N^{-2M(1-\al)\be}|EF_{ij}|^{2M}\big)\leq\bbC_{\be,\bbS}(M)N^{-M(1-\al)(1-\be-\frac 2p)}\nonumber
\end{eqnarray}
where $\be>0$ will be chosen later on.

Finally, applying Proposition \ref{hoelder2}, which is possible in view of (\ref{5.10}) and (\ref{5.14}), and taking into account that
$N^{-1}\sum_l|k_{l+1}-k_l|\leq T$  we obtain similarly to (\ref{4.5}) that,
 \begin{eqnarray}\label{5.20}
&E\cJ_3^{4M/p}\leq E\max_{0=k_0<k_1<...<k_m=[TN]}\sum_{l:k_{l+1}-k_l<N^{\al}}|\bbZ^{ij}_N(\frac {k_l}N,\frac {k_{l+1}}N)|\\
&\leq E\big(\max_{0=k_0<k_1<...<k_m=[TN]}(\sum_{l:k_{l+1}-k_l<N^{\al}}|\frac {k_{l+1}-k_l}N|^{p(1-\be)/2})^{4M/p})\nonumber\\
&\times\max_{0\leq k<m\leq[TN]}\frac {|\bbZ_N^{ij}(\frac kN,\frac mN)|^{2M}}{|\frac {m-k}N|^{2M(1-\be)}}\big)\leq\bbC_{\be,\bbZ}(M)N^{-2M(1-\al)(1-\be-\frac 2p)}.
\nonumber\end{eqnarray}
Choosing $\al$ and $\be$ so that $1>\al>1-\frac p4$ and $0<\be<1-\frac 2p$ we derive Theorem \ref{thm2.2} for $\nu=2$ from (\ref{5.1}) and
(\ref{5.17})--(\ref{5.20}) while its extension from $\nu=1$ and $\nu=2$ to $\nu>2$ will be provided in Section \ref{sec7}.

\section{Continuous time case: suspension construction}\label{sec6}\setcounter{equation}{0}

\subsection{Discretization}\label{subsec6.1}
Set again
\[
Z_N(s,t)=N^{-1/2}\sum_{sN\leq k<[tN]}\eta(k)\quad\mbox{and}
\]
\[
\bbZ_N^{ij}(s,t)=N^{-1}\sum_{sN\leq l<k<[tN]}\eta_i(l)\eta_j(k)
\]
with $\eta$ now defined in Section \ref{subsec2.3}. As before, we denote also $Z_N(t)=Z_N(0,t)$
and $\bbZ^{ij}_N(t)=\bbZ_N^{ij}(0,t)$. In view of (\ref{2.4}) and (\ref{2.18}) we see similarly to
Lemma \ref{lem3.3} that the limit
\[
\lim_{N\to\infty}N^{-1}\sum_{0\leq l<k<[tN]}E(\eta_i(l)\eta_j(k))=t\sum_{k=1}^\infty E(\eta_i(0)\eta_j(k))
\]
exists. Since the sequence $\eta$ satisfies the conditions of Theorem \ref{thm2.1},
we argue similarly to Section \ref{sec5} to conclude that the process $\xi$ can be redefined preserving its
distributions on a sufficiently rich probability space which contains also a $d$-dimensional Brownian motion
$\cW$ with the covariance matrix $\vs$ given by (\ref{2.19}) so that the rescaled Brownian motion $W_N(t)=
N^{-1/2}\cW(Nt)$ satisfies
\begin{equation}\label{6.1}
E\| Z_N-W_N\|^{4M}_{p,[0,T]}\leq C_Z(M)N^{-\ve}\,\,\mbox{and}\,\,E\| \bbZ_N-\bbW^\eta_N\|^{2M}_{\frac p2,[0,T]}\leq C_\bbZ(M)N^{-\ve}
\end{equation}
where
\begin{eqnarray}\label{6.1+}
&\bbW_N^\eta(s,t)=\int_s^tW(s,v)\otimes dW_N(v)+(t-s)\sum_{l=1}^\infty E(\eta(0)\otimes\eta(l))\\
&=\bbW_N(s,t)-(t-s)F,\, F=(F_{ij}),\, F_{ij}=E\int_0^{\tau(\om)}\xi_j(s,\om)ds\int_0^s\xi_i(u,\om)du,\nonumber
\end{eqnarray}
$\bbW_N=\bbW_N^{(2)}$ is defined as in Section \ref{subsec2.1} but with the matrix $\Gam$ defined in Section \ref{subsec2.3}, $p\in(2,3)$,
 $\ve>0$ and $C_Z(M),\, C_\bbZ(M)>0$ do not depend on $N$. We observe also that
\[
E\bbW_N(t)=\Gam t=\lim_{N\to\infty}E\bbS_N(t).
\]

Next, define $n(s)=n(s,\om)=0$ if $\tau(\om)>s$ and for $s\geq\tau(\om)$,
\[
n(s)=n(s,\om)=\max\{ k:\,\sum_{j=0}^{k-1}\tau\circ\vt^j(\om)\leq s\}.
\]
It was shown in Lemma 5.1 from \cite{FK} that for all $s,t\geq 0$,
\begin{equation}\label{6.2}
E|n(s\bar\tau)-s|^{4M}\leq K(M)s^{2M}\,\,\mbox{and}\,\, E\sup_{0\leq s\leq t}|n(s
\bar\tau)-s|^{4M}\leq K(M)t^{2M+1}
\end{equation}
where $K(M)>0$ does not depend on $s,t$ and, recall, $\tau$ is the ceiling function and $\bar\tau=E\tau$. In view of
the boundedness assumption (\ref{2.17}) the proof in our circumstances is the same as in \cite{FK}, only in place
of Lemma 3.4 there which relied on $\phi$-mixing we use Lemma \ref{lem3.2} here which is proved under our more general assumptions.
Now define
\begin{eqnarray*}
&U_N(s,t)=N^{-1/2}\sum_{n(s\bar\tau N)\leq k<n(t\bar\tau N)}\eta(k)\\
&\mbox{and}\,\,\bbU_N^{ij}(s,t)=N^{-1}\sum_{n(s\bar\tau N)\leq k<l<n(t\bar\tau N)}\eta_i(k)\eta_j(l)
\end{eqnarray*}
setting again $U_N(t)=U_N(0,t)$ and $\bbU^{ij}_N(t)=\bbU^{ij}(0,t)$. Next, we will compare $S_N(s,t)=\bbS_N^{(1)}(s,t)$
and $\bbS_N(s,t)=\bbS_N^{(2)}(s,t)$ with $U_N(s,t)$ and $\bbU_N(s,t)$ and then with $Z_N(s,t)$ and $\bbZ_N(s,t)$,
respectively, which in view of (\ref{6.1}) and (\ref{6.1+}) will provide the required variational norms estimates for $S_N-W_N$ and
for $\bbS_N-\bbW_N$.

\subsection{Estimates for sums and integrals}\label{subsec6.2}

Our strategy here will be similar to Section \ref{subsec5.2} above but the situation here is more complicated
in view of the need of two discretization approximations $\bbZ_N$ and $\bbU_N$ with the second one containing the
random number of terms. Introduce
\[
\hat S_N(s,t,(\om,u))=N^{-1/2}\int_{s\bar\tau N}^{t\bar\tau N}\xi(v,(\om,u))dv=N^{-1/2}
\int_{s\bar\tau N+u}^{t\bar\tau N+u}\xi(v,(\om,0))dv
\]
where $0\leq u<\tau(\om)$, Since $S_N(s,t)=S_N(s,t,\om)=\hat S_N(s,t,(\om,0))$, we have
\begin{equation}\label{6.3}
|S_N(s,t,\om)-\hat S_N(s,t,(\om,u))|\leq 2N^{-1/2}\max_{sN\leq k< tN}\int_{k\bar\tau}^{(k+1)\bar\tau+\hat L} |\xi(v,\om)|dv.
\end{equation}
Since $\xi$ is a stationary process on the probability space $(\hat\Om,\hat\cF,\hat P)$ it follows that
\[
\int|\hat S_N(s,t,(\om,u))|^{4M}d\hat P(\om,u)=\int|\hat S_N(0,t-s,(\om,u))|^{4M}d\hat P(\om,u).
\]
This together with (\ref{2.17}) and (\ref{6.3}) yields that
\begin{eqnarray}\label{6.4}
&E|S_N(s,t)|^{4M}\leq 2^{4M-1}\hat L(\int|\hat S_N(0,t-s,(\om,u))|^{4M}d\hat P(\om,u)\\
&+2^{4M}\hat LN^{-M}E\max_{sN\leq k< tN}(\int_{k\bar\tau}^{(k+1)\bar\tau+\hat L}|\xi(v)|dv)^{4M})
\nonumber\\
&\leq 2^{8M-2}\hat L^2(E|S_N(t-s)|^{4M}\nonumber\\
&+N^{-2M}(2^{4M}\hat L+2)E\max_{sN\leq k< tN}(\int_{k\bar\tau}^{(k+1)\bar\tau+\hat L}|\xi(v)|dv)^{4M}).\nonumber
\end{eqnarray}

Set
\[
\sig(s)=\sig(s,\om)=\sum_{l=0}^{n(s)-1}\tau\circ\vt^l(\om)
\]
and observe that
\begin{eqnarray*}
&S_N(t)=N^{-1/2}\int_0^{tN\bar\tau}\xi(u)du=N^{-1/2}\sum_{0\leq k<[n(tN\bar\tau]} \eta(k)\\
&+N^{-1/2}\int_{\sig(tN\bar\tau)}^{tN\bar\tau}\xi(u)du=U_N(t)+N^{-1/2}\int_{\sig(tN\bar\tau)}^{tN\bar\tau}\xi(u)du.
\end{eqnarray*}
Hence,
\begin{equation}\label{6.5}
|S_N(t)-U_N(t)|\leq N^{-1/2}\max_{0\leq k< tN}\int_{k\bar\tau}^{(k+1)\bar\tau+\hat L}|\xi(v)|dv,
\end{equation}
and so by (\ref{6.4}),
\begin{eqnarray}\label{6.6}
&E|S_N(s,t)|^{4M}\leq 2^{12M-3}\hat L^2(E|U_N(t-s)|^{4M}\\
&+2N^{-2M}(\hat L+2)E\max_{sN\leq k< tN}(\int_{k\bar\tau}^{(k+1)\bar\tau+\hat L}|\xi(v)|dv)^{4M}).\nonumber
\end{eqnarray}

In order to estimate the right hand side of (\ref{6.6}) we observe that by (\ref{2.17}),
\[
\hat L^{-2}tN\leq n(t\bar\tau N)\leq\hat L^2tN,
\]
and so
\begin{eqnarray}\label{6.7}
&\quad\,\,\,|U_N(t)-Z_N(t)|^{4M}\leq N^{-2M}|\sum_{\min([tN],n(t\bar\tau N))\leq k<\max([tN],n(t\bar\tau N))}\eta(k)|^{4M}\\
&\leq N^{-2M}\max_{0\leq m\leq Nt\hat L^2}\max_{0\leq k<|n(t\bar\tau N)-[tN]|}|\sum_{l=0}^k\eta(l+m)|^{4M}\nonumber\\
&\leq N^{-2M}\big(\sum_{0\leq m\leq Nt\hat L^2}\max_{0\leq k<(tN)^{\frac 12+\gam}}|\sum_{l=0}^k\eta(l+m)|^{4M}\nonumber\\
&+\sum_{0\leq m\leq Nt\hat L^2}\max_{(tN)^{\frac 12+\gam}\leq k<tN\hat L^2}|\sum_{l=0}^k\eta(l+m)|^{4M}
\bbI_{\{|n(t\bar\tau N)-[tN]|\geq(tN)^{\frac 12+\gam}\}}\big)\nonumber
\end{eqnarray}
where $\bbI_\Gam$ denotes the indicator of an event $\Gam$ and $\gam>0$ will be chosen later on. Applying Lemma \ref{lem3.2} we obtain
\[
E\max_{0\leq k<(tN)^{\frac 12+\gam}}|\sum_{l=0}^k\eta(l+m)|^{4M}\leq C^\eta(M)(tN)^{M(1+2\gam)}
\]
where $C^\eta(M)>0$ does not depend on $N$ and $t>0$. Taking into account that
\[
|n(t\bar\tau N)-[tN]|\leq|n(t\bar\tau N)-tN|+|tN-[tN]|\leq|n(t\bar\tau N)-tN|+(tN)^\gam
\]
when $1>\gam>0$, we derive from here together with (\ref{6.2}), the Cauchy--Schwarz and Chebyshev inequalities that
\begin{eqnarray*}
&E\max_{0\leq k<tN\hat L^2}|\sum_{l=0}^k\eta(l+m)|^{4M}\bbI_{\{|n(t\bar\tau N)-[tN]|\geq(tN)^{\frac 12+\gam}\}}\\
&\leq(E\max_{0\leq k<tN\hat L^2}|\sum_{l=0}^k\eta(l+m)|^{4M})^{1/2}(P\{|n(t\bar\tau N)-[tN]|\geq(tN)^{\frac 12+\gam}\})^{1/2}\\
&\leq\tilde C^\eta(M)(tN)^{-2M\gam}(E|n(t\bar\tau N)-[tN]|^{4M})^{1/2}\leq \tilde C^\eta(M)2^{2M}(\sqrt {K(M)}(tN)^{M(1-2\gam)}+1)
\end{eqnarray*}
for some $\tilde C^\eta(M)>0$ which does not depend on $N$ and $t>0$. It follows that
\begin{eqnarray}\label{6.8}
&E|U_N(t)-Z_N(t)|^{4M}\leq C_{UZ}(M)(t^{M(1+2\gam)+1}N^{-M(1-2\gam)+1}\\
&+t^{M(1-2\gam)+1}N^{-M(1+2\gam)+1}+N^{-2M})\nonumber
\end{eqnarray}
for some $C_{UZ}(M)>0$ which does not depend on $N$ and $t>0$.

Applying Lemma \ref{lem3.2} to sums of $\eta(k)$'s we obtain that for all $0\leq s<t<T$,
\begin{equation}\label{6.9}
E|Z_N(s,t)|^{4M}\leq C^\eta_Z(M)(t-s)^{2M}
\end{equation}
where $C^\eta_Z(M)>0$ does not depend on $N,s$ and $t$.
By the stationarity of the process $\xi$ on the probability space $(\hat\Om,\hat\cF,\hat P)$ for any $s\geq 0$,
\begin{eqnarray}\label{6.10}
&E\int_0^\tau|\xi(u)|^{4M}du=\int_{\hat\Om}|\xi(0,(\om,u))|^{4M}d\hat P(\om,u)\\
&=\int_{\hat\Om}|\xi(s,(\om,u)|^{4M}d\hat P(\om,u)\geq\int_s^{s+\hat L^{-1}}E|\xi(v)|^{4M}dv\nonumber
\end{eqnarray}
where the last inequality follows from (\ref{2.17}). Hence,
\begin{eqnarray}\label{6.11}
&\int_{k\bar\tau}^{(k+1)\bar\tau+\hat L}E|\xi(v)|^{4M}dv\\
&\leq\sum_{0\leq l\leq\hat L(\bar\tau+\hat L)}\int_{k\bar\tau+l\hat L^{-1}}^{k\bar\tau+(l+1)\hat L^{-1}}E|\xi(v)|^{4M}dv\nonumber\\
&\leq (\hat L(\bar\tau+\hat L)+1)E\int_0^\tau|\xi(v)|^{4M}dv<\infty\nonumber
\end{eqnarray}


Now assume that $t-s>1/N$ and take $\gam>0$ in (\ref{6.8}) so small that $M(1-2\gam)>1$. Then combining (\ref{6.6})--(\ref{6.11}) we conclude that
\begin{equation}\label{6.12}
E|S_N(s,t)|^{4M}\leq C_{S}(M)(t-s)^{2M}
\end{equation}
provided $t-s>1/N$ for some $C_S(M)>0$ which does not depend on $N$. For $|t-s|\leq N^{-1}$ we will use (\ref{6.11}) and the Jensen inequality
to obtain
\begin{eqnarray}\label{6.13}
&\quad E|S_N(s,t)|^{4M}\leq N^{-2M}E(\int_{s\bar\tau N}^{t\bar\tau N}|\xi(u)|du)^{4M}\leq N^{2M-1}(t-s)^{4M-1}\bar\tau^{4M-1}\\
&\times\int_{s\bar\tau N}^{t\bar\tau N}E|\xi(u)|^{4M}du\leq(t-s)^{2M}\bar\tau^{4M-1}\int_{s\bar\tau N}^{t\bar\tau N}E|\xi(u)|^{4M}du\leq\tilde C_S(M)(t-s)^{2M},
\nonumber\end{eqnarray}
where $\tilde C_S(M)>0$ does not depend on $N,s$ and $t$,
which together with (\ref{6.10}) and (\ref{6.11}) yields that (\ref{6.12}) holds true for all $t>s\geq 0$.

We will need also the supremum norm estimate which in view of (\ref{6.5}) and (\ref{6.11}) has the form
\begin{equation}\label{6.14}
E\sup_{0\leq t\leq T}|S_N(t)-Z_N(t)|^{4M}\leq\tilde C_{SZ}(M)(N^{-2M}+E\sup_{0\leq t\leq T}|U_N(t)-Z_N(t)|^{4M})
\end{equation}
where $C_{SZ}(M)>0$ does not depend on $N$. In order to estimate the last term in the right hand side of (\ref{6.14})
 we proceed similarly to (\ref{6.7}) with a slight modification to obtain a uniform in $t$ estimate. Namely, we write
 \begin{eqnarray*}
 &\sup_{0\leq t\leq T}|U_N(t)-Z_N(t)|^{4M}\\
 &\leq N^{-2M}\max_{0\leq m\leq NT\hat L^2}\max_{0\leq l<\sup_{0\leq t\leq T}
 |n(t\bar\tau N)-[tN]|}|\sum_{l=0}^k\eta(l+m)|^{4M}\\
 &\leq N^{-2M}\big(\sum_{0\leq m\leq NT\hat L^2}\max_{0\leq k<(TN)^{\frac 12+\gam}}|\sum_{l=0}^k\eta(l+m)|^{4M}\\
 &+(\sum_{0\leq m\leq NT\hat L^2}\max_{(TN)^{\frac 12+\gam}\leq k<NT\hat L^2}|\sum_{l=0}^k\eta(l+m)|^{4M})\\
 &\times\bbI_{\{\sup_{0\leq t\leq T}|n(t\bar\tau N)-[tN]|\geq(tN)^{\frac 12+\gam}\}}\big).
 \end{eqnarray*}
 Hence, similarly to the proof of (\ref{6.8}),
 \begin{equation}\label{6.15}
E\sup_{0\leq t\leq T}|S_N(t)-Z_N(t)|^{4M}\leq C^\eta_{SZ}(M)(N^{-M(1-2\gam)+1}+N^{-2M\gam+1}+N^{-2M})
\end{equation}
where $C^\eta_{SZ}(M)>0$ does not depend on $N$ and choosing $M>2$ we can achieve that both $M(1-2\gam)-1\geq\del M$ and
 $2M\gam-1\geq\del M$ for some $\del>0$.

Next, we estimate $E\| S_N-Z_N\|^{4M}_{p,[0,T]}$. Let $0=t_0<t_1<...<t_m=T$ and set
\[
\cI_i=|S_N(t_{i+1})-Z_N(t_{i+1})-S_N(t_i)+Z(t_i)|,\, i=0,1,...,m-1.
\]
Then
\begin{equation}\label{6.16}
\sum_{0\leq i<m}\cI_i^p\leq I_1+2^{2p-1}(I_2+I_3)
\end{equation}
where
\begin{eqnarray*}
&I_1=\sum_{i:\, t_{i+1}-t_i>N^{-(1-\al)}}\cI_i^p,\,\, I_2=\sum_{i:\, 0< t_{i+1}-t_i\leq N^{-(1-\al)}}|S_N(t_{i+1})-
S_N(t_i)|^p,\\
& I_3=\sum_{i:\, 0< t_{i+1}-t_i\leq N^{-(1-\al)}}|Z_N(t_{i+1})-Z_N(t_i)|^p
\end{eqnarray*}
and $\al\in(0,1)$ will be chosen later on. Observe that the sum in $I_1$ contains no more than $TN^{1-\al}$ terms
which together with (\ref{6.15}) yields that
\begin{eqnarray}\label{6.17}
&E\sup_{0=t_0<t_1<...<t_m=T}I_1^{4M/p}\\
&\leq(TN^{1-\al})^{\frac {4M}p-1}E\sup_{0=t_0<t_1<...<t_m=T}\max_{i:\, t_{i+1}-t_i>N^{-(1-\al)}}\cI_i^{4M}\nonumber\\
&\leq(TN^{1-\al})^{\frac {4M}p}2^{4M}\sup_{0\leq t\leq T}|S_N(t)-Z_N(t)|^{4M}\nonumber\\
&\leq C_I^{(1)}(M)(N^{-2M(\frac 12-\frac 2p(1-\al)-\gam)+1}+N^{-2M(\gam-\frac 2p(1-\al))+1})\nonumber
\end{eqnarray}
for some $C_I^{(1)}(M)>0$ which does not depend on $N$. We choose $\gam<\frac 14$, $\al>1-\frac {\gam p}2$ and
$M>\max((\gam-\frac 2p(1-\al))^{-1},\,(\frac 12-\frac 2p(1-\al)-\gam)^{-1})$ which bounds the right hand side
of (\ref{6.17}) by $C_I^{(1)}(M)N^{-\del M}$ for some $\del>0$ which does not depend on $N$.

In order to estimate $I_2$ we observe that $\sum_{0\leq i<m}(t_{i+1}-t_i)=T$, and so similarly to (\ref{4.6}),
\begin{eqnarray*}
&I_2\leq\sum_{i:\, 0< t_{i+1}-t_i\leq N^{-(1-\al)}}(t_{i+1}-t_i)\sup_{s,t:\, 0<t-s\leq N^{-(1-\al)}}\\
&((t-s)^{-1}|S_N(t)-S_N(s)|^p)\leq TN^{-(1-\al)(\frac p2-\be-1)}\sup_{s,t:\, 0<t-s\leq N^{-(1-\al)}}
\frac {|S_N(s,t)|^p}{(t-s)^{\frac p2-\be}}
\end{eqnarray*}
where $\be>0$ is chosen so small that $\frac p2-\be-1>0$. Using Theorem \ref{hoelder} we estimate expectation of the
$4M/p$-power of the last supremum to conclude that
\begin{equation}\label{6.18}
E\sup_{0=t_0<t_1<...<t_m=T}I_2^{4M/p}\leq C_I^{(2)}N^{-\del M}
\end{equation}
for some $\del,C_I^{(2)}(M)>0$ which do not depend on $N$.

Now observe that if $t-s<N^{-1}$ then $Z_N(s,t)=0$, and so we can assume in the sum for $I_3$ that $N^{-1}\leq t_{i+1}-t_i
\leq N^{-(1-\al)}$ for all $i$. So let $0=t_0<t_1<...<t_m=T$ and since  $\sum_{0\leq i<m}(t_{i+1}-t_i)=T$, we have using (\ref{6.9})
and Proposition \ref{hoelder2} (similarly to (\ref{4.5}) and (\ref{4.6})) that,
\begin{eqnarray}\label{6.19}
&E\sup_{0=t_0<t_1<...<t_m=T}I_3^{4M/p}\\
&\leq T^{4M/p}N^{-2M(1-\al)(p(1-\be)-2)}E\max_{k,l:\, 0\leq k<l\leq k+N^\al,\, l<TN}\frac
{|Z_N(kN^{-1},lN^{-1})|^{4M}}{(|l-k|/N)^{2M(1-\be)}}\nonumber\\
&\leq C^{(4)}_{I,\be}N^{-2M(1-\al)(p(1-\be)-2)}\leq C^{(4)}_{I,\be}N^{-\del M}\nonumber
\end{eqnarray}
for some $\del,C_{I,\be}^{(4)}(M)>0$ which do not depend on $N$ provided we choose $\be>0$ so small that $p(1-\be)>2$. Finally, combining
(\ref{6.1}) and (\ref{6.16})--(\ref{6.20}) we obtain Theorem \ref{thm2.3} for $\nu=1$.

\subsection{Estimates for iterated sums and integrals}\label{subsec6.3}

We will proceed somewhat similarly to Section \ref{subsec5.3} and \ref{subsec5.4} above taking into account that now we have two
discrete time approximations $\bbZ_N$ and $\bbU_N$ with the latter being the iterated sum with a random number of terms. First, we write
\begin{eqnarray}\label{6.20}
&\bbS_N^{ij}(t)=\frac 1N\int_0^{tN\bar\tau}\xi_j(u)du\int_0^u\xi_i(v)dv
=\frac 1N\int_0^{\sig(tN\bar\tau)}\xi_j(u)du\int_0^{\sig(u)}\xi_i(v)dv\\
&+\frac 1N\int_0^{\sig(tN\bar\tau)}\xi_j(u)du\int_{\sig(u)}^u\xi_i(v)dv
+\int_{\sig(tN\bar\tau)}^{tN\bar\tau}\xi_j(u)du\int_0^{u}\xi_i(v)dv\nonumber\\
&=\bbU_N(t)+\frac 1N\sum_{k=0}^{n(tN\bar\tau)-1}F_{ij}\circ\vt^k\nonumber\\
&+\frac 1N\int_{\sig(tN\bar\tau)}^{tN\bar\tau}\xi_j(u)du\int_0^u\xi_i(v)dv\nonumber
\end{eqnarray}
where $\sig$ was defined in Section \ref{subsec6.2}. Hence,
\begin{eqnarray*}
&|\bbS_N^{ij}(t)-tEF_{ij}-\bbU_N(t)|\leq |\frac 1N\max_{0\leq k<\hat LtN\bar\tau}|\sum_{l=0}^{k}(F_{ij}\circ\vt^k-EF_{ij})|\\
&+\frac 1N|n(tN\bar\tau)-tN||EF_{ij}|+\frac 1N\int_{\sig(tN\bar\tau)}^{tN\bar\tau}|\xi_j(u)|du\max_{0\leq u\leq tN\bar\tau }|\int_0^u\xi_i(v)dv|.\nonumber
\end{eqnarray*}
Taking into account that
\[
\max_{0\leq s\leq tN\bar\tau}|\int_0^s\xi_i(v)dv|\leq\max_{0\leq k\leq tN\bar\tau\hat L}|\sum_{l=0}^{k-1}\eta_i(l)|+\sup_{0\leq s\leq tN\bar\tau}
\int_{\sig(sN\bar\tau)}^{sN\bar\tau}|\xi_i(v)|dv
\]
and relying on (\ref{6.2}), (\ref{6.10}), (\ref{6.11}), (\ref{6.15}), Lemma \ref{lem3.2} and the Cauchy--Schwarz inequality we obtain similarly to (5.9) that
\begin{eqnarray}\label{6.21}
&E\sup_{0\leq s\leq t}|\bbS^{ij}_N(s)-sEF_{ij}-\bbU_N^{ij}(s)|^{2M}\\
&\leq 3^{2M}N^{-2M}\big(E\max_{0\leq k<\hat LtN\bar\tau}|\sum_{l=0}^{k}(F_{ij}\circ\vt^k-EF_{ij})|^{2M}\nonumber\\
&+E\sup_{0\leq s\leq t}|n(sN\bar\tau)-sN|^{2M}|EF_{ij}|^{2M}\nonumber\\
&+(E\sup_{0\leq s\leq t}(\int_{\sig(sN\bar\tau)}^{sN\bar\tau}|\xi_j(u)|du)^{4M})^{1/2}\nonumber\\
&\times(E(\sup_{0\leq u\leq tN\bar\tau }|\int_0^u\xi_i(v)dv|^{4M})^{1/2}\nonumber\\
&\leq C_{\bbS F\bbZ}(M)(t^MN^{-M}+N^{-M+\frac 12}t^{M+\frac 12}+N^{-2M+1}t)\nonumber
\end{eqnarray}
where $C_{\bbS F\bbZ}>0$ does not depend on $N$ and $t$.

Introduce
\[
\hat\bbS_N^{ij}(s,t)(\om,u)=N^{-1}\int_{s\bar\tau N}^{t\bar\tau N}\xi(v,(\om,u))\int_{s\bar\tau N}^v\xi(w,(\om,u))dwdv
\]
and observe that
\begin{eqnarray}\label{6.22}
&|\bbS_N^{ij}(s,t)(\om)-\hat\bbS_N^{ij}(s,t)(\om,u)|\\
&\leq N^{-1/2}\big(|S_N^i(s,t)|\max_{0\leq m<TN}\int_{m\bar\tau}^{(m+1)\bar\tau+\hat L}|\xi_j(u)|du\nonumber\\
&+|S_N^j(s,t)|\max_{0\leq m<TN}\int_{m\bar\tau}^{(m+1)\bar\tau+\hat L}|\xi_i(v)|dv\big)\nonumber\\
&+4N^{-1}\max_{0\leq k,l<TN}\int_{k\bar\tau}^{(k+1)\bar\tau+\hat L}|\xi_i(v)|dv\int_{l\bar\tau}^{(l+1)\bar\tau+\hat L}|\xi_j(u)|du.
\nonumber\end{eqnarray}
Again, using the stationarity of the process $\xi$ on the probability space $(\hat\Om,\hat\cF,\hat P)$ we see that
\[
\int|\hat\bbS_N^{ij}(s,t)(\om,u)|^{2M}d\hat P(\om,u)=\int|\hat\bbS_N^{ij}(0,t-s)(\om,u)|^{2M}d\hat P(\om,u).
\]
Hence, by (\ref{6.22}) and the Cauchy--Schwarz inequality,
\begin{eqnarray}\label{6.23}
&E|\bbS_N^{ij}(s,t)-(t-s)EF_{ij}|^{2M}\leq 2^{4M}\big( E|\bbS_N^{ij}(0,t-s)-(t-s)EF_{ij}|^{2M}\\
&+N^{-M}\big((E|S^i_N(s,t)|^{4M})^{1/2}(E\max_{0\leq m<TN}(\int_{m\bar\tau}^{(m+1)\bar\tau}|\xi_j(u)|du)^{4M})^{1/2}
\nonumber\\
&+(E|S^j_N(s,t)|^{4M})^{1/2}(E\max_{0\leq m<TN}(\int_{m\bar\tau}^{(m+1)\bar\tau}|\xi_i(v)|dv)^{4M})^{1/2}\big)\nonumber\\
&+2^{4M}N^{-2M}(E\max_{0\leq m<TN}(\int_{m\bar\tau}^{(m+1)\bar\tau}|\xi_i(v)|dv)^{4M})^{1/2}\nonumber\\
&\times(E\max_{0\leq m<TN}(\int_{m\bar\tau}^{(m+1)\bar\tau}|\xi_j(u)|du)^{4M})^{1/2}\big).\nonumber
\end{eqnarray}
Estimating the terms in the right hand side of (\ref{6.23}) by (\ref{6.10}), (\ref{6.11}) and (\ref{6.21}) we obtain
\begin{eqnarray}\label{6.24}
&\quad\quad E|\bbS_N^{ij}(s,t)-(t-s)EF_{ij}|^{2M}\leq\bbC_{\bbS F}(M)(E|\bbZ_N^{ij}(s,t)|^{2M}+N^{-M+\frac 12}(t-s)^M\\
&+N^{-2M+1})\leq\bbC_{\bbS F}(M)(\bbC_\bbZ(M)N^{-M}(t-s)^M+N^{-M+\frac 12}(t-s)^M+N^{-2M+1})\nonumber
\end{eqnarray}
where we take into account that by Lemma \ref{lem3.2} similarly to (\ref{5.14}),
\begin{equation}\label{6.25}
E|\bbZ_N^{ij}(s,t)|^{2M}\leq \bbC_\bbZ(M)N^{-M}(t-s)^M
\end{equation}
with both constants $\bbC_{\bbS F}(M),\bbC_\bbZ(M)>0$ not dependent of $N$ and $t\geq s$. We will use (\ref{6.24}) when
$t-s\geq 1/N$ while for $0<t-s<1/N$ we will use the simple estimate
\begin{equation*}
|\bbS_N^{ij}(s,t)|\leq N^{-1}(\int_{s\bar\tau N}^{t\bar\tau N}|\xi_i(u)|du)(\int_{s\bar\tau N}^{t\bar\tau N}|\xi_j(v)|dv)
\end{equation*}
which yields relying on (\ref{6.11}) that in this case
\begin{equation}\label{6.26}
E|\bbS^{ij}_N(s,t)|^{2M}\leq\bbC_\bbS(M)N^{2M}(t-s)^{4M}\leq\bbC_\bbS(M)(t-s)^{2M}.
\end{equation}
Combining (\ref{6.24}) and (\ref{6.26}) we obtain that for any $0\leq s\leq t$,
\begin{equation}\label{6.27}
E|\bbS^{ij}_N(s,t)|^{2M}\leq\tilde\bbC_\bbS(M)(t-s)^{2M-1}
\end{equation}
where $\tilde\bbC_\bbS(M)>0$ does not depend on $N$ and $t\geq s$.

Next, it is easy to see from the definitions of $\bbU_N^{ij}$ and $\bbZ_N^{ij}$ that
\begin{eqnarray}\label{6.28}
&|\bbU_N^{ij}(t)-\bbZ_N^{ij}(t)|\leq N^{-1}|\sum_{\min([tN],n(t\bar\tau N))\leq k<\max([tN],n(t\bar\tau N))}\eta_j(k)\\
&\times\sum_{l=0}^{k-1}\eta_i(l)|\nonumber\\
&\leq |Z_N^j(N^{-1}\min([tN],n(t\bar\tau N)),\,N^{-1}\max([tN],n(t\bar\tau N)))|\nonumber\\
&\times|Z^i_N(N^{-1}\min([tN],n(t\bar\tau N))|\nonumber\\
&+|\bbZ_N^{ij}(N^{-1}\min([tN],n(t\bar\tau N)),\,N^{-1}\max([tN],n(t\bar\tau N)))|
\nonumber\end{eqnarray}
and we will estimate below moments of the terms in the right hand side of (\ref{6.28}).

Observe that $tN\bar\tau-\sig(tN\bar\tau)\leq\hat L$, and so
\begin{eqnarray}\label{6.29}
&\sup_{0\leq t\leq T}(\int_{\sig(tN\bar\tau)}^{tN\bar\tau}|\xi_i(v)|dv)^{4M}\leq\hat L^{4M-1}\sup_{0\leq t\leq T}
\int_{\sig(tN\bar\tau)}^{tN\bar\tau}|\xi_i(v)|^{4M}dv\\
&\leq \hat L^{4M-1}\max_{0\leq k\leq TN}\int_{k\hat L}^{(k+2)\hat L}|\xi_i(v)|^{4M}dv\nonumber\\
&\leq\hat L^{4M-1}\sum_{0\leq k\leq TN}\int_{k\hat L}^{(k+2)\hat L}|\xi_i(v)|^{4M}dv.\nonumber
\end{eqnarray}
By the H\" older inequality,
\begin{eqnarray}\label{6.30}
&\big(E\int_{\sig(tN\bar\tau)}^{tN\bar\tau}\int_{\sig(tN\bar\tau)}^{tN\bar\tau}|\xi_i(v)\xi_j(u)|dudv\big)^{2M}\\
&\leq\hat L^{4M-2}E\big((\int_{\sig(tN\bar\tau)}^{tN\bar\tau}|\xi_i(v)|^{2M}dv)(\int_{\sig(tN\bar\tau)}^{tN\bar\tau}
|\xi_j(u)|^{2M}du)\big)\nonumber\\
&\leq\hat L^{4M}(E\max_{0\leq k<TN}\int_{k\hat L}^{(k+2)\hat L}|\xi_i(v)|^{4M}dv)^{1/2}\nonumber\\
&\times(E\max_{0\leq k<TN}\int_{k\hat L}^{(k+2)\hat L}|\xi_j(u)|^{4M}du)^{1/2}\nonumber\\
&\leq\hat L^{4M}(\sum_{0\leq k<TN}\int_{k\hat L}^{(k+2)\hat L}E|\xi_i(v)|^{4M}dv)^{1/2}\nonumber\\
&\times(\sum_{0\leq k<TN}\int_{k\hat L}^{(k+2)\hat L}E|\xi_j(u)|^{4M}du)^{1/2}\nonumber.
\end{eqnarray}
We have also
\begin{eqnarray}\label{6.31}
&E\sup_{0\leq t\leq T}(|S_N^i(t)|\int_{\sig(tN\bar\tau)}^{tN\bar\tau}|\xi_j(u)|du)^{2M}\\
&\leq (E\sup_{0\leq t\leq T}|S_N^i(t)|^{4M})^{1/2}(E\sup_{0\leq t\leq T}(\int_{\sig(tN\bar\tau)}^{tN\bar\tau}
|\xi_j(u)|du)^{4M})^{1/2}\leq\bbC_{S\xi}(M)N^{1/2}\nonumber
\end{eqnarray}
The last factor here is estimated as in (\ref{6.14}), (\ref{6.30}) and (\ref{6.31}) while for the first factor
we write
\begin{equation}\label{6.32}
E\sup_{0\leq t\leq T}|S^i_N(t)|^{4M}\leq 2^{4M-1}(E\sup_{0\leq t\leq T}|S^i_N(t)-Z_N^i(t)|^{4M}+
E\sup_{0\leq t\leq T}|Z^i_N(t)|^{4M}).
\end{equation}
By (\ref{3.3}) of Lemma \ref{lem3.2},
\begin{equation}\label{6.33}
E\sup_{s\leq u\leq t}|Z^i_N(s,u)|^{4M}\leq\tilde C_Z(M)(t-s)^{2M}
\end{equation}
where $\tilde C_Z(M)>0$ does not depend on $N$ and $i$. Similarly to (\ref{6.17}) we obtain also that
\begin{equation}\label{6.34}
E\sup_{0\leq t\leq T}|S^i_N(t)-Z_N^i(t)|^{4M}\leq\tilde C_{SZ}(M)N^{-\del M}
\end{equation}
where $\tilde C_{SZ}(M)>0$ and $\del>0$ do not depend on $N$ and $M$ is taken appropriately large.

Next, observe that by (\ref{3.4}) of Lemma \ref{lem3.2},
\begin{equation}\label{6.35}
E|\bbZ^{ij}_N(s,t)|^{2M}\leq\tilde C_{\bbZ}(M)(t-s)^{2M}
\end{equation}
for some $\tilde C_{\bbZ}(M)>0$ which does not depend on $N$.
Applying (\ref{3.4}) of Lemma \ref{lem3.2} together with (\ref{6.2}), (\ref{6.25}) and the Cauchy-Schwarz inequality we obtain that
\begin{eqnarray}\label{6.36}
&E\sup_{0\leq t\leq T}|\bbZ_N^{ij}(N^{-1}\min([tN],n(t\bar\tau N)),\,N^{-1}\max([tN],n(t\bar\tau N)))|^{2M}\\
&\leq\sum_{0\leq k\leq NT\hat L^2}E(\max_{0\leq l<(TN)^{\frac 12+\gam}}|\bbZ_N^{ij}(kN^{-1},(k+l)N^{-1})|^{2M})\nonumber\\
&+\sum_{0\leq k\leq NT\hat L^2}E(\max_{(TN)^{\frac 12+\gam}\leq l\leq NT\hat L^2}|\bbZ_N^{ij}(kN^{-1},(k+l)N^{-1})|^{2M})
\nonumber\\
&\times\bbI_{\{\sup_{0\leq t\leq T}|n(t\bar\tau N)-tN|\geq(TN)^{\frac 12+\gam}\}}\leq C_{T,\bbZ}(M)
N^{-\gam M+1}\nonumber
\end{eqnarray}
for some $ C_{T,\bbZ}(M)>0$ which do not depend on $N$. Employing similar estimates for other terms in the right hand side 
of (\ref{6.28}) and combining (\ref{6.21}) with (\ref{6.28})--(\ref{6.36}) we conclude that
\begin{equation}\label{6.37}
E\sup_{0\leq t\leq T}|\bbS_N^{ij}(t)-tEF_{ij}-\bbZ_N^{ij}(t)|^{2M}\leq C_{\bbS\bbZ}(M)N^{-\del M}
\end{equation}
for some $\del,\, C_{\bbS\bbZ}(M)>0$ which do not depend on $N$, provided we choose $\gam$ small enough
similarly to Section \ref{subsec6.1}.

Next, we estimate $E\|\bbS_N^{ij}-tEF_{ij}-\bbZ_N^{ij}\|^{2M}_{\frac p2,[0,T]}$. Let
$0=t_0<t_1<...<t_m=T$ and set
\[
\cJ_l=|\bbS_N^{ij}(t_l,t_{l+1})-(t_{l+1}-t_l)EF_{ij}-\bbZ_N^{ij}(t_l,t_{l+1})|.
\]
Then
\begin{equation}\label{6.38}
\sum_{0\leq l<m}\cJ_l^{-p/2}\leq J_1+2^{\frac p2-1}(J_2+J_3)
\end{equation}
where
\begin{eqnarray*}
&J_1=\sum_{l:\, t_{l+1}-t_l>N^{-(1-\al)}}\cJ_l^{p/2},\\
& J_2=\sum_{l:\, t_{l+1}-t_l\leq N^{-(1-\al)}}|\bbS_N^{ij}(t_l,t_{l+1})-(t_{l+1}-t_l)EF_{ij}|^{p/2}, \\
&\mbox{and}\,\,\, J_4=\sum_{l:\, 0<t_{l+1}-t_l\leq N^{-(1-\al)}}|\bbZ_N^{ij}(t_l,t_{l+1})|^{p/2}.
\end{eqnarray*}

Observe that
\[
\bbS_N^{ij}(s,t)=\bbS_N^{ij}(t)-\bbS_N^{ij}(s)-S_N^i(s)(S^j_N(t)-S^j_N(s))
\]
and
\[
\bbZ_N^{ij}(s,t)=\bbZ_N^{ij}(t)-\bbZ_N^{ij}(s)-Z_N^i(s)(Z^j_N(t)-Z^j_N(s)).
\]
This together with (\ref{6.32})--(\ref{6.34}) and (\ref{6.37}) yields  similarly to (\ref{5.12})) that
\begin{equation}\label{6.39}
E\sup_{0\leq t\leq T}|\bbS^{ij}_N(s,t)-E\bbS_N^{ij}(s,t)-Z_N^{ij}(s,t)|^{2M}\leq\tilde C_{\bbS\bbZ}(M)N^{-\del M}
\end{equation}
where $\del,\tilde C_{\bbS\bbZ}(M)>0$ do not depend on $N$.

Now we observe that the sum for $J_1$ contains no more than $TN^{1-\al}$ terms which together with (\ref{6.40}) gives that
\begin{equation}\label{6.40}
E\sup_{0=t_0<t_1<...<t_m=T}J_1^{4M/p}\leq\tilde C_{\bbS\bbZ}(M)T^{\frac {4M}p}N^{(\frac {4M}p-1)(1-\al)-\del M}.
\end{equation}
Choosing $\al$ so that $1>\al>\frac {4M-p-\del pM}{4M-p}$ we obtain that the right hand side of (\ref{6.40}) is bounded by
$C_J^{(1)}(M)N^{-\ve M}$ for some $\ve,C_J^{(1)}(M)>0$ which do not depend on $N$.

Next, taking into account that $\sum_{0\leq l<m}(t_{l+1}-t_l)=T$ and applying Theorem \ref{hoelder}, which is possible in view of (\ref{6.27}),
we obtain similarly to (\ref{4.6}) that
\begin{eqnarray}\label{6.41}
&E\sup_{0=t_0<t_1<...<t_m=T}J_2^{4M/p}\leq T^{4M/p}N^{-2M(1-\al)(1-\gam-\frac 2p)}\\
&\times\sup_{s,t:\, 0<t-s\leq N^{-(1-\al)},\,t\leq T}\big((t-s)^{-2M(1-\gam)}|\bbS_N^{ij}(s,t)-(t-s)EF_{ij}|^{2M}\big)\leq C_J^{(2)}N^{-\del M}\nonumber
\nonumber\end{eqnarray}
for some $\del, C_J^{(2)}(M)>0$ which do not depend on $N$ provided we choose $\al\in (0,1)$ and $\gam\in(0,1-\frac 2p)$.
 Finally, using (\ref{3.4}) of Lemma \ref{lem3.2}, taking into account that $\bbZ_N^{ij}(s,t)=0$ if $|t-s|<N^{-1}$ and using Proposition \ref{hoelder2},
 which is possible in view of (\ref{6.35}), we obtain that
\begin{eqnarray}\label{6.42}
&E\sup_{0=t_0<t_1<...<t_m=T}J_3^{4M/p}\leq T^{4M/p}N^{-2M(1-\al)(1-\gam-\frac 2p)}\\
&\times\sup_{s,t:\, 0<t-s\leq N^{-(1-\al)},\,t\leq T}\big((t-s)^{-2M(1-\gam)}|\bbZ_N^{ij}(s,t)|^{2M}\big)\leq C_J^{(3)}N^{-\del M}\nonumber
\end{eqnarray}
for some $C^{(3)}_J(M)>0$ which does not depend on $N$ where, again, we choose $\al\in (0,1)$ and $\gam\in(0,1-\frac 2p)$.
Now, combining (\ref{6.1}), (\ref{6.38}) and (\ref{6.40})--(\ref{6.42}) we derive Theorem \ref{thm2.3} for $\nu=2$ while its
extension for $\nu>2$ will follow from the arguments of Section \ref{sec7}.

\section{Multiple iterated sums and integrals}\label{sec7}\setcounter{equation}{0}
\subsection{Multiplicativity or high ranks Chen's identities}\label{subsec7.1}
By the recurrence definition (\ref{rec1}),
\begin{eqnarray*}
&\bbW_N(s,t)=\int_s^tW_N(s,v)\otimes dW_N(v)+(t-s)\Gam=\int_s^uW_N(s,v)\otimes dW_N(v)\\
&+(u-s)\Gam+\int_u^tW_N(u,v)\otimes dW_N(v)+(t-u)\Gam +W_N(s,u)\otimes W_N(u,t)\\
&=\bbW_N(s,u)+W_N(s,u)\otimes W_N(u,t)+\bbW_N(u,t).
\end{eqnarray*}
Next, we proceed by induction. Suppose that
\begin{equation}\label{7.1}
\bbW_N^{(m)}(s,t)=\sum_{k=0}^m\bbW_N^{(k)}(s,u)\otimes\bbW_N^{(m-k)}(u,t)
\end{equation}
for all $m=2,...,n-1$, where $\bbW^{(0)}_N(v,w)=1$. Then by the definition (\ref{rec2}) and the induction hypothesis
\begin{eqnarray*}
&\bbW_N^{(n)}(s,t)=\int_s^u\bbW_N^{(n-1)}(s,v)\otimes dW_N(v)+\int_s^u\bbW_N^{(n-2)}(s,v)\otimes\Gam dv\\
&+\int_u^t\bbW_N^{(n-1)}(u,v)\otimes dW_N(v)+\int_u^t\bbW_N^{(n-2)}(u,v)\otimes\Gam dv+\int_u^t(\bbW_N^{(n-1)}(s,v)\\
&-\bbW_N^{(n-1)}(u,v))\otimes dW_N(v)+\int_u^t(\bbW_N^{(n-2)}(s,v)-\bbW_N^{(n-2)}(u,v))\otimes\Gam dv\\
&=\bbW_N^{(n)}(s,u)+\bbW_N^{(n)}(u,t)+\sum_{k=1}^{n-1}\bbW_N^{(k)}(s,u)\otimes\int_u^t\bbW_N^{(n-1-k)}(u,v)\otimes dW_N(v)\\
&+\sum_{k=1}^{n-2}\bbW_N^{(k)}(s,u)\otimes\int_u^t\bbW_N^{(n-2-k)}(u,v)\otimes\Gam dv=\bbW_N^{(n)}(s,u)\\
&+\bbW_N^{(n)}(u,t)+\bbW_N^{(n-1)}(s,u)\otimes W_N(u,t)+\sum_{k=1}^{n-2}\bbW_N^{(k)}(s,u)\otimes\bbW_N^{(n-k)}(u,t)
\end{eqnarray*}
proving (\ref{7.1}) for $m=n$.

Concerning the iterated sums and integrals $\bbS_N^{(n)}(s,t)$ we obtain the multiplicativity identity
\begin{equation}\label{7.2}
\bbS_N^{(m)}(s,t)=\sum_{k=0}^m\bbS_N^{(k)}(s,u)\otimes\bbS_N^{(m-k)}(u,t),\,\, s\leq u\leq t,
\end{equation}
where $\bbS_N^{(0)}\equiv 1$, proceeding by induction as above relying on the recurrence formulas
\[
\bbS_N^{(n)}(s,t)=N^{-1/2}\sum_{[sN]\leq k< [tN]}\bbS_N^{(n-1)}(s,k/N)\otimes\xi(k)
\]
in the discrete time case and
\[
\bbS_N^{(n)}(s,t)=N^{-1/2}\int_{sN}^{tN}\bbS_N^{(n-1)}(s,v/N)\otimes\xi(v)dv
\]
in the continuous time case. In these cases (\ref{7.2}) follows also from Theorem 2.1.2 in \cite{Lyo}.

\subsection{Higher rank estimates}\label{subsec7.2}
We will start with the following assertion which is a slight extenson of second halves of Theorem 2.2.1 from \cite{Lyo}
and Theorem 3.1.2 from \cite{LQ} which dealt with continuous rough paths while we do not assume any continuity here
specifying instead assumptions really needed for the proof.
\begin{proposition}\label{prop7.1}
For $1\leq\ell<\infty$ let $\bbX^{(\nu)}=\bbX^{(\nu)}(s,t),\, 0\leq s\leq t\leq T,\,\nu=1,2,...,\ell$ be two parameter
stochastic processes which form a multiplicative functional in the sense of \cite{Lyo} and \cite{LQ}, i.e. the Chen
relations (\ref{ho1}) hold true and $X^{(\nu)}(s,s)=0$. Let $\phi(s,t),\, 0\leq s\leq t\leq T$ be superadditive two argument function, i.e.
\[
\phi(s,t)\geq\phi(s,u)+\phi(u,t)\quad\mbox{for any}\quad u\in[s,t]
\]
with $\phi(s,s)=0$ for all $s$. Let $p\geq 1$ and assume that
\begin{equation}\label{7.3}
\|\bbX^{(\nu)}(s,t)\|\leq\frac {(\phi(s,t))^{\nu/p}}{\be(\nu/p)!}\,\,\,\mbox{for all}\,\,\,\nu=1,...,[p]\,\,\,\mbox{and}\,\,\, 0\leq s\leq t\leq T
\end{equation}
where $\|\cdot\|$ is a corresponding tensor norm (see Section 3.1 in \cite{LQ}), $\be\geq 2p^2(1+\sum^\infty_{r=3}(\frac 2{r-2})^{([p]+1)/p})$ and $a!$ is the
$\Gam$-function at $a+1$. Suppose that for any sequence of partitions $\cD_n=\{ 0=t_0^{(n)}<t_1^{(n)}<...<t_{m_n}^{(n)}=T\}$ such that
diam$\cD_n=\max_{0\leq j\leq m_n}|t_{j+1}^{(n)}-t_j^{(n)}|\to 0$ we have
\begin{equation}\label{7.4}
\sum_{0\leq j<m_n}\|\bbX^{(\nu)}(t_j^{(n)},t_{j+1}^{(n)})\|\to 0\quad\mbox{for}\quad\nu=[p]+1,...,\ell.
\end{equation}
Then (\ref{7.3}) remains true for $\nu=[p]+1,...,\ell$.
\end{proposition}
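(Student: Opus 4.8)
The plan is to establish (\ref{7.3}) for the remaining orders $\nu=[p]+1,\dots,\ell$ by induction on $\nu$, the cases $1\le\nu\le[p]$ being assumed. Fix $\nu>[p]$ and suppose (\ref{7.3}) already holds for all orders $1,\dots,\nu-1$. For a partition $\cD=\{s=t_0<t_1<\cdots<t_n=t\}$ of $[s,t]$ set $S_\cD=\sum_{0\le i<n}\bbX^{(\nu)}(t_i,t_{i+1})$. For the trivial partition this is exactly $\bbX^{(\nu)}(s,t)$, while the negligibility hypothesis (\ref{7.4}) says precisely that $\|S_\cD\|\to0$ as $\mathrm{diam}\,\cD\to0$. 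The idea is therefore to telescope $\bbX^{(\nu)}(s,t)=S_{\{s,t\}}$ back to a fine partition by removing one interior point at a time and to control each removal increment by the lower-order bounds already available.

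When the point $t_j$ is deleted, the two adjacent increments merge and the Chen relation (\ref{ho1}) for the pair $[t_{j-1},t_j]$, $[t_j,t_{j+1}]$ gives
\begin{equation*}
\bbX^{(\nu)}(t_{j-1},t_{j+1})-\bbX^{(\nu)}(t_{j-1},t_j)-\bbX^{(\nu)}(t_j,t_{j+1})=\sum_{k=1}^{\nu-1}\bbX^{(k)}(t_{j-1},t_j)\otimes\bbX^{(\nu-k)}(t_j,t_{j+1}),
\end{equation*}
so the left side is the increment of $S_\cD$ under this removal. Every factor on the right has order strictly less than $\nu$, so the inductive hypothesis and $\|a\otimes b\|\le\|a\|\,\|b\|$ bound its norm by $\be^{-2}\sum_{k=1}^{\nu-1}\phi(t_{j-1},t_j)^{k/p}\phi(t_j,t_{j+1})^{(\nu-k)/p}/\big((k/p)!\,((\nu-k)/p)!\big)$. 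The neo-classical inequality of \cite{Lyo}, \cite{LQ}, namely
\begin{equation*}
\sum_{k=0}^{\nu}\frac{x^{k/p}y^{(\nu-k)/p}}{(k/p)!\,((\nu-k)/p)!}\le p^2\,\frac{(x+y)^{\nu/p}}{(\nu/p)!}\qquad(x,y\ge0),
\end{equation*}
together with superadditivity $\phi(t_{j-1},t_j)+\phi(t_j,t_{j+1})\le\phi(t_{j-1},t_{j+1})$, bounds the increment by $p^2\be^{-2}\phi(t_{j-1},t_{j+1})^{\nu/p}/(\nu/p)!$. Finally, if the current partition has $q$ interior points, splitting the overlapping pairs $[t_{j-1},t_{j+1}]$ into two families of superadditively disjoint intervals yields $\sum_j\phi(t_{j-1},t_{j+1})\le2\phi(s,t)$, so at each stage one may remove a point with $\phi(t_{j-1},t_{j+1})\le2\phi(s,t)/q$.

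Telescoping from $\cD$ down to $\{s,t\}$, with interior-point counts running through $q=n-1,\dots,2,1$ and the last step ($q=1$) using $\phi(t_0,t_2)=\phi(s,t)$, the per-step bounds sum to
\begin{equation*}
\|\bbX^{(\nu)}(s,t)\|\le\|S_\cD\|+\frac{p^2}{\be^{2}}\,\frac{\phi(s,t)^{\nu/p}}{(\nu/p)!}\Big(1+\sum_{q\ge2}(2/q)^{\nu/p}\Big).
\end{equation*}
Since $\nu>p$ gives $\nu/p>1$ the series converges, and as $2/q\le1$ it is dominated by the same expression with exponent $([p]+1)/p$, which is exactly the sum occurring in the hypothesis on $\be$; the assumed $\be\ge2p^2\big(1+\sum_{r\ge3}(2/(r-2))^{([p]+1)/p}\big)$ is calibrated so that $p^2\be^{-2}(1+\sum_{q\ge2}(2/q)^{([p]+1)/p})\le\be^{-1}$. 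Taking a sequence of partitions with $\mathrm{diam}\,\cD\to0$ and invoking (\ref{7.4}) to send $\|S_\cD\|\to0$ then yields $\|\bbX^{(\nu)}(s,t)\|\le\phi(s,t)^{\nu/p}/(\be\,(\nu/p)!)$, closing the induction. I expect the main obstacle to be purely bookkeeping: pinning down the neo-classical constant and verifying that the resulting convergent series is absorbed by the prescribed threshold for $\be$; the Chen-relation computation of the removal increment and the superadditive pigeonhole choice of the deleted point are routine.
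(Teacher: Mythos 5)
Your proof is correct and takes essentially the same route as the paper's: induction on the level $\nu$, with the Chen relation identifying the change of $\sum_i\bbX^{(\nu)}(t_i,t_{i+1})$ under removal of a single partition point, the superadditive pigeonhole selection of that point, Lyons' neo-classical binomial inequality, and finally assumption (\ref{7.4}) to make the fine-partition sum vanish. The only immaterial differences are that you telescope the increment sum $S_\cD$ directly rather than the cross-term functional $X_\cD^{(\nu)}$ extracted from the identity (\ref{7.5}), use the marginally sharper pigeonhole bound $2\phi(s,t)/q$ with $q$ interior points in place of the paper's $2\phi(s,t)/(m-2)$, and carry out explicitly the constant bookkeeping that the paper delegates to \cite{Lyo} and \cite{LQ}.
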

\begin{proof}
For any $0\leq s<t\leq T$ and any partition $\cD=\{ s=t_0<t_1<...<t_m=t\}$ we can write by (\ref{ho1}) that
\[
\bbX^{(\nu)}(s,t_{i+1})=\bbX^{(\nu)}(s,t_i)+\bbX^{(\nu)}(t_i,t_{i+1})+\sum_{k=1}^{\nu-1}\bbX^{(k)}(s,t_i)\otimes\bbX^{(\nu-k)}(t_i,t_{i+1}).
\]
Summing this in $i=0,1,...,m-1$ we obtain
\begin{equation}\label{7.5}
\bbX^{(\nu)}(s,t)=\sum_{i=0}^{m-1}\bbX^{(\nu)}(t_i,t_{i+1})+\sum_{i=1}^{m-1}\sum_{k=1}^{\nu-1}\bbX^{(k)}(s,t_i)\otimes\bbX^{(\nu-k)}(t_i,t_{i+1}).
\end{equation}
We will prove (\ref{7.3}) for $\nu=[p]+1,...,\ell$ by induction. For $\nu\leq[p]$ this is given by our assumption. Suppose that (\ref{7.3}) holds
true for $\nu=1,...,n-1$ with $\ell\geq n>[p]$ and prove it for $\nu=n$. Set
\[
X_\cD^{(n)}(s,t)=\sum_{k=1}^{n-1}\sum_{i=1}^{m-1}\bbX^{(k)}(s,t_i)\otimes\bbX^{(n-k)}(t_i,t_{i+1}).
\]
In order to estimate $\| X_\cD^{(n)}(s,t)\|$ we apply the coarsing of partitions procedure from Theorem 2.2.1 in \cite{Lyo} and
Theorem 3.1.2 in \cite{LQ}. Namely, on the first step we choose a point $t_l$ in the partition $\cD$ such that when $m>2$,
\[
\phi(t_{l-1},t_{l+1})\leq\frac {2\phi(s,t)}{m-2}
\]
which exists by a simple superadditivity argument (see Lemma 2.2.1 in \cite{Lyo}). If $m=2$ we do nothing. Considering the new partition
$\cD'=\cD\setminus\{t_l\}$ we see as in \cite{Lyo} and \cite{LQ} that
\[
X_\cD^{(n)}(s,t)-X_{\cD'}^{(n)}(s,t)=\sum_{k=1}^{n-1}\bbX^{(k)}(t_{l-1},t_l)\otimes\bbX^{(n-k)}(t_l,t_{l+1}).
\]
Continuing coarsening of partitions in the same way, using (\ref{7.3}) for $\nu=1,...,n-1$ and applying the (neo-classical) binomial
inequality from Lemma 2.2.2 in \cite{Lyo} and Theorem 3.1.1 in \cite{LQ} we arrive in view of (\ref{7.5}) at the estimate
\[
\|\bbX^{(n)}(s,t)\|\leq\|\sum_{i=1}^{m-1}\bbX^{(n)}(t_i,t_{i+1})\|+\frac {(\phi(s,t))^{n/p}}{\be(n/p)!}.
\]
Finally, replacing $\cD$ by a sequence of partitions $\cD_n$ of $[s,t]$ with diam$\cD_n\to 0$ and taking into account the assumption (\ref{7.4})
we arrive at (\ref{7.3}) for $\nu=n$ completing the induction step and the whole proof of the proposition.
\end{proof}

We will apply Proposition \ref{prop7.1} to $\bbX^{(\nu)}(s,t)=\bbS_N^{(\nu)}(s,t)$ taking into account that $\bbS_N^{(\nu)}(s,t)=0$ if $0\leq t-s<1/N$
and $\nu\geq 2$ so that (\ref{7.4}) is satisfied. It is easy to see that the Chen (multiplicativity) relations hold true for $\bbS_N^{(\nu)},\,\nu=1,2,...$
as was mentioned in Section \ref{subsec7.1}. As for the (control) superadditive function $\phi$ we set
\begin{equation}\label{7.6}
\phi(s,t)=\phi_N(s,t)=\be^p(\| S_N\|^p_{p,[s,t]}+\|\bbS_N\|^{p/2}_{p/2,[s,t]})
\end{equation}
for $t>s$ and $\phi(s,s)=0$. The superadditivity of this $\phi$ is clear from the definition of the variational norm. Thus, Proposition \ref{prop7.1}
yields that for all $0\leq s\leq t\leq T$,
\begin{equation}\label{7.7}
\|\bbS_N^{(\nu)}(s,t)\|\leq\frac {(\phi_N(s,t))^{\nu/p}}{\be(\nu/p)!},\,\nu=1,2,....
\end{equation}
Recall, that here and in what follows $\|\cdot\|$ is the norm defined by (\ref{2.8+}) in the corresponding tensor product space
 while $\|\cdot\|_{p,[s,t]}$ is the variational norm.

Let $\al\in(0,\frac 12-\frac 1p)$ be a small number. It follows from Lemma \ref{lem3.2} together with Proposition \ref{hoelder2}
that there exist random variables  $C_\al,\,\bbC_\al>0$ such that
\begin{equation}\label{7.8}
\| S_N(s,t)\|\leq C_\al|t-s|^{\frac 12-\al}\,\,\mbox{when}\,\, t-s\geq 1/N,\,\|\bbS_N(s,t)\|\leq\bbC_\al|t-s|^{1-2\al}\,\,\mbox{for all}\,\, t\geq s
\end{equation}
and
\begin{equation}\label{7.9}
EC_\al^{4M}<\infty\quad\mbox{and}\quad E\bbC_\al^{2M}<\infty.
\end{equation}
Observe also that the first inequality in (\ref{7.8}) may not hold true only if $0<t-s<1/N$ and $Ns\leq k<Nt$ for some integer $k$ in which case
$S_N(s,t)=N^{-1/2}\xi(k)$. Hence, for any partition $\cD=\{ s=t_0<t_1<t_1<...<t_m=t\}$ there may exist no more than $N(t-s)+1$ intervals $[t_i,t_{i+1}]$
such that $t_{i+1}-t_i<1/N$ and $S_N(t_i,t_{i+1})=N^{-1/2}\xi(k_i)$ for some integer $k_i$, so that $\| S_N(t_i,t_{i+1})\|=N^{-1/2}|\xi(k_i)|$ in which
case $\| S_N(t_i,t_{i+1})\|$ may not be bounded by $C_\al|t_{i+1}-t_i|^{\frac 12-\al}$. Note also that if $0\leq t-s<1/N$ and there is no integer $k$ such that
$Ns\leq k<Nt$ then $S_N(s,t)=0$, and so the first inequality in (\ref{7.8}) trivially holds true. Taking into account that $\sum_{i=0}^{m-1}(t_{i+1}-t_i)
=t-s$ and that $p(\frac 12-\al)>1$ we conclude that
\begin{eqnarray*}
&\sum_{i=0}^{m-1}\| S_N(t_i,t_{i+1})\|^p\leq C^p_\al\sum_{i\in\Gam_1(s,t)}|t_{i+1}-t_i|^{\frac p2-p\al}\\
&+N^{-p/2}\sum_{i\in\Gam_2(s,t)}|\xi(k_i)|^p\leq C_\al^p|t-s|^{\frac p2-p\al}+N^{-p/2}\Psi_{\Gam_2(s,t)}
\end{eqnarray*}
where $\Gam_1(s,t)=\{ i:\, t_{i+1}-t_i\geq 1/N\}$, $\Gam_2(s,t)=\{ i:\, t_{i+1}-t_i<1/N\},\,$ there exists an integer $k_i$ such that $Nt_i\leq k_i<Nt_{i+1}$,
the cardinality of $\Gam_2(s,t)$ does not exceed $N(t-s)+1$ and $\Psi_{\Gam_2(s,t)}=\sum_{i\in\Gam_2(s,t)}|\xi(k_i)|^p$. Hence,
\begin{equation}\label{7.10}
\| S_N\|^p_{p,[s,t]}\leq C^p_\al|t-s|^{p(\frac 12-\al)}+N^{-\frac p2}\Psi_{\Gam_2(s,t)}.
\end{equation}
Since the second inequality in (\ref{7.8}) always holds true we obtain that
\[
\sum_{i=1}^{m-1}\|\bbS_N(t_i,t_{i+1})\|^{p/2}\leq\bbC_\al^{p/2}\sum_{i=0}^{m-1}|t_{i+1}-t_i|^{p(\frac 12-\al)}\leq\bbC_\al^{p/2}|t-s|^{p(\frac 12-\al)},
\]
and so
\begin{equation}\label{7.11}
\|\bbS_N\|^{p/2}_{p/2,[s,t]}\leq\bbC_\al^{p/2}|t-s|^{p(\frac 12-\al)}.
\end{equation}
Finally, from (\ref{7.10}) and (\ref{7.11}),
\begin{equation}\label{7.12}
\phi_N(s,t)\leq\be^p((C_\al^p+\bbC_\al^{p/2})|t-s|^{p(\frac 12-\al)}+N^{-p/2}\Psi_{\Gam_2(s,t)}).
\end{equation}
Note also that by (\ref{7.7}),
\begin{equation}\label{7.13}
\|\bbS_N^{(\nu)}\|^{p/\nu}_{p/\nu,[0,T]}=\sup_\cD\sum_{i=0}^{m-1}\|\bbS_N^{(\nu)}(t_i,t_{i+1})\|^{p/\nu}\leq (\be(\nu/p)!)^{-p/\nu}\phi_N(0,T)
\end{equation}
where the supremum is taken over all partitions $\cD=\{ 0=t_0<t_1<...<t_m=T\}$ of $[0,T]$.

\subsection{Completing proofs of main results}\label{subsec7.3}

Let $\cD=\cD_{0T}=\{ 0=t_0<t_1<...<t_m=T\}$ be a finite partition of the interval $[0,T]$ and $\cD_{t_it_{i+1}}=\{ t_i=\tau_{i0}<\tau_{i1}<...
<\tau_{im_i}=t_{i+1}\}$ be partitions of $[t_i,t_{i+1}],\, i=0,1,...,m-1$ such that $N^{-\al}\leq\tau_{i,j+1}-\tau_{ij}<2N^{-\al}$ if $t_{i+1}
-t_i\geq 2N^{-\al}$ and if $t_{i+1}-t_i<2N^{-\al}$ then we take $m_i=1$ in which case $\cD_{t_it_{i+1}}$ consists of only one interval $[t_i,t_{i+1}]$.
Here, again, $0<\al<\frac 12-\frac 1p$ is a small number. If $m_i>1$ then by the Chen relations (\ref{7.1}) and (\ref{7.2}),
\begin{eqnarray*}
&\bbW_N^{(n)}(t_i,\tau_{i,r+1})=\bbW_N^{(n)}(t_i,\tau_{i,r})+\bbW_N^{(n)}(\tau_{ir},\tau_{i,r+1})\\
&+\sum_{k=1}^{n-1}\bbW^{(k)}(t_i,\tau_{ir})\otimes\bbW^{(n-k)}(\tau_{ir},\tau_{i,r+1})
\end{eqnarray*}
and
\begin{eqnarray*}
&\bbS_N^{(n)}(t_i,\tau_{i,r+1})=\bbS_N^{(n)}(t_i,\tau_{i,r})+\bbS_N^{(n)}(\tau_{ir},\tau_{i,r+1})\\
&+\sum_{k=1}^{n-1}\bbS^{(k)}(t_i,\tau_{ir})\otimes\bbS^{(n-k)}(\tau_{ir},\tau_{i,r+1}).
\end{eqnarray*}
Summing these in $r=0,1,...,m_i-1$ we obtain
\begin{eqnarray}\label{7.14}
&\bbW_N^{(n)}(t_i,t_{i+1})=\sum_{r=0}^{m_i-1}\bbW_N^{(n)}(\tau_{ir},\tau_{i,r+1})\\
&+\sum_{r=1}^{m_i-1}\sum_{k=1}^{n-1}\bbW_N^{(k)}(t_i,\tau_{ir})\otimes\bbW_N^{(n-k)}(\tau_{ir},\tau_{i,r+1})\nonumber
\end{eqnarray}
and
\begin{eqnarray}\label{7.15}
&\bbS_N^{(n)}(t_i,t_{i+1})=\sum_{r=0}^{m_i-1}\bbS_N^{(n)}(\tau_{ir},\tau_{i,r+1})\\
&+\sum_{r=1}^{m_i-1}\sum_{k=1}^{n-1}\bbS_N^{(k)}(t_i,\tau_{ir})\otimes\bbS_N^{(n-k)}(\tau_{ir},\tau_{i,r+1}).\nonumber
\end{eqnarray}
Then for $n\geq 3$,
\begin{equation}\label{7.16}
\sum_{i=0}^{m-1}\|\bbS_N^{(n)}(t_i,t_{i+1})-\bbW_N^{(n)}(t_i,t_{i+1})\|^{p/n}\leq\cI_\cD^{(1)}+\cI_\cD^{(2)}+\cI_\cD^{(3)}
\end{equation}
where
\[
\cI_\cD^{(1)}=\sum_{i=0}^{m-1}\|\sum_{j=0}^{m_i-1}\bbW_N^{(n)}(\tau_{ij},\tau_{i,j+1})\|^{p/n},\,\,
\cI_\cD^{(2)}=\sum_{i=0}^{m-1}\|\sum_{j=0}^{m_i-1}\bbS_N^{(n)}(\tau_{ij},\tau_{i,j+1})\|^{p/n}
\]
and
\begin{eqnarray*}
&\cI_\cD^{(3)}=\sum_{0\leq i<m,\, m_i>1}\|\sum_{j=1}^{m_i-1}\sum_{k=1}^{n-1}(\bbS_N^{(k)}(t_i,\tau_{ij})\otimes\bbS_N^{(n-k)}(\tau_{ij},\tau_{i,j+1})\\
&-\bbW_N^{(k)}(t_i,\tau_{ij})\otimes\bbW_N^{(n-k)}(\tau_{ij},\tau_{i,j+1})\|^{p/n}.
\end{eqnarray*}
Taking into account that $p/n<1$, $N^{-\al}\leq\tau_{i,j+1}-\tau_{ij}<2N^{-\al}$ for all $i,j$, $\frac p2(1-\al)>1$ and $\sum_{i=0}^{m-1}\sum_{j=0}^{m_i-1}
|\tau_{i,j+1}-\tau_{ij}|=T$ we have (similarly to Section \ref{subsec4.3}),
\begin{eqnarray}\label{7.17}
&\cI_\cD^{(1)}\leq\sum_{i=0}^{m-1}\sum_{j=0}^{m_i-1}\|\bbW_N^{(n)}(\tau_{ij},\tau_{i,j+1})\|^{p/n}\\
&\leq(\sum_{i=0}^{m-1}\sum_{j=0}^{m_i-1}|\tau_{i,j+1}-\tau_{ij}|^{\frac p2(1-\al)})\sup_{0\leq u<v\leq T}\frac {\|\bbW_N^{(n)}(u,v)\|^{p/n}}{|v-u|^{\frac p2(1-\al)}}\nonumber\\
&\leq T2^{\frac p2(1-\al)}N^{-\al(\frac p2(1-\al)-1)}\sup_{0\leq u<v\leq T}\frac {\|\bbW_N^{(n)}(u,v)\|^{p/n}}{|v-u|^{\frac p2(1-\al)}}.\nonumber
\end{eqnarray}
By (\ref{7.7}) and (\ref{7.12}),
\begin{eqnarray}\label{7.18}
&\cI_\cD^{(2)}\leq\sum_{i=0}^{m-1}\sum_{j=0}^{m_i-1}\|\bbS_N^{(n)}(\tau_{ij},\tau_{i,j+1})\|^{p/n}\\
&\leq (\be(n/p)!)^{-p/n}\sum_{i=0}^{m-1}\sum_{j=0}^{m_i-1}\phi_N(\tau_{ij},\tau_{i,j+1})\nonumber\\
&\leq\be^p(\be(n/p)!)^{-p/n}(T2^{ p(\frac 12-\al)}N^{-\al(p(\frac 12-\al)-1)}(C^p_\al+\bbC_\al^{p/2})+N^{-p/2}\Psi_{\Gam_2(0,T)}).\nonumber
\end{eqnarray}
Now, assuming that $n\geq 3$ and applying twice the H\" older inequality for sums of products we obtain,
\begin{eqnarray}\label{7.19}
&\cI_\cD^{(3)}\leq\sum_{k=1}^{n-1}\sum_{0\leq i<m,\, m_i>1}\sum_{j=1}^{m_i-1}\big( \|\bbS_N^{(k)}(t_i,\tau_{ij})\|^{p/n}\\
&\times\|\bbS_N^{(n-k)}(\tau_{ij},\tau_{i,j+1})-\bbW_N^{(n-k)}(\tau_{ij},\tau_{i,j+1})\|^{\frac pn}+\|\bbS_N^{(k)}(t_i,\tau_{ij})-\bbW_N^{(k)}(t_i,\tau_{ij})\|^{p/n}\nonumber\\
&\times\|\bbW_N^{(n-k)}(\tau_{ij},\tau_{i,j+1})\|^{\frac pn}\big)\leq\sum_{k=1}^{n-1}\sum_{0\leq i<m,\, m_i>1}\big((\sum_{j=1}^{m_i-1}\|\bbS_N^{(k)}(t_i,\tau_{ij})\|^{p/k})^{k/n}\nonumber\\
&\times(\sum_{j=1}^{m_i-1}\|\bbS_N^{(n-k)}(\tau_{ij},\tau_{i,j+1})-\bbW_N^{(n-k)}(\tau_{ij},\tau_{i,j+1})\|^{\frac p{n-k}})^{\frac {n-k}n}\nonumber\\
&+(\sum_{j=1}^{m_i-1}\|\bbS_N^{(k)}(t_i,\tau_{ij})-\bbW_N^{(k)}(t_i,\tau_{ij})\|^{p/k})^{k/n}\nonumber\\
&\times(\sum_{j=1}^{m_i-1}\|\bbW_N^{(n-k)}(\tau_{ij},\tau_{i,j+1})\|^{\frac p{n-k}})^{\frac {n-k}n}\big)\leq(\max_{0\leq i<m,m_i>1}m_i)\nonumber\\
&\times\sum_{k=1}^{n-1}\sum_{0\leq i<m,m_i>1}(\|\bbS_N^{(k)}\|^{p/n}_{p/k,[t_i,t_{i+1}]}\|\bbS_N^{(n-k)}-\bbW_N^{(n-k)}\|^{p/n}_{\frac p{n-k},[t_i,t_{i+1}]}\nonumber\\
&+\|\bbS_N^{(k)}-\bbW_N^{(k)}\|^{p/n}_{p/k,[t_i,t_{i+1}]}\|\bbW_N^{(n-k)}\|^{p/n}_{\frac p{n-k},[t_i,t_{i+1}]})\nonumber\\
&\leq(\max_{0\leq i<m,m_i>1}m_i)\sum_{k=1}^{n-1}\big((\sum_{0\leq i<m,m_i>1}\|\bbS_N^{(k)}\|^{p/k}_{p/k,[t_i,t_{i+1}]})^{k/n}\nonumber\\
&\times(\sum_{0\leq i<m,m_i>1}\|\bbS_N^{(n-k)}-\bbW_N^{(n-k)}\|^{\frac p{n-k}}_{\frac p{n-k},[t_i,t_{i+1}]})^{\frac {n-k}n}\nonumber\\
&+(\sum_{0\leq i<m,m_i>1}\|\bbS_N^{(k)}-\bbW_N^{(k)}\|^{p/k}_{p/k,[t_i,t_{i+1}]})^{k/n}\nonumber\\
&\times(\sum_{0\leq i<m,m_i>1}\|\bbW_N^{(n-k)}\|^{\frac p{n-k}}_{\frac p{n-k},[t_i,t_{i+1}]})^{\frac {n-k}n}\big)\nonumber\\
&\leq TN^\al\sum_{k=1}^{n-1}(\|\bbS_N^{(k)}\|^{p/n}_{p/k,[0,T]}\|\bbS_N^{(n-k)}-\bbW_N^{(n-k)}\|^{p/n}_{\frac p{n-k},[0,T]}\nonumber\\
&+\|\bbS_N^{(k)}-\bbW_N^{(k)}\|^{p/n}_{p/k,[0,T]}\|\bbW_N^{(n-k)}\|^{p/n}_{\frac p{n-k},[0,T]})\nonumber
\end{eqnarray}
where we used that when $m_i>1$ then $\tau_{i,j+1}-\tau_{ij}\geq N^{-\al}$ for each $j=0,1,...,m_i$, and so $m_i\leq (t_{i+1}-t_i)N^\al\leq TN^\al$.

Next, observe that for any integers $\nu,D\geq 1$ and $0\leq s\leq t\leq T$,
\begin{equation}\label{7.20}
E\|\bbW_N^{(\nu)}(s,t)\|^{2D}\leq C_{D,\nu}|t-s|^{D\nu}
\end{equation}
for some $C_{D}>0$ which do not depend on $N,s,t$. Indeed, proceeding by induction we note first that for $\nu=1$ we have
$\bbW_N^{(1)}(s,t)=W_N(s,t)$. Since each coordinate process $W_N^i$ of $W_N$ is the one-dimensional Brownian motion with the variance $\Gam^{ii}$ at time 1, we obtain that
\[
E|W_N(s,t)|^{2D}\leq d^{2D-1}\sum_{1\leq i\leq d}E|W_N^{i}(t-s)|^{2D}\leq d^{2D-1}(t-s)^D\sum_{1\leq i\leq d}|\Gam^{ii}|^{d}\prod_{k=1}^D(2k-1),
\]
and so (7.20) holds true in this case. For $\nu=2$ we have by (\ref{2.7}) and (\ref{2.8+}),
\begin{eqnarray*}
&E\|\bbW_N^{(2)}(s,t)\|^{2D}\leq E\big(\sum_{1\leq i,j\leq d}|\int (W_N^i(v)-W^i_N(s))dW^j_N(v)+(t-s)\Gam^{ij}|\big)^{2D}\\
&\leq (2d^2)^{2D-1}\sum_{1\leq i,j\leq d}\big(E|\int (W_N^i(v)-W^i_N(s))dW^j_N(v)|^{2D}+(t-s)^{2D}|\Gam^{ij}|^{2D}\big).
\end{eqnarray*}
By the moment estimates of stochastic integrals (see, for instance, \cite{Mao}, Section 1.7),
\begin{eqnarray*}
&E|\int (W_N^i(v)-W^i_N(s))dW^j_N(v)|^{2D}\\
&\leq D^D(2D-1)^D(t-s)^{D-1}|\Gam^{jj}|^D\int_s^tE|W_N^i(v)-W_N^i(s)|^{2D}dv=C(t-s)^{2D},
\end{eqnarray*}
where $C=D^D(2D-1)^D|\Gam^{ii}|^D|\Gam^{jj}|^D\prod_{k=1}^D(2k-1)$, and (\ref{7.20}) follows.
 Now suppose that (\ref{7.20}) is valid for $\nu=1,2,...,n-1$ and consider $\bbW_N^{(n)}(s,t)$ given by the recurrence formula (\ref{rec2}).
 By the standard moment estimates for stochastic integrals (see, for instance, \cite{Mao}, Section 1.7), the H\" older inequality (in the form 
 $(\int_s^tg(v)dv)^{2D}\leq (t-s)^{2D-1}\int_s^t|g(v)|^{2D}dv$) and the induction hypothesis we obtain
\begin{eqnarray}\label{7.21}
&E\|\bbW_N^{(n)}(s,t)\|^{2D}\leq E\big(\sum_{1\leq i_1,...,i_n\leq d}|\int_s^t\bbW_N^{i_1,...,i_{n-1}}(s,v)dW_N^{i_n}(v)\\
&+\int_s^t\bbW_N^{i_1,...,i_{n-2}}(s,v)\Gam^{i_{n-1}i_n}dv\big)^D\nonumber\\
&\leq(2d^n)^{2D-1}\sum_{1\leq i_1,...,i_n\leq d} \big(E|\int_s^t\bbW_N^{i_1,...,i_{n-1}}(s,v)dW_N^{i_n}(v)|^D\nonumber\\
&+|\Gam^{i_{n-1}i_n}|^{2D}E|\int_s^t\bbW_N^{i_1,...,i_{n-2}}(s,v)dv|^D\big)\nonumber\\
&\leq (2d^n)^{2D-1}D^D(2D-1)^Dd^{2D-1}(\sum_{i=1}^d|\Gam^{ii})|^D)\nonumber\\
&\times\sum_{1\leq i_1,...,i_{n-1}\leq d}\big(\int_s^tE|\bbW_N^{i_1,...,i_{n-1}}(s,v)|^{2D}dv\nonumber\\
&+|\Gam^{i_{n-1}i_n}|^{2D}(t-s)^{D-1)}\int_s^tE|\bbW_N^{i_1,...,i_{n-2}}(s,v)|^{2D}dv\nonumber\\
&\leq C(t-s)^{D-1}\int_s^t(v-s)^{D(n-1)}dv+\tilde C(t-s)^{2D-1}\int_s^t(v-s)^{D(n-2)}dv\nonumber\\
&\leq (\frac C{D(n-1)+1}+\frac {\tilde C}{D(n-2)+1})(t-s)^{Dn},
\nonumber\end{eqnarray}
where $C,\tilde C>0$ are some constants. This completes the induction step
and shows that (\ref{7.20}) holds true for any $\nu\geq 1$. Now (\ref{7.20}) taken with $D=2M$ together with
Theorem \ref{hoelder} considered with $\be=1/2$ (which has nothing to do with $\be$ in Proposition \ref{prop7.1}
and in estimates of the present section) yields that
\begin{equation}\label{7.22}
E\sup_{0<u<v\leq T}\frac {\|\bbW_N^{(\nu)}(u,v)\|^{4M/\nu}}{|v-u|^{2M(1-\al)}}\leq\bbC_{\bbW,\al}(M,\nu)<\infty
\end{equation}
where $\bbC_{\bbW,\al}(M,\nu)>0$ does not depend on $N$ and $\al>0$ is arbitrarily small.

Next, let $0=t_0<t_1<...<t_m=T$. Then for $0<\al<\frac 12-\frac 1p$,
 \begin{eqnarray*}
& \sum_{r=0}^{m-1}\|\bbW_N^{(\nu)}(t_r,t_{r+1}\|^{p/\nu}\leq(\sum_{r=0}^{m-1}(t_{r+1}-t_r)^{\frac p2(1-\al)})\\
&\times\sup_{0\leq u<v\leq T}\big(\frac {\|\bbW_N^{(\nu)}(u,v)\|}{|v-u|^{\frac \nu 2(1-\al)}}\big)^{p/\nu} 
\leq T^{\frac p2(1-\al)}\sup_{0\leq u<v\leq T}\big(\frac {\|\bbW_N^{(\nu)}(u,v)\|}{|v-u|^{\frac \nu 2(1-\al)}}\big)^{p/\nu}.
\end{eqnarray*}
Taking the supremum over all such partitions of $[0,T]$ and then $\frac {4M}p$-th moment we derive from (\ref{7.22}) that
\begin{equation}\label{7.23}
E\|\bbW_N^{(\nu)}\|^{4M/\nu}_{p/\nu,[0,T]}\leq T^{2M}\bbC_{\bbW,\al}(M,\nu)<\infty\,\,\mbox{for}\,\,\nu=1,2,...,4M.
\end{equation}


By (\ref{7.12}) and (\ref{7.13}),
\begin{eqnarray}\label{7.24}
&E\|\bbS_N^{(\nu)}\|^{4M/\nu}_{p/\nu,[0,T]}\leq(\be(\nu/p)!)^{-4M/\nu}E(\phi_N(0,T))^{4M/p}\\
&\leq(\be(\nu/p)!)^{-4M/\nu}\be^{4M}2^{4M/p}\big((C^p_\al+\bbC_\al^{p/2})^{4M/p}T^{4M(\frac 12-\al)}+N^{-2M}E\Psi^{4M/p}_{\Gam_2(0,T)}\big).
\nonumber\end{eqnarray}
The last term above we estimate by
\begin{eqnarray}\label{7.24+}
&N^{-2M}E\Psi^{4M/p}_{\Gam_2(0,T)}\leq N^{-2M}E(\sum_{0\leq k<[NT]}|\xi(k)|^p)^{4M/p}\\
&\leq N^{-2M}(NT)^{\frac {4M}p-1}\sum_{0\leq k<[NT]}E|\xi(k)|^{4M}\leq T^{4M/p}N^{-2M(1-\frac 2p)}E|\xi(0)|^{4M}.\nonumber
\end{eqnarray}

Taking the supremum over all partitions $\cD$ of the interval $[0,T]$ we obtain from (\ref{7.17})--(\ref{7.19}) and (\ref{7.22})--(\ref{7.24+}) that
there exist constants $\bbC_{\bbW,\al}(M,T)$ and $\bbC_{\bbS,\al}(M,T)$ which do not depend on $N$ and such that
\begin{equation}\label{7.25}
E\sup_\cD(\cI_\cD^{(1)})^{4M/p}\leq \bbC_{\bbW,\al}(M,T)N^{-2M\al(1-\al-\frac 2p)}\quad\mbox{and}
\end{equation}
\begin{equation}\label{7.26}
E\sup_\cD(\cI_\cD^{(2)})^{4M/p}\leq \bbC_{\bbS,\al}(M,T)N^{-2M\al(1-2\al-\frac 2p)}
\end{equation}
where we took into account Theorem \ref{hoelder}. Using, in addition, the H\" older inequality for expectations of products we obtain
from (\ref{7.23})--(\ref{7.24+}) that
\begin{eqnarray}\label{7.27}
&E\sup_\cD(\cI_\cD^{(3)})^{4M/p}\leq T^{4M/p}N^{4M\al/p}(2n)^{\frac {4M}p}\\
&\times\sum_{k=1}^{n-1}\big((E\|\bbS_N^{(k)}\|^{4M/k}_{p/k,[0,T]})^{k/n}(E\|\bbS_N^{(n-k)}-
\bbW_N^{(n-k)}\|^{\frac {4M}{n-k}}_{\frac p{n-k},[0,T]})^{(n-k)/n}\nonumber\\
&+(E\|\bbS_N^{(k)}-\bbW_N^{(k)}\|^{4M/k}_{p/k,[0,T]})^{k/n}(E\|\bbW_N^{(n-k)}\|^{\frac {4M}{n-k}}_{\frac p{n-k},[0,T]})^{(n-k)/n}\big)\nonumber\\
&\leq\bbC_{\bbS,\bbW,\al}(M,T)N^{4M\al/p}\sum_{k=1}^{n-1}(E\|\bbS_N^{(k)}-\bbW_N^{(k)}\|^{4M/k}_{p/k,[0,T]})^{k/n},\nonumber
\end{eqnarray}
where $\bbC_{\bbS,\bbW,\al}(M,T)>0$ does not depend on $N$, and so by (\ref{7.16}), (\ref{7.25})--(\ref{7.27}),
\begin{eqnarray}\label{7.28}
&E\|\bbS_N^{(n)}-\bbW_N^{(n)}\|^{4M/n}_{p/n,[0,T]}\leq 3^{4M/p}(\bbC_{\bbW,\al}(M,T)N^{-2M\al(1-\al-\frac 2p)}\nonumber\\
&+\bbC_{\bbS,\al}(M,T)N^{-2M\al(1-2\al-\frac 2p)}\nonumber\\
&+\bbC_{\bbS,\bbW,\al}(M,T)N^{4M\al/p}\sum_{k=1}^{n-1}(E\|\bbS_N^{(k)}-\bbW_N^{(k)}\|^{4M/k}_{p/k,[0,T]})^{k/n}.\nonumber
\end{eqnarray}
Now we estimate $E\|\bbS_N^{(\nu)}-\bbW_N^{(\nu)}\|^{4M/\nu}_{p/\nu,[0,T]}$ by induction. By Theorem \ref{thm2.1} for $\nu=1$ and $\nu=2$, which
is already proved, we have
\begin{equation*}
E\|S_N-W_N\|^{4M}_{p,[0,T]}+E\|\bbS_N-\bbW_N\|^{2M}_{p/2,[0,T]}\leq C_{S,\bbS,W,\bbW}(M)N^{-\del_4}
\end{equation*}
for some $C_{S,\bbS,W,\bbW}(M)>0$ and $\del_4\in(0,1)$ which do not depend on $N$. Suppose that for all $\nu<n,\,\nu\geq 2$ we have
\begin{equation}\label{7.29}
E\|\bbS_N^{(\nu)}-\bbW_N^{(\nu)}\|^{4M/\nu}_{p/\nu,[0,T]}\leq C(M,\nu)N^{-\ka_\nu}
\end{equation}
for $\ka_\nu=\del_4(\frac {p-2}{2p})^\nu\frac {(\nu-1)!}{n^\nu}$ and a constant $C(M,\nu)>0$ which do not depend on $N$.
 Then taking $\al=\frac {n-1}{2Mn}\ka_{n-1}$ and $\del_4\leq 1/2$ we obtain (\ref{7.29}) for $\nu=n$ from (\ref{7.28}) completing the induction
step and the whole proof of Theorem \ref{thm2.1}.

In the continuous time setup of Theorems \ref{thm2.2} and \ref{thm2.3} we can obtain the required estimates in
the same way as above but Proposition \ref{prop7.1} is not needed here and we can apply directly Theorems 2.2.1 from \cite{Lyo}
or Theorems 3.1.2 from \cite{LQ} since now unlike the case of sums (where the corresponding
rough paths are not continuous in time), the normalized iterated integrals $\bbS_N^{(n)},\, n\geq 3$ already serve as Lyons' extensions of
the pair $\bbS^{(1)}_N,\bbS^{(2)}_N$. Indeed, the multiplicativity Chen relation for
$\bbS_N^{(n)},\, n\geq 1$ is easy to verify directly (see Theorem 2.1.2 in \cite{Lyo}). Then the equality
(\ref{7.5}) holds true. But for our iterated integrals for each $n\geq 2$,
\[
\lim_{\mbox{diam}(\cD_{st})\to 0}\sum_{0\leq r<m}\bbS_N^{(n)}(\tau_r,\tau_{r+1})=0
\]
where $\cD_{st}=\{ s=\tau_0<\tau_1<...<\tau_m=t\}$. Indeed, in our straightforward continuous time setup, by Jensen's inequality,
\[
\|\bbS_N^{(n)}(\tau_r,\tau_{r+1})\|\leq N^{-n/2}(\int_{N\tau_r}^{N\tau_{r+1}}|\xi(u)|du)^n\leq N^{\frac n2-1}|\tau_{r+1}-
\tau_r|^{n-1}\int_{N\tau_r}^{N\tau_{r+1}}|\xi(u)|^ndu,
\]
and so when $n\geq 2$,
\[
\sum_{0\leq r<m}\|\bbS_N^{(n)}(\tau_r,\tau_{r+1})\|\leq N^{\frac n2-1}\max_{0\leq r<m}|\tau_{r+1}-\tau_r|^{n-1}
\int_{Ns}^{Nt}|\xi(u)|^ndu\to 0
\]
as diam$(\cD_{st})\to 0$. In the continuous time suspension setup we have similarly to the above,
\[
\sum_{0\leq r<m}\|\bbS_N^{(n)}(\tau_r,\tau_{r+1})\|\leq N^{\frac n2-1}\bar\tau^{n-1}\max_{0\leq r<m}|\tau_{r+1}-\tau_r|^{n-1}
\int_{N\bar\tau s}^{N\bar\tau t}|\xi(u)|^ndu\to 0
\]
as diam$(\cD_{st})\to 0$. Hence, in both cases,
\[
\bbS_N^{(n)}(s,t)=\lim_{\mbox{diam}(\cD_{st})\to 0}\sum_{r=1}^{m-1}\sum_{k=1}^{n-1}\bbS_N^{(k)}(s,\tau_r)\otimes\bbS_N^{(n-k)}(\tau_r,\tau_{r+1}),
\]
and so $\bbS_N^{(n)}$'s coincide with Lyons' extension constructed in Theorem 2.2.1 from \cite{Lyo} and
in Theorem 3.1.2 from \cite{LQ}. The required estimates for
$\| S_N-W_N\|_{p,[s,t]}$ and $\|\bbS_N-\bbW_N\|_{p/2,[s,t]}$ where obtained in Sections \ref{sec5} and \ref{sec6},
 and so Theorems \ref{thm2.2} and \ref{thm2.3} follow from Theorem 2.2.1 in \cite{Lyo} or Theorem 3.1.2 in \cite{LQ}
 together with the arguments of the present subsection taking into account that in the continuous time case the control function
 $\phi$ given by (\ref{7.6}) can be estimated by (\ref{7.12}) without the last term there which together with the moment estimates
 of $\phi$ follow from Theorem \ref{hoelder} and moment estimates of Sections \ref{sec5} and \ref{sec6}.

 \begin{remark}\label{rem7.2}
 It does not seem possible, even in the continuous time case, to rely on a version of the Lyons continuity theorem
 (Theorem 2.2.2 in \cite{Lyo} and Theorem 3.1.3 in \cite{LQ}) in place of the arguments of the last subsection leading
 to the estimate (\ref{7.29}). Indeed, in order to complete the proof of Theorems \ref{thm2.2} and \ref{thm2.3} relying
 on the above theorems we need that
 \[
 \| S_N\|^p_{p,[s,t]},\,\| W_N\|^p_{p,[s,t]}\leq\be^{-p}\phi_N(s,t)\,\,\mbox{and}\,\,\|\bbS_N\|^{p/2}_{p/2,[s,t]},\,\|\bbW_N\|^{p/2}_{p/2,[s,t]}
 \leq\be^{-p/2}\phi_N(s,t)
 \]
 which does not cause a problem if we take, for instance,
 \[
 \phi_N(s,t)=\be^p(\| S_N\|^p_{p,[s,t]}+\| W_N\|^p_{p,[s,t]}+\|\bbS_N\|^{p/2}_{p/2,[s,t]}+\|\bbW_N\|^{p/2}_{p/2,[s,t]}).
 \]
 Still, in order to use these theorems from \cite{Lyo} and \cite{LQ} for our Theorems \ref{thm2.2} and \ref{thm2.3} we would need also that
 for any $0\leq s<t\leq T$ and some $\al>0$,
 \[
 \| S_N-W_N\|^p_{p.[s,t]}\leq N^{-\al}\phi_N(s,t)\,\,\mbox{and}\,\,\|\bbS_N-\bbW_N\|^{p/2}_{p/2.[s,t]}\leq N^{-\al}\phi_N(s,t).
 \]
 On the other hand, we only proved that
 \[
 \| S_N-W_N\|^p_{p.[0,T]}\leq N^{-\al}\,\,\mbox{and}\,\,\|\bbS_N-\bbW_N\|^{p/2}_{p/2.[s,t]}\leq N^{-\al}
 \]
 which do not imply better estimates for small $t-s$ required above.
 \end{remark}


\end{document}